\newcommand{\bbmu}{\mu}
\def\mf#1{\mathfrak{#1}}
\def\mc#1{\mathcal{#1}}
\def\mb#1{\mathbb{#1}}
\def\tx#1{\textrm{#1}}
\def\tb#1{\textbf{#1}}
\def\R{\mathbb{R}}
\def\C{\mathbb{C}}
\def\Q{\mathbb{Q}}
\def\Z{\mathbb{Z}}
\def\N{\mathbb{N}}
\def\ol#1{\overline{#1}}
\def\hat{\widehat}
\def\rw{\rightarrow}
\def\lw{\leftarrow}
\def\lrw{\longrightarrow}
\def\lw{\leftarrow}
\def\<{\langle}
\def\>{\rangle}
\def\pf{\tb{Proof: }}
\newenvironment{mytitle}
{\begin{center}\large\sc}
{\end{center}}
\newtheorem{thm}{Theorem}[section]
\newtheorem{lem}[thm]{Lemma}
\newtheorem{pro}[thm]{Proposition}
\newtheorem{cor}[thm]{Corollary}
\newtheorem{dfn}[thm]{Definition}
\newtheorem{fct}[thm]{Fact}
\numberwithin{equation}{section}
\begin{document}

\begin{mytitle} Rigid inner forms vs isocrystals \end{mytitle}
\begin{center} Tasho Kaletha \end{center}

\begin{abstract}
	We compare two statements of the refined local Langlands correspondence for connected reductive groups defined over a $p$-adic field -- one involving Kottwitz's set $B(G)$ of isocrystals with additional structure, and one involving the cohomology set $H^1(u \rw W,Z \rw G)$ of \cite{KalRI}. We show that if either statement is valid for all connected reductive groups, then so is the other. We also discuss how the second statement depends on the choice of element of $H^1(u \rw W,Z \rw G)$.
\end{abstract}
{\let\thefootnote\relax\footnotetext{This research is supported in part by NSF grant DMS-1161489.}}

\tableofcontents

\section{Introduction}
The basic form of the local Langlands conjecture predicts a correspondence between Langlands parameters $\varphi$ for a given connected reductive group $G'$ defined over a local field $F$ and finite sets $\Pi_\varphi(G')$ of irreducible admissible representations of the topological group $G'(F)$. Refinements of this conjecture give a description of the elements of $\Pi_\varphi$. When the group $G'$ is not quasi-split, these refinements involve the choice of realization of $G'$ as an inner form of a quasi-split group $G$, as well as further objects of Galois-cohomological nature. More precisely, one fixes a quasi-split group $G$ and an inner twist $\psi : G \rw G'$. Then $\sigma \mapsto \psi^{-1}\sigma(\psi)$ is a 1-cocycle of the absolute Galois group of $F$ with values in the adjoint group of $G$. To state the refined local Langlands conjecture, one needs to fix a lift of this 1-cocycle to a 1-cocycle of a certain modification of the Galois group of $F$ with values in $G$, rather than its adjoint quotient.

One statement of the refined local Langlands conjecture uses Kottwitz's set $B(G)$ of isocrystals with $G$-structure \cite{Kot85}, \cite{Kot97}, \cite{KotBG}, and in particular the subset $B(G)_\tx{bas}$ of basic isocrystals, to provide a lift $[x_\tx{iso}] \in B(G)_\tx{bas}$ of $\psi^{-1}\sigma(\psi)$. This statement will be referred to as $\tx{LLC}_\tx{iso}(\psi,x_\tx{iso})$ in this paper (for the purposes of the introduction, we will be vague about the difference between the 1-cocycle $x_\tx{iso}$ and its cohomology class $[x_\tx{iso}]$). We will denote by $\tx{LLC}_\tx{iso}$ the totality of all statements $\tx{LLC}_\tx{iso}(\psi,[x_\tx{iso}])$ for all possible quasi-split groups $G$, inner twists $\psi : G \rw G'$, and lifts $[x_\tx{iso}] \in B(G)_\tx{bas}$ of $\psi^{-1}\sigma(\psi)$.

The statement $\tx{LLC}_\tx{iso}(\psi,x_\tx{iso})$ is formulated in \cite[\S2.4]{KalIso} for discrete parameters (see also \cite[\S5]{Rap95}), and in \cite[\S1.6.1]{KMSW} for tempered parameters of unitary groups. A general formulation can be found in \cite[\S2.5]{KalSimons}. The set $B(G)$ occurs naturally in the study of Shimura varieties and Rapoport-Zink spaces. A conjecture of Kottwitz \cite[Conjecture 5.1]{Rap95} describes the contribution of cuspidal $L$-packets to the cohomology of Rapoport-Zink spaces in terms of the parameterization given by $\tx{LLC}_\tx{iso}$. This makes $\tx{LLC}_\tx{iso}$ well suited for the study of these geometric objects and conversely hints at the possibility of finding a proof of $\tx{LLC}_\tx{iso}$ by studying Rapoport-Zink spaces and their generalizations. A conjectural program for this was recently announced by Fargues \cite{FarMSRI}, building on his description of vector bundles on the Fargues-Fontaine curve \cite{FFC} and ideas of Scholze. At the same time, this statement of the local Langlands conjecture has the disadvantage of not being available for all connected reductive groups, because the 1-cocycle $\psi^{-1}\sigma(\psi)$ may fail to lift to an element of $B(G)_\tx{bas}$. This is not a problem when the center of $G$ is connected, but it is a significant problem when $G$ is simply connected, for example. A further disadvantage of $\tx{LLC}_\tx{iso}$ is that it is unclear how it relates to Arthur's work on the stabilization of the trace formula for groups which do not satisfy the Hasse principle.

Another statement of the refined local Langlands conjecture uses the cohomology set $H^1(u \rw W,Z(G) \rw G)$ defined in \cite{KalRI} to provide a lift $[x_\tx{rig}]$ of $\psi^{-1}\sigma(\psi)$. This statement, which we will call $\tx{LLC}_\tx{rig}(\psi,x_\tx{rig})$ in this paper, is available for all connected reductive groups (without assumption on the center). There is an explicit connection between it and the stabilization of the Arthur-Selberg trace formula \cite{KalGRI}. It has been furthermore shown \cite[\S5.6]{KalRI} that when the ground field is $\R$ this statement is true and that the set $H^1(u \rw W,Z(G) \rw G)$ is in canonical bijection with the set of equivalence classes of strong real forms of $G$ due to Adams-Barbasch-Vogan \cite{ABV92}. However, it is not clear how $\tx{LLC}_\tx{rig}$ relates to the cohomology of Rapoport-Zink spaces.

Our main goal in this paper is to compare the statements $\tx{LLC}_\tx{iso}$ and $\tx{LLC}_\tx{rig}$, thereby building a bridge between the stable Arthur-Selberg trace formula and the cohomology of Rapoport-Zink spaces, and in particular Fargues' conjectural program. We expect that this bridge will be useful in both ways. In one direction, it will facilitate applications of the trace formula to the study of Shimura varieties and their local analogs. In the other direction, it will transfer potential results of Fargues' program to the setting of the trace formula and also to the setting of arbitrary connected reductive groups without assumption on their center.

The comparison of the two statements is based on a comparison of the cohomology sets $B(G)_\tx{bas}$ and $H^1(u \rw W,Z(G) \rw G)$. The set $B(G)$ was initially defined as a set of Frobenius-twisted conjugacy classes in the group $G$ \cite{Kot85} and was later reinterpreted as the cohomology of a certain Galois gerbe with values in $G$, initially in the case of tori in \cite[\S8]{Kot97}, and then later in general in \cite{KotBG}. The set $H^1(u \rw W,Z(G) \rw G)$ was defined directly using Galois gerbes \cite[\S3]{KalRI}. The two Galois gerbes underlying $B(G)$ and $H^1(u \rw W,Z(G) \rw G)$ are of quite different nature. The one for $B(G)$ is bound by a split pro-torus, while the one for $H^1(u \rw W,Z(G) \rw G)$ is bound by a pro-finite multiplicative group which is far from being split and whose character module encodes the arithmetic of the ground field $F$. For this reason, we did not initially expect that there can be any reasonable comparison between the two. However, it turns out that a certain universal property of the pro-finite multiplicative group that binds the gerbe of \cite{KalRI} is responsible for the existence of an essentially unique homomorphism between the two gerbes. This homomorphism leads in turn to a comparison map $B(G)_\tx{bas} \rw H^1(u \rw W,Z(G) \rw G)$. The comparison map is in general neither injective, nor surjective. For example we have $B(\mb{G}_m)_\tx{bas}=\Z$ and $H^1(u \rw W,\mb{G}_m \rw \mb{G}_m)=0$, while for a 1-dimensional anisotropic torus $S$ we have $B(S)_\tx{bas}=H^1(\Gamma,S)=\Z/2\Z$ and $H^1(u \rw W,S \rw S)=\Q/2\Z$. In general, there is a simple description of the comparison map $B(G)_\tx{bas} \rw H^1(u \rw W,Z(G) \rw G)$ in terms of generalized Tate-Nakayama duality. This description plays a central role in the comparison between $\tx{LLC}_\tx{iso}$ and $\tx{LLC}_\tx{rig}$.

We will now describe the structure of this paper. The reader who wishes a more general introduction to the different statements of the refined local Langlands conjecture and the problems presented by non-quasi-split groups might find the survey \cite{KalSimons} useful. The comparison of the cohomology sets $B(G)_\tx{bas}$ and $H^1(u \rw W,Z(G) \rw G)$ is done in Section \ref{sec:compgerb}. In Section \ref{sec:compllc} we briefly recall the statements $\tx{LLC}_\tx{iso}(\psi,x_\tx{iso})$ and $\tx{LLC}_\tx{rig}(\psi,x_\tx{rig})$ for a fixed inner twist $\psi : G \rw G'$ of a quasi-split connected reductive group $G$. They depend on the choice of a lifts $[x_\tx{iso}] \in B(G)_\tx{bas}$ and $[x_\tx{rig}] \in H^1(u \rw W,Z(G) \rw G)$ of the class of $\psi^{-1}\sigma(\psi)$. The lift $[x_\tx{iso}]$ will not always exist, but in Section \ref{sec:compllc} we make the assumption that it does. We then give a comparison between $\tx{LLC}_\tx{iso}(\psi,x_\tx{iso})$ and $\tx{LLC}_\tx{rig}(\psi,x_\tx{rig})$. The backbone of this comparison is the map $B(G)_\tx{bas} \rw H^1(u \rw W,Z(G) \rw G)$ studied in \ref{sec:compgerb}. The comparison is given by an explicit formula and can be calculated explicitly for any given example. A consequence of this comparison is that the statements $\tx{LLC}_\tx{iso}(\psi,x_\tx{iso})$ and $\tx{LLC}_\tx{rig}(\psi,x_\tx{rig})$ are equivalent.

In particular, once $\tx{LLC}_\tx{rig}(\psi,x_\tx{rig})$ is proved for all $\psi$ and all $[x_\tx{rig}]$, it implies $\tx{LLC}_\tx{iso}(\psi,x_\tx{iso})$ for all $\psi$ and all $x_\tx{iso}$. This establishes the implication $\tx{LLC}_\tx{rig} \Rightarrow \tx{LLC}_\tx{iso}$. We also want to obtain the converse implication, but the fact that $\psi^{-1}\sigma(\psi)$ doesn't always lift to $B(G)_\tx{bas}$ necessitates further work, which is done in Sections \ref{sec:redcent} and \ref{sec:changrig}.

We alert the reader familiar with the statement $\tx{LLC}_\tx{iso}$ from other references that in this paper we are using a slight modification of this statement. This modification is only visible for groups with disconnected center and for non-discrete tempered parameters. In particular, the statement of Kottwitz's conjecture \cite[Conjecture 5.1]{Rap95} or the results of \cite{KMSW} are not affected.

In Section \ref{sec:redcent} we deal with the problem that the comparison of Section \ref{sec:compllc} was done under the assumption that $\psi^{-1}\sigma(\psi)$ lifts to $B(G)_\tx{bas}$. This assumption cannot be removed and the way we deal with it is necessarily roundabout. Namely, given a connected reductive group $G$ we introduce a procedure that embeds $G$ into a connected reductive group $G_z$ with connected center and comparable representation theory and endoscopy. The idea for this procedure is due to Kottwitz and was communicated verbally to the author some years ago. This procedure, which we call $z$-embedding, is formalized and generalized in Subsection \ref{sub:zemb}, where we also study its implications to representations, endoscopy, and inner twistings. In particular, to any inner twist $\psi : G \rw G'$ there is an associated inner twist $\psi_z : G_z \rw G'_z$ of $z$-embeddings. The natural map $H^1(u \rw W,Z(G) \rw G) \rw H^1(u \rw W,Z(G_z) \rw G_z)$ is bijective. Let $[x_\tx{rig}] \in H^1(u \rw W,Z(G) \rw G)$ lift the class of $\psi^{-1}\sigma(\psi)=\psi_z^{-1}\sigma(\psi_z)$. We show that the statements $\tx{LLC}_\tx{rig}(\psi,x_\tx{rig})$ and $\tx{LLC}_\tx{rig}(\psi_z,x_\tx{rig})$ are equivalent.

A consequence of the results of Sections \ref{sec:compllc} and \ref{sec:redcent} is the following. Let $[x_\tx{iso}] \in B(G_z)_\tx{bas}$ be a lift of $\psi^{-1}\sigma(\psi)=\psi_z^{-1}\sigma(\psi_z)$. It exists since $Z(G_z)$ is connected. Let $[x_\tx{rig}] \in H^1(u \rw W,Z(G_z) \rw G_z) = H^1(u \rw W,Z(G) \rw G)$ be its image under the comparison map $B(G_z)_\tx{bas} \rw H^1(u \rw W,Z(G_z) \rw G_z)$. Then the statements $\tx{LLC}_\tx{iso}(\psi_z,x_\tx{iso})$ and $\tx{LLC}_\tx{rig}(\psi,x_\tx{rig})$ are equivalent. In other words, once the statement $\tx{LLC}_\tx{iso}(\tilde\psi,\tilde x_\tx{iso})$ is proved for every inner twist $\tilde\psi : \tilde G \rw \tilde G'$ of a quasi-split connected reductive group $\tilde G$ with connected center, and some $[x_\tx{iso}] \in B(\tilde G)_\tx{bas}$ lifting $\psi^{-1}\sigma(\psi)$, then this implies the validity of the statement $\tx{LLC}_\tx{rig}(\psi,x_\tx{rig})$ for every inner twist $\psi : G \rw G'$ of a quasi-split connected reductive group $G$, without assumption on the center, and some $[x_\tx{rig}] \in H^1(u \rw W,Z(G) \rw G)$ lifting $\psi^{-1}\sigma(\psi)$.

In order to complete the proof of the implication $\tx{LLC}_\tx{iso} \Rightarrow \tx{LLC}_\tx{rig}$, we must now show that if $[x_{1,\tx{rig}}],[x_{2,\tx{rig}}] \in H^1(u \rw W,Z(G) \rw G)$ both lift the class of $\psi^{-1}\sigma(\psi)$, then the statements $\tx{LLC}_\tx{rig}(\psi,x_{1,\tx{rig}})$ and $\tx{LLC}_\tx{rig}(\psi,x_{2,\tx{rig}})$ are equivalent. This is done in Section \ref{sec:changrig}. We give an explicit relationship between $\tx{LLC}_\tx{rig}(\psi,x_{1,\tx{rig}})$ and $\tx{LLC}_\tx{rig}(\psi,x_{2,\tx{rig}})$. The experience of \cite{KMSW} has shown that such a relationship is of interest in its own right. For example, it would be useful when one proves $\tx{LLC}_\tx{rig}(\psi,x_{\tx{rig}})$ using the trace formula and the local-global methods of \cite{KalGRI}.

\section{Notation}
Throughout this paper, $F$ will denote a $p$-adic field, i.e. a finite extension of the field $\Q_p$ of $p$-adic numbers. We fix an algebraic closure $\ol{F}$ of $F$ and let $\Gamma$ denote the absolute Galois group of $\ol{F}/F$ and $W_F$ the absolute Weil group. We will write $L_F$ for the Langlands group of $F$, which we can interpret either as the product $W_F \times \tx{SL}_2(\C)$ or $W_F \times \tx{SU}_2(\R)$.

\section{Comparison of the cohomology of two Galois gerbes} \label{sec:compgerb}
The purpose of this section is to construct for any affine algebraic group $G$ a comparison map $B(G)_\tx{bas} \rw H^1(u \rw W,Z(G) \rw G)$, where $B(G)_\tx{bas}$ is the set of isomorphism classes of basic isocrystals with $G$-structure \cite{Kot85}, and $H^1(u \rw W,Z(G) \rw G)$ is a variant of the cohomology set introduced in \cite{KalRI}. We will first review each of these sets from a point of view that is slightly different than their original definition.

\subsection{Review of $B(G)$}
Let $G$ be an affine algebraic group. In \cite{Kot85}, Kottwitz studies the set $B(G)$ of Frobenius-twisted conjugacy classes of elements of $G(L)$, where $L$ is the completion of the maximal unramified extension of $F$. This set can also be described as the set of continuous cohomology classes of $W_F$ with values in $G(\ol{L})$ \cite[\S1.4]{Kot97}, or as the set of continuous cohomology classes of a certain Galois gerbe with values in $G(\ol{F})$ \cite[\S8]{Kot97}, \cite{KotBG}.

In this subsection we will review the set $B(G)$, as well as a certain subset of it called $B(G)_\tx{bas}$. We will give a slightly different construction, again in terms of Galois gerbes, but closer in spirit to the point of view of \cite{KalRI}.

We begin by recalling the pro-torus $\mb{D}=\mb{D}_F$ defined in \cite{KotBG}. Consider the contravariant functor $\Phi$ from the category of all finite Galois extensions of $F$ contained in $\ol{F}$ with morphisms given by $F$-algebra homomorphisms, to the category of algebraic groups, defined to send every extension $E/F$ to the split one-dimensional torus $\mb{G}_m$ and every homomorphism $E \rw K$ to the $[K:E]$-power map. The pro-torus $\mb{D}$ is defined as the limit of $\Phi$. We claim that the group $H^2(\Gamma,\mb{D}(\ol{F}))$ has a distinguished element. To construct it, we will introduce a variation of the construction of $\mb{D}$ that will be useful later as well. Consider the functor $\Phi'$ between the same categories as $\Phi$, but now defined by $\Phi'(E/F)=\mu_{[E:F]}$ and $\Phi'(E \rw K) = (\ )^{[K:E]} : \mu_{[K:F]} \rw \mu_{[E:F]}$. Let $\bbmu$ be the limit of $\Phi'$. If we identify $H^2(\Gamma,\mu_n(\ol{F})) = \Z/n\Z$ via local class field theory, then according to \cite[Corollary 2.7.6]{NSW08}, we have
\[ H^2(\Gamma,\bbmu(\ol{F}))=\varprojlim_n H^2(\Gamma,\mu_n(\ol{F})) = \hat \Z. \]
We have the obvious map $\bbmu \rw \mb{D}$ and the image of $1 \in \hat \Z$ under this map is the distinguished element of $H^2(\Gamma,\mb{D}(\ol{F}))$.

Let
\begin{equation} \label{eq:bgext}
1 \rw \mb{D}(\ol{F}) \rw \mc{E} \rw \Gamma \rw 1
\end{equation}
be an extension corresponding to the distinguished class. The topological group $\mc{E}$ acts on the discrete group $G(\ol{F})$ via the map $\mc{E} \rw \Gamma$ and we consider the cohomology set $H^1(\mc{E},G(\ol{F}))$. The restriction of an element of this set to $\mb{D}$ is a $\Gamma$-invariant $G(\ol{F})$-conjugacy class of continuous group homomorphisms $\mb{D}(\ol{F}) \rw G(\ol{F})$. The set $B(G)$ is defined to be the subset of $H^1(\mc{E},G(\ol{F}))$ consisting of those classes whose restriction to $\mb{D}$ is given by algebraic homomorphisms $\mb{D} \rw G$. The smaller set $B(G)_\tx{bas}$ consists of those elements of $B(G)$ for which the restriction to $\mb{D}(\ol{F})$ consists of homomorphisms taking image in $Z(G)$. Then the $G(\ol{F})$-conjugacy class of these homomorphisms consists of a single element, and this element if $\Gamma$-fixed, i.e. it is an element of $\tx{Hom}_F(\mb{D},Z(G))$.

We note that, while this construction of $B(G)$ and $B(G)_\tx{bas}$ used a specific choice of the extension $\mc{E}$ within the isomorphism class given by the distinguished element of $H^2(\Gamma,\mb{D}(\ol{F}))$, the result is in fact independent of this choice up to a unique isomorphism. It is clear that if $\mc{E'}$ is another extension in the same isomorphism class and if we fix an isomorphism $f : \mc{E'} \rw \mc{E}$, then composing 1-cocycles with $f$ provides a bijection $H^1(\mc{E},G(\ol{F})) \rw H^1(\mc{E'},G(\ol{F}))$ and this bijection identifies the corresponding versions of $B(G)$ and $B(G)_\tx{bas}$. We claim now that this bijection does not depend on the choice of $f$. A second such isomorphism has the form $f'(e)=f(e) \cdot x(\sigma_e)$, where $x \in Z^1(\Gamma,\mb{D}(\ol{F}))$ and $\sigma_e \in \Gamma$ is the image of $e$. Then $z(f'(e))=z(f(e)\cdot x(\sigma_e)) = z(f(e)) \cdot z(x(\sigma_e))$. For any $z \in Z^1(\mc{E},G(\ol{F}))$, the restriction $z|_{\mb{D}}$ factors through the projection $\mb{D} \rw \Phi(E/F)=\mb{G}_m$ for a suitable finite extension $E/F$. By Hilbert 90 the composition of $x$ with the projection $\mb{D} \rw \mb{G}_m$ is a co-boundary, and hence $e \mapsto z(x(\sigma_e))$ is itself a co-boundary.

Now let $G=S$ be a torus. Then we trivially have $B(S)=B(S)_\tx{bas}$. The restriction of an element of $B(S)$ to the protorus $\mb{D}$ is an element of $\tx{Hom}_F(\mb{D},S)=\tx{Hom}(X^*(S),\Q)^\Gamma=[X_*(S) \otimes \Q]^\Gamma$. Thus restriction provides a map
\begin{equation} \label{eq:bgnewt}
B(S) \rw [X_*(S) \otimes \Q]^\Gamma
\end{equation}
which is sometimes called the Newton map. The kernel of the Newton map is equal to the image of the inflation $H^1(\Gamma,S) \rw B(S)$. Furthermore, Kottwitz constructs a functorial isomorphism
\begin{equation} \label{eq:bgisos}
X_*(S)_\Gamma \rw B(S).
\end{equation}
The composition of this isomorphism with the Newton map is given by
\[ N^\diamond : X_*(S)_\Gamma \rw [X^*(S)\otimes\Q]^\Gamma,\qquad y \mapsto [E:F]^{-1}N_{E/F}(y), \]
where $E/F$ is any finite Galois extension that splits $S$. Altogether we obtain the commutative diagram with exact rows

\begin{equation} \label{eq:bgdiag}
\xymatrix{
	0\ar[r]&H^1(\Gamma,S)\ar[r]&B(S)\ar[r]&\tx{Hom}_F(\mb{D},S)^\Gamma\\
	0\ar[r]&X_*(S)_{\Gamma,\tx{tor}}\ar[r]\ar[u]^{\tx{TN}}&X_*(S)_\Gamma\ar[r]^-{N^\diamond}\ar[u]^{\eqref{eq:bgisos}}&[X_*(S)\otimes\Q]^\Gamma\ar@{=}[u]\\
}
\end{equation}
where $\tx{TN}$ is the Tate-Nakayama isomorphism.
The isomorphism \eqref{eq:bgisos} can be phrased as a duality statement. If $\hat S = X_*(S) \otimes \C^\times$ denotes the complex torus dual to $S$, then $X^*(\hat S^\Gamma)=X_*(S)_\Gamma$ and thus \eqref{eq:bgisos} becomes the duality pairing
\begin{equation} \label{eq:bgduals}
\hat S^\Gamma \otimes B(S) \rw \C^\times.
\end{equation}
This duality in turn generalizes to the case where $G$ is a connected reductive group. In that case, we have the duality
\begin{equation} \label{eq:bgdualg}
Z(\hat G)^\Gamma \otimes B(G)_\tx{bas} \rw \C^\times.
\end{equation}

\subsection{Review of $H^1(u \rw W,Z \rw G)$}
We will now give a short review of the cohomology set $H^1(u \rw W,Z \rw G)$ introduced in \cite{KalRI} for an affine algebraic group $G$ and a finite central subgroup $Z$, both defined over $F$. Consider the functor $\Phi''$ from the category of finite Galois extensions of $F$ contained in $\ol{F}$ to the category of algebraic groups that sends the object $E/F$ to $u_{E/F} = \tx{Res}_{E/F}\mu_{[E:F]}/\mu_{[E:F]}$ and the morphism $E \rw K$ to the map $u_{K/F} \rw u_{E/F}$ that assigns to $f \in u_{K/F}$ the function $\sigma \mapsto \prod_{\tau \mapsto \sigma} f(\tau)^\frac{m}{n}$. Here we are using the interpretation $\tx{Res}_{E/F}\mu_{[E:F]}(\ol{F}) = \tx{Maps}(\Gamma_{E/F},\mu_{[E:F]}(\ol{F}))$. Let $u$ be the limit of this functor. It is a pro-finite algebraic group. According to \cite[Theorem 3.1]{KalRI} we have $H^1(\Gamma,u)=0$ and $H^2(\Gamma,u)=\hat \Z$. Let
\begin{equation} \label{eq:riext}
1 \rw u(\ol{F}) \rw W \rw \Gamma \rw 1
\end{equation}
be an extension corresponding to the element $-1 \in \hat \Z = H^2(\Gamma,u)$. As in the previous subsection we can consider the cohomology set $H^1(W,G(\ol{F}))$ and we define $H^1(u \rw W,Z \rw G)$ to be the subset of those cohomology classes whose restriction to $u(\ol{F})$ takes image in $Z$. Then this restriction is the composition of the natural projection $u(\ol{F}) \rw u_{E/F}(\ol{F})$ for some $E/F$ with a group homomorphism $u_{E/F}(\ol{F}) \rw Z(\ol{F})$. This composition is automatically an element of $\tx{Hom}_F(u,Z)$. The set $H^1(u \rw W,Z \rw G)$ is independent of the choice of $W$ up to a unique isomorphism due to the vanishing of $H^1(\Gamma,u)$.

For the purposes of comparing with $B(G)_\tx{bas}$ we define for any multiplicative central subgroup $Z \subset G$ defined over $F$ (but not necessarily finite)
\begin{equation} \label{eq:h}
H^1(u \rw W,Z \rw G) = \varinjlim_{Z'} H^1(u \rw W,Z' \rw G)
\end{equation}
where $Z'$ runs over all finite subgroups of $Z$ defined over $F$.

Let $S=G$ be a torus and $Z \subset S$ a finite subgroup. Restricting an element of $H^1(u \rw W,Z \rw S)$ to the group $u$ provides an element of $\tx{Hom}(u,Z)^\Gamma=\tx{Hom}(X^*(Z),\Q/\Z)$. There is a functorial isomorphism
\begin{equation} \label{eq:riisos}
[\bar Y/IY]_\tx{tor} \rw H^1(u \rw W,Z \rw S),
\end{equation}
where $\bar Y=X_*(S/Z)$, $Y=X_*(S)$, and $IY=\<\sigma(y)-y|y \in Y,\sigma \in \Gamma\>$. Taking the limit over all finite subgroups $Z$ of $S$ we obtain a restriction map $H^1(u \rw W,S \rw S) \rw \tx{Hom}(u,S)^\Gamma=\tx{Hom}(X^*(S),\Q/\Z)$ and a functorial isomorphism $[Y \otimes \Q/IY]_\tx{tor} \rw H^1(u \rw W,S \rw S)$. Altogether we obtain a commutative diagram with exact rows
\begin{equation} \label{eq:ridiag}
\xymatrix{
	0\ar[r]&H^1(\Gamma,S)\ar[r]&H^1(u \rw W,S \rw S)\ar[r]&\tx{Hom}_F(u,S)\\
	0\ar[r]&\frac{Y}{IY}[\tx{tor}]\ar[u]^{\tx{TN}}\ar[r]&\frac{Y \otimes \Q}{IY}[\tx{tor}]\ar[u]^-{\eqref{eq:riisos}}\ar[r]&\frac{Y \otimes \Q}{Y}\ar@{=}[u]
}
\end{equation}
Again we can phrase the isomorphism \eqref{eq:riisos} as a duality pairing. Indeed, let $\hat{\bar S} \rw \hat S$ be the universal cover of $\hat S$, that is, the projective limit of all tori that are finite covers of $S$. Let $[\hat{\bar S}]^+$ be the preimage in $\hat{\bar S}$ of $\hat S^\Gamma$. We have the chain of subgroups $[\hat{\bar S}]^{+,\circ}=[\hat{\bar S}]^{\Gamma,\circ} \subset [\hat{\bar S}]^\Gamma \subset [\hat{\bar S}]^+$. Then $X^*(\hat{\bar S})=Y \otimes \Q$ and $\frac{Y\otimes\Q}{IY}[\tx{tor}] = X^*(\pi_0([\hat{\bar S}]^+))$. The isomorphism \eqref{eq:riisos} then becomes the duality pairing
\begin{equation} \label{eq:riduals}
\pi_0([\hat{\bar S}]^+) \otimes H^1(u \rw W,S \rw S) \rw \C^\times.
\end{equation}
This duality pairing again generalizes to the case of a connected reductive group $G$. Let $\hat G$ be the complex Langlands dual group of $G$ and let $\hat{\bar G}$ be the projective limit of all central isogenies with target $\hat G$. Defining $Z(\hat{\bar G})^+$ to be the preimage of $Z(\hat G)^\Gamma$ we again obtain the tower of subgroups $Z(\hat{\bar G})^{+,\circ}=Z(\hat{\bar G})^{\Gamma,\circ} \subset Z(\hat{\bar G})^{\Gamma} \subset Z(\hat{\bar G})^+$. We have the duality pairing
\begin{equation} \label{eq:ridualg}
\pi_0(Z(\hat{\bar G})^+) \otimes H^1(u \rw W,Z(G) \rw G) \rw \C^\times.
\end{equation}

\subsection{A comparison map $B(G)_\tx{bas} \rw H^1(u \rw W,Z(G) \rw G)$} \label{sub:cohcomp}
According to \cite[Proposition 3.2]{KalRI} there exists a unique $\phi_n \in \tx{Hom}_F(u,\mu_n)$ with the property that the image of $-1 \in \hat \Z=H^2(\Gamma,u)$ under $\phi_n$ is equal to $1 \in \Z/n\Z = H^2(\Gamma,\mu_n)$. For two natural numbers $n|m$, the composition of $\phi_m$ with $(\ )^\frac{m}{n} : \mu_m \rw \mu_n$ is equal to $\phi_n$. Thus we obtain $\phi \in \tx{Hom}_F(u,\bbmu)$, which sends $-1 \in \hat \Z = H^2(\Gamma,u)$ to $1 \in \hat \Z=H^2(\Gamma,\mu)$. We compose $\phi$ with the obvious map $\bbmu \rw \mb{D}$ and denote the result again by $\phi \in \tx{Hom}_F(u,\mb{D})$. We can then realize the extension \eqref{eq:bgext} as the push-out of the extension \eqref{eq:riext} along $\phi$, i.e.
\begin{equation} \label{eq:compdiag}
\xymatrix{
	1\ar[r]&u\ar[r]\ar[d]^\phi&W\ar[r]\ar[d]&\Gamma\ar[r]\ar@{=}[d]&1\\
	1\ar[r]&\mb{D}\ar[r]&\mc{E}\ar[r]&\Gamma\ar[r]&1\\
}
\end{equation}
Composing 1-cocycles with the homomorphism $W \rw \mc{E}$ provides a map
\begin{equation} \label{eq:compmap}
B(G)_\tx{bas} \rw H^1(u \rw W,Z(G) \rw G).
\end{equation}
Note that when $G$ is a torus, this map is a homomorphism of abelian groups and is moreover functorial. For general $G$, the map is a map of sets. It does not make sense to ask for its functoriality, because the assignments $G \mapsto B(G)_\tx{bas}$ and $G \mapsto H^1(u \rw W,Z(G) \rw G)$ are not functors.

We will now discuss how the map \eqref{eq:compmap} translates under the isomorphisms \eqref{eq:bgisos} and \eqref{eq:riisos}, as well as under the dualities \eqref{eq:bgdualg} and \eqref{eq:ridualg}.

\begin{lem} \label{lem:phiexp}
Let $E/F$ be a finite Galois extension and $n$ a divisor of $[E:F]$. Consider the map
\[ \tx{Res}_{E/F}\mu_{[E:F]} \stackrel{-N_{E/F}}{\lrw} \mu_{[E:F]} \stackrel{(\ )^{[E:F]/n}}{\lrw} \mu_n. \]
This map descends to $u_{E/F}$ and its composition with the natural projection $u \rw u_{E/F}$ equals $\phi_n$.
\end{lem}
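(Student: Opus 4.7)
The strategy is to invoke the uniqueness part of \cite[Prop.\ 3.2]{KalRI}: the homomorphism $\phi_n \in \tx{Hom}_F(u,\mu_n)$ is uniquely characterized by sending $-1 \in \hat\Z = H^2(\Gamma, u)$ to $1 \in \Z/n\Z = H^2(\Gamma,\mu_n)$, so it suffices to show that the map $\chi : u \rw \mu_n$ described in the lemma (i) is well defined, and (ii) has the same effect on $H^2$.

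For (i), if $\zeta \in \mu_{[E:F]}$ sits in $\tx{Res}_{E/F}\mu_{[E:F]}$ as a constant $\Gamma_{E/F}$-valued function, then $N_{E/F}(\zeta) = \zeta^{[E:F]} = 1$, so $-N_{E/F}$ kills the diagonal subgroup $\mu_{[E:F]}$ and descends to a map $u_{E/F} \rw \mu_{[E:F]}$. Post-composing with $(\ )^{[E:F]/n}$ and pulling back along $u \rw u_{E/F}$ defines $\chi$.

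For (ii), I would compute $\chi_* : H^2(\Gamma,u) \rw H^2(\Gamma,\mu_n)$ in three stages using the long exact sequence of $1 \rw \mu_{[E:F]} \rw \tx{Res}_{E/F}\mu_{[E:F]} \rw u_{E/F} \rw 1$. Since $\tx{cd}(\Gamma)=2$ kills $H^3(\Gamma,\mu_{[E:F]})$, and since the restriction $H^2(\Gamma,\mu_{[E:F]}) \rw H^2(\Gamma_E,\mu_{[E:F]})$ is multiplication by $[E:F]$ on $\Z/[E:F]\Z$ and hence zero, Shapiro's lemma yields an identification $H^2(\Gamma,u_{E/F}) \cong H^2(\Gamma_E,\mu_{[E:F]}) \cong \Z/[E:F]\Z$ via $\tx{inv}_E$. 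The identification $H^2(\Gamma,u) = \hat\Z$ of \cite[Thm.\ 3.1]{KalRI} is the inverse limit of these, so the projection $u \rw u_{E/F}$ induces the natural quotient $\hat\Z \thrw \Z/[E:F]\Z$. The norm $N_{E/F}$ corresponds under Shapiro to the corestriction $H^2(\Gamma_E,\mu_{[E:F]}) \rw H^2(\Gamma_F,\mu_{[E:F]})$, which by $\tx{inv}_F\circ\tx{cor} = \tx{inv}_E$ is the identity on $\Z/[E:F]\Z$; hence $-N_{E/F}$ acts as $-\tx{id}$. Finally, the long exact sequence of $1 \rw \mu_{[E:F]/n} \rw \mu_{[E:F]} \rw \mu_n \rw 1$ identifies $(\ )^{[E:F]/n}$ on $H^2$ with the natural projection $\Z/[E:F]\Z \thrw \Z/n\Z$. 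Composing, $-1 \mapsto -1 \mapsto +1 \mapsto +1$, as required.

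The main obstacle is sign and normalization bookkeeping. The minus in front of $N_{E/F}$ is placed precisely to cancel the minus in the characterizing condition $\phi_n(-1)=1$; a different convention on the identification $H^2(\Gamma,u) = \hat\Z$, on $\tx{inv}_F\circ\tx{cor}$, or on the direction of Shapiro's lemma would shift this sign. The subtlest point is to confirm that the inverse-limit identification of $H^2(\Gamma,u)$ with $\hat\Z$ used in \cite{KalRI} is the one compatible with the invariant-map identifications of each $H^2(\Gamma,u_{E/F})$ with $\Z/[E:F]\Z$ given above; granted this, the calculation is essentially forced.
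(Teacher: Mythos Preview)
Your proposal is correct and follows essentially the same route as the paper's proof: both invoke the uniqueness characterization of $\phi_n$ from \cite[Prop.~3.2]{KalRI}, identify $H^2(\Gamma,\tx{Res}_{E/F}\mu_{[E:F]})$ with $H^2(\Gamma_E,\mu_{[E:F]})=\Z/[E:F]\Z$ via Shapiro, observe that $N_{E/F}$ becomes corestriction under this identification, and use $\tx{inv}_F\circ\tx{cor}=\tx{inv}_E$ to conclude. The paper is terser about the identification $H^2(\Gamma,u_{E/F})\cong\Z/[E:F]\Z$ (it simply asserts that the image of $-1\in\hat\Z$ in $H^2(\Gamma,u_{E/F})$ equals the image of $-1$ from $H^2(\Gamma,\tx{Res}_{E/F}\mu_{[E:F]})$, appealing implicitly to the construction in \cite{KalRI}), whereas you recover this via the long exact sequence; but this is a difference of exposition, not of strategy.
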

\begin{proof}
We have $H^2(\Gamma,\tx{Res}_{E/F}\mu_{[E:F]})=H^2(\Gamma_E,\mu_{[E:F]})=\Z/[E:F]\Z$. The image of $-1 \in \hat\Z = H^2(\Gamma,u)$ in $H^2(\Gamma,u_{E/F})$ is equal to the image of $-1 \in \Z/[E:F]\Z=H^2(\Gamma,\tx{Res}_{E/F}\mu_{[E:F]})$ there. The lemma will be proved once we show that the map in the statement of the lemma maps $-1 \in \Z/[E:F]\Z$ to $1 \in \Z/n\Z=H^2(\Gamma,\mu_n)$.

The composition of the isomorphism $H^2(\Gamma_E,\mu_{[E:F]}) \cong H^2(\Gamma,\tx{Res}_{E/F}\mu_{[E:F]})$ with $N_{E/F}$ is equal to the corestriction map. The composition
\[ \Z/[E:F]\Z = H^2(\Gamma_E,\mu_{[E:F]}) \stackrel{\tx{cor}}{\lrw} H^2(\Gamma,\mu_{[E:F]}) = \Z/[E:F]\Z, \]
where we have used the local reciprocity maps for the fields $E$ and $F$, respectively, is equal to the identity. This completes the proof.
\end{proof}

\begin{pro} \label{pro:comps} Let $S$ be a torus. The composition of \eqref{eq:compmap} for $G=S$ with the isomorphisms \eqref{eq:bgisos} and \eqref{eq:riisos} is given by
\[ \frac{Y}{IY} \rw \frac{Y \otimes \Q}{IY}[\tx{tor}],\qquad y \mapsto y-N^\diamond(y). \]
\end{pro}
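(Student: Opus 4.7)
The plan is to reduce the verification to two compatibility checks, one on the Tate-Nakayama torsion subgroup and one on the restriction maps to homomorphisms from $\mb{D}$ and $u$. The target group $(Y\otimes\Q)/IY[\tx{tor}]$ fits in the short exact sequence
\[ 0 \rw (Y/IY)_\tx{tor} \rw (Y\otimes\Q)/IY[\tx{tor}] \rw (Y\otimes\Q)/Y \rw 0, \]
coming from $0 \rw Y \rw Y\otimes\Q \rw Y\otimes\Q/Y \rw 0$ after dividing by $IY$ and restricting to torsion. Once the comparison map and $y \mapsto y - N^\diamond(y)$ are shown to agree on $(Y/IY)_\tx{tor}$ and project to the same element of $(Y\otimes\Q)/Y$, their difference is a group homomorphism $Y/IY \rw (Y\otimes\Q)/IY[\tx{tor}]$ factoring through the torsion-free quotient $(Y/IY)/\tx{tor}$ and landing in $(Y/IY)_\tx{tor}$, hence is zero.

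For the torsion part, I observe that the comparison map is induced by composition of cocycles with $W \rw \mc{E}$, which commutes with the projections to $\Gamma$. This intertwines the inflations from $H^1(\Gamma,S)$, so under the identical Tate-Nakayama isomorphisms on the left of \eqref{eq:bgdiag} and \eqref{eq:ridiag} the comparison map restricted to $(Y/IY)_\tx{tor}$ becomes the natural inclusion into $(Y\otimes\Q)/IY[\tx{tor}]$. This matches $y \mapsto y - N^\diamond(y)$ there, since $N^\diamond$ vanishes on $(Y/IY)_\tx{tor}$ (an element of $Y/IY$ is torsion iff it lies in $\ker N^\diamond$, because $Y^\Gamma$ is torsion-free).

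For the Newton part, commutativity of \eqref{eq:compdiag} implies that if a cocycle on $\mc{E}$ restricts to a homomorphism $\lambda : \mb{D} \rw S$, then its image under the comparison map restricts to $\lambda \circ \phi : u \rw S$. For $y \in Y$, the isomorphism \eqref{eq:bgisos} produces a class with Newton image $N^\diamond(y) = [E:F]^{-1} N_{E/F}(y)$ (for any finite Galois $E/F$ splitting $S$), viewed under $\tx{Hom}_F(\mb{D},S) = Y\otimes\Q$ as the homomorphism $\mb{D} \rw \Phi(E/F) = \mb{G}_m \stackrel{N_{E/F}(y)}{\lrw} S$. The composition $u \stackrel{\phi}{\rw} \mb{D} \rw \Phi(E/F) = \mb{G}_m$ factors through $\mu_{[E:F]}$ as $u \stackrel{\phi_{[E:F]}}{\lrw} \mu_{[E:F]} \hookrightarrow \mb{G}_m$, because $\phi : u \rw \mb{D}$ itself factors through $\bbmu$. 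Lemma \ref{lem:phiexp} with $n = [E:F]$ then expresses $\phi_{[E:F]}$ as the descent of $-N_{E/F} : \tx{Res}_{E/F}\mu_{[E:F]} \rw \mu_{[E:F]}$ to $u_{E/F}$. The overall composition $u \rw S$ is therefore the one obtained by post-composing $\phi_{[E:F]}$ with $N_{E/F}(y) : \mu_{[E:F]} \rw S$, and absorbing the sign from Lemma \ref{lem:phiexp} into the cocharacter identifies it under $\tx{Hom}_F(u,S)^\Gamma = (Y\otimes\Q)/Y$ with $-[E:F]^{-1} N_{E/F}(y) = -N^\diamond(y)$. This matches the image of $y - N^\diamond(y)$ in $(Y\otimes\Q)/Y$, since $y \in Y$ vanishes there. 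The main obstacle is the careful bookkeeping of the sign via Lemma \ref{lem:phiexp}, which requires unwinding the precise identification of the isomorphism \eqref{eq:riisos} with $\tx{Hom}_F(u,S)$ so that the convention for $(Y\otimes\Q)/Y \cong \tx{Hom}_F(u,S)^\Gamma$ is consistent with the sign in Lemma \ref{lem:phiexp}.
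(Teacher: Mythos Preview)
Your two compatibility checks are correct and essentially coincide with what the paper does: the comparison map and $y \mapsto y - N^\diamond(y)$ agree on the torsion subgroup $(Y/IY)_\tx{tor}$ (both restrict to the inclusion coming from $H^1(\Gamma,S)$), and they agree after projection to $(Y\otimes\Q)/Y$ (both give $-N^\diamond(y)$, computed via Lemma~\ref{lem:phiexp}).

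However, the step where you conclude from these two checks that the maps are equal is wrong. You assert that the difference, being a homomorphism from the torsion-free quotient $(Y/IY)/(Y/IY)_\tx{tor}$ to the finite group $(Y/IY)_\tx{tor}$, must vanish. This is false: there are plenty of nonzero homomorphisms from a free abelian group to a finite group (e.g.\ $\Z \to \Z/2\Z$). Concretely, if $Y/IY \cong \Z \oplus \Z/2\Z$, the map $(n,\epsilon) \mapsto n \bmod 2$ vanishes on torsion and has image in torsion, yet is nonzero. So the two checks by themselves do \emph{not} pin down the map.

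The paper closes exactly this gap by a functoriality argument. The two checks do suffice when $(Y/IY)_\tx{tor}=0$, in particular for induced tori. For a general torus $S$, one chooses a surjection $\tilde S \to S$ from an induced torus (take a free $\Z[\Gamma_{E/F}]$-module surjecting onto $Y$). Since both the comparison map and $y \mapsto y - N^\diamond(y)$ are functorial in $S$, and since $B(\tilde S) \to B(S)$ is surjective by \cite[Proposition~10.4]{KotBG}, the equality for $\tilde S$ forces the equality for $S$. You should add this reduction; the rest of your argument is fine.
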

\begin{proof}
Let us denote by $\alpha$ the composition we are studying, and by $\beta$ the displayed map. Both of these are functorial homomorphisms and our goal is to show that they are equal. Taking a look at diagrams \eqref{eq:bgdiag} and \eqref{eq:ridiag} we note that both $\alpha$ and $\beta$ identify the copies of $\frac{Y}{IY}[\tx{tor}]$ embedded into their source and target. This leads to the diagram
\[ \xymatrix{
	0\ar[r]&\frac{Y}{IY}[\tx{tor}]\ar[r]\ar@{=}[d]&\frac{Y}{IY}\ar[r]^-{N^\diamond}\ar@<1ex>[d]^\beta\ar@<-1ex>[d]_\alpha&[Y\otimes\Q]^\Gamma\ar[d]^\gamma\\
	0\ar[r]&\frac{Y}{IY}[\tx{tor}]\ar[r]&\frac{Y \otimes \Q}{IY}[\tx{tor}]\ar[r]&\frac{Y \otimes \Q}{Y}
}\]
We claim that if $\gamma$ is given by multiplication by $-1$, followed by the inclusion $[Y \otimes \Q]^\Gamma \rw Y \otimes \Q$, followed by the projection $Y \otimes \Q \rw Y \otimes \Q/Y$, then the diagram commutes with both $\alpha$ and $\beta$. In the case of $\beta$ this is obvious, because the image of $\beta(y)=y-N^\diamond(y)$ in $Y\otimes \Q/Y$ is equal to $-N^\diamond(y)$ and thus coincides with the image of $N^\diamond(y)$ under $\gamma$. In the case of $\alpha$ we take $y \in Y$ and send it via \eqref{eq:bgisos} to an element $b_y \in B(S)$. The restriction of $b_y$ to $\mb{D}$ is the element of $\tx{Hom}(\mb{D},S)^\Gamma=[Y \otimes \Q]^\Gamma$ given by $N^\diamond(y)$ according to Diagram \eqref{eq:bgdiag}. The image of this element under $\gamma$ is then $-N^\diamond(y) \in Y \otimes \Q/Y$. On the other hand, let $c_y \in H^1(u \rw W,S \rw S)$ be the image of $b_y$ under \eqref{eq:compmap}. Then $c_y|_{u} = \phi \circ b_y|_{\mb{D}}$. To describe this, we use Lemma \ref{lem:phiexp}. It tells us that the dual of $\phi_n$ is the map $\Z/n\Z \rw (\Z/[E:F]\Z)[\Gamma_{E/F}]$ that sends $1 \in \Z/n\Z$ to $-\frac{[E:F]}{n}\sum_{\sigma \in \Gamma_{E/F}}[\sigma]$. If we identify $X^*(u)=\Q[\Gamma]$ and $X^*(\mb{D})=\Q$, this means that the dual of $\phi$ is the composition
\[ \Q \rw \Q \rw \Q/\Z \rw \Q/\Z[\Gamma] \]
of the negation, the natural projection, and the diagonal embedding. Hence composing $b_y|_{\mb{D}}$ with $\phi$ is the same as sending $N^\diamond(y)$ under
\[ [Y \otimes \Q]^\Gamma = \tx{Hom}(X,\Q)^\Gamma \rw \tx{Hom}(X,\Q/\Z[\Gamma])^\Gamma = \frac{Y \otimes \Q}{Y} \]
the result of which is $-N^\diamond(y)$.

We have thus proved the commutativity of the above diagram for both $\alpha$ and $\beta$. This proves the lemma for all tori $S$ for which $Y/IY[\tx{tor}]=0$. In particular, the lemma is proved for induced tori. The general case can be easily reduced to the case of induced tori. Indeed, let $S$ be any torus and let $E/F$ be a finite extension splitting $S$. Then $Y$ is a finitely generated $\Z[\Gamma_{E/F}]$-module and we choose a free $\Z[\Gamma_{E/F}]$-module $\tilde Y$ with a surjection $\tilde Y \rw Y$. If $\tilde S$ is the torus with $X_*(\tilde S)=\tilde Y$ we obtain a surjection of tori $\tilde S \rw S$. According to \cite[Proposition 10.4]{KotBG} the natural map $B(\tilde S) \rw B(S)$ is surjective. This, together with the equality $\alpha_{\tilde S}=\beta_{\tilde S}$ that we have just shown implies $\alpha_S=\beta_S$.
\end{proof}

Let now $G$ be a connected reductive group defined over $F$. In order to discuss how the comparison map \eqref{eq:compmap} translates under the dualities \eqref{eq:bgdualg} and \eqref{eq:ridualg} we need a convenient presentation of the cover $\hat{\bar G}$. For this, let $Z_n \subset Z(G)$ be the preimage in $Z(G)$ of $(Z(G)/Z(G_\tx{der}))[n]$. Then the $Z_n$ form an exhaustive tower of finite subgroups of $Z(G)$. Set $G_n = G/Z_n$, then $G_n = G_\tx{ad} \times Z(G_n)$ with $Z(G_n) = Z(G_1)/Z(G_1)[n]$, and $Z(G_1)=Z(G)/Z(G_\tx{der})$. Note that $Z(G_1)$, and hence also $Z(G_n)$, is a torus. Dually we obtain
\[ \hat G_n = \hat G_\tx{sc} \times \hat C_n \]
where $\hat C_n$ is the torus dual to $Z(G_n)$. It will be convenient to identify $\hat C_n=\hat C_1=Z(\hat G)^\circ$. Then the map $\hat C_m \rw \hat C_n$ becomes the $m/n$-power map on $\hat C_1$. We obtain
\[ \hat{\bar G} = \varprojlim \hat G_n = \hat G_\tx{sc} \times \hat C_\infty, \quad \hat C_\infty = \varprojlim \hat C_n. \]
Elements of $Z(\hat{\bar G})$ can thus be written as tuples $(a,(b_n)_n)$, where $a \in Z(\hat G_\tx{sc})$ and $b_n \in \hat C_1$ with $b_m^\frac{m}{n}=b_n$ for $n|m$. We make explicit the condition of $(a,(b_n)_n)$ to belong to each of the subgroups $Z(\hat{\bar G})^{+,\circ}=Z(\hat{\bar G})^{\Gamma,\circ} \subset Z(\hat{\bar G})^\Gamma \subset Z(\hat{\bar G})^+$ as follows. To be in $Z(\hat{\bar G})^+$, a tuple $(a,(b_n)_n)$ must have a $\Gamma$-fixed image in $Z(\hat G)$. This image is simply $a_\tx{der} \cdot b_1$, where $a_\tx{der}$ is the image of $a$ in $Z(\hat G_\tx{der})$. The condition of belonging to $Z(\hat{\bar G})^\Gamma$ is $a \in Z(\hat G_\tx{sc})^\Gamma$ and $b_n \in \hat C_1^\Gamma$. The condition of belonging to $Z(\hat{\bar G})^{+,\circ}=Z(\hat{\bar G})^{\Gamma,\circ}$ is $a=1$ and $b_n \in \hat C_1^{\Gamma,\circ}$.

\begin{pro} \label{pro:compg}
	Under the dualities \eqref{eq:bgdualg} and \eqref{eq:ridualg}, the comparison map \eqref{eq:compmap} is translated to the map
	\begin{equation} \label{eq:mapcompz} \pi_0((Z(G_\tx{sc}) \times \hat C_\infty)^+) \rw Z(\hat G)^\Gamma \end{equation}
	sending a tuple $(a,(b_n)_n)$ with $a \in Z(G_\tx{sc})$ and $b_n \in \hat C_n$ to
	\[ \frac{a_\tx{der} \cdot b_1}{N_{E/F}(b_{[E:F]})} \]
	for a sufficiently large finite Galois extension $E/F$.
\end{pro}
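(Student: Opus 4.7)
The strategy is to deduce the formula from its torus version Proposition~\ref{pro:comps} by reducing to the quotients $G_n = G/Z_n$ and exploiting the direct-product splitting $G_n = G_\tx{ad} \times Z(G_n)$.

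First I would use naturality of the comparison map \eqref{eq:compmap} along the surjection $G \rw G_n$. Dually, the presentation $\hat{\bar G} = \hat G_\tx{sc} \times \hat C_\infty$ together with $\hat G_n = \hat G_\tx{sc} \times \hat C_n$ shows that every element of $Z(\hat{\bar G})^+$ is already determined at some finite level $n$, so it suffices to compute the dual of the comparison map at level $n$ and pass to the limit. At level $n$, the direct-product splitting $G_n = G_\tx{ad} \times Z(G_n)$ induces
\begin{align*}
B(G_n)_\tx{bas} &= H^1(\Gamma, G_\tx{ad}) \times B(Z(G_n)), \\
H^1(u \rw W, Z(G_n) \rw G_n) &= H^1(\Gamma, G_\tx{ad}) \times H^1(u \rw W, Z(G_n) \rw Z(G_n)),
\end{align*}
because the centrality constraint on the restriction to $\mb{D}$ (respectively $u$) forces the $G_\tx{ad}$-component of any such cocycle to factor through $\Gamma$ (as $Z(G_\tx{ad})=1$). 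Since \eqref{eq:compdiag} is the identity on $\Gamma$, the comparison map is the identity on the common $H^1(\Gamma, G_\tx{ad})$ factor and coincides with the torus comparison map on the second factor.

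Next I would apply Proposition~\ref{pro:comps} to the torus $Z(G_n)$ and Pontryagin-dualize. The identity term in $y \mapsto y - N^\diamond(y)$ dualizes to $(b_n) \mapsto b_1$, while the divided norm $N^\diamond(y) = [E:F]^{-1} N_{E/F}(y)$ dualizes to $(b_n) \mapsto N_{E/F}(b_{[E:F]})$: division by $[E:F]$ on cocharacters corresponds dually to extraction of an $[E:F]$-th root, which is exactly what the coherence condition $b_m^{m/n} = b_n$ supplies. Meanwhile the $H^1(\Gamma, G_\tx{ad})$ factor is paired by Kottwitz/Tate-Nakayama with $Z(\hat G_\tx{sc})^\Gamma$, and since the comparison map is the identity on this factor, its dual contribution is $a \mapsto a_\tx{der} \in Z(\hat G_\tx{der}) \subset Z(\hat G)$. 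Writing the group law multiplicatively and combining the two contributions yields the asserted formula $(a, (b_n)) \mapsto a_\tx{der} \cdot b_1 / N_{E/F}(b_{[E:F]})$.

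The main obstacle is this last dualization: correctly matching the divided norm $N^\diamond$ on cocharacters with the multiplicative expression $N_{E/F}(b_{[E:F]})$ inside the pro-system $\hat C_\infty$, and checking that the resulting output genuinely lies in $Z(\hat G)^\Gamma$ and is independent of the sufficiently large choice of $E/F$. The latter is ultimately a consequence of the coherence relation, which ensures that enlarging $E$ refines both $b_1$ and $N_{E/F}(b_{[E:F]})$ in a compatible way.
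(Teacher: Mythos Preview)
Your approach differs from the paper's and has a genuine gap. The paper does not pass to the quotients $G_n$; instead it chooses an \emph{elliptic maximal torus} $S \subset G$, uses that $B(S)$ surjects onto $B(G)_\tx{bas}$ and that both dualities are compatible with $S \hookrightarrow G$, and then applies Proposition~\ref{pro:comps} directly to $S$. Ellipticity forces $N^\diamond(y) \in [X_*(Z(G))\otimes\Q]^\Gamma$, so in the decomposition $X_*(S)\otimes\Q = X_*(S_\tx{ad})\otimes\Q \oplus X_*(Z(G))\otimes\Q$ one gets $y - N^\diamond(y) = (y,\,y-N^\diamond(y))$, and dualizing this map $Y \rw \bar Y$ gives the asserted formula on $\hat S_\tx{sc} \times \hat C_\infty$ at once.

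The problem with your reduction to $G_n$ is that it computes the dual of the comparison map for $G_n$, which is a map $\pi_0(Z(\hat{\bar G_n})^+) \rw Z(\hat G_n)^\Gamma$, not the map you want. Although the towers satisfy $\hat{\bar G_n} = \hat{\bar G}$, the plus-subgroups do not agree: $Z(\hat{\bar G_n})^+$ is the preimage of $Z(\hat G_n)^\Gamma = Z(\hat G_\tx{sc})^\Gamma \times \hat C_n^\Gamma$, which forces the first coordinate $a$ itself to be $\Gamma$-fixed; by contrast $Z(\hat{\bar G})^+$ only requires $a_\tx{der} b_1 \in Z(\hat G)^\Gamma$. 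Modifying by the connected component $Z(\hat{\bar G})^{\Gamma,\circ} = 1 \times \hat C_\infty^{\Gamma,\circ}$ cannot change $a$, so elements $(a,(b_m)) \in Z(\hat{\bar G})^+$ with $a \notin Z(\hat G_\tx{sc})^\Gamma$ are never hit by $Z(\hat{\bar G_n})^+$ for any $n$. Thus your claim that ``every element of $Z(\hat{\bar G})^+$ is already determined at some finite level $n$'' is false in the sense you need, and the product-splitting computation for $G_n$ leaves the formula undetermined precisely on the interesting part of $\pi_0(Z(\hat{\bar G})^+)$. The elliptic-torus route in the paper sidesteps this entirely because $\hat S_\tx{sc}$ surjects onto all of $Z(\hat G_\tx{sc})$, not just its $\Gamma$-invariants.
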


Before we give the proof, let us note that this map is well-defined. By assumption, $a_\tx{der} \cdot b_1 \in Z(\hat G)^\Gamma$, and moreover $N_{E/F}(b_{[E:F]}) \in Z(\hat G)^\Gamma$, so the image of this map does belong to $Z(\hat G)^\Gamma$. The term $N_{E/F}(b_{[E:F]})$ is independent of the choice of $E/F$, and is well-defined provided $\Gamma_E$ acts trivially on $\hat C_1$. Finally, if $(a,(b_n)_n) \in Z(\hat{\bar G})^{+,\circ}$, then $a=1$ and $b_n \in \hat C_1^{\Gamma,\circ}$. Therefore $N_{E/F}(b_{[E:F]})=b_{[E:F]}^{[E:F]}=b_1$, so the image of $(a,(b_n)_n)$ is indeed equal to $1$.

\begin{proof}
Let $S \subset G$ be an elliptic maximal torus. Then for each $b \in B(S)$ the restriction $b|_\mb{D}$ takes values in $Z(G)$, because $\mb{D}$ is a split pro-torus. It follows that \eqref{eq:compmap} maps $B(S)$ to the subgroup $H^1(u \rw W,Z(G) \rw S)$ of $H^1(u \rw W,S \rw S)$. We can write this subgroup as the colimit
\[ H^1(u \rw W,Z(G) \rw S) = \varinjlim_n H^1(u \rw W,Z_n \rw S) \]
where $Z_n \subset Z(G)$ is as above. We can describe the subgroup of $\frac{Y \otimes \Q}{IY}[\tx{tor}]$ to which $H^1(u \rw W,Z(G) \rw S)$ corresponds under the isomorphism \eqref{eq:riisos} as follows. The quotient $S_n=S/Z_n$ is an elliptic maximal torus of $G_n$ and equals $S_\tx{ad} \times Z(G_n)$. Thus $X_*(S_n)= X_*(S_\tx{ad}) \oplus \frac{1}{n}X_*(Z(G_1))$ and if we let $\bar S=\varinjlim S_n$, then we get
\[ \bar Y = X_*(\bar S) = X_*(S_\tx{ad}) \oplus X_*(Z(G_1)) \otimes \Q \subset Y \otimes \Q. \]
Now let $y \in Y$ and consider the element $y-N^\diamond(y) \in Y\otimes \Q$ that is the image of $y$ under the map of Proposition \ref{pro:comps}. Since $S$ is elliptic, we have $N^\diamond(y) \in [X_*(S) \otimes \Q]^\Gamma = [X_*(Z(G))\otimes\Q]^\Gamma$. Under the natural pairing between $X^*(S)$ and $X_*(S)$ the element $N^\diamond(y)$ thus annihilates $X^*(S_\tx{ad}) \otimes \Q$, which implies that its image in $X_*(S_\tx{ad})\otimes \Q$ is zero. It follows that in the decomposition $Y \otimes \Q = X_*(S_\tx{ad}) \otimes \Q \oplus X_*(Z(G))\otimes\Q$ the element $y-N^\diamond(y) \in Y \otimes \Q$ has the coordinates $(y,y-N^\diamond(y))$. The map
\[ Y \rw \bar Y,\qquad y \mapsto (y,y-N^\diamond(y)) \]
dualizes to the map
\[ \hat S_\tx{sc} \times \hat C_\infty \rw \hat S,\qquad (a,(b_n)) \mapsto \frac{a_\tx{der}\cdot b_1}{N_{E/F}(b_{[E:F]})}. \]
To complete the proof, we use \cite[Proposition 5.3]{Kot85}, which says that the image of $B(S)$ in $B(G)$ equals $B(G)_\tx{bas}$, together with the fact that the map $B(S) \rw B(G)_\tx{bas}$ dualizes under \eqref{eq:bgdualg} to the inclusion $Z(\hat G)^\Gamma \rw \hat S^\Gamma$, while the map $H^1(u \rw W,Z(G) \rw S) \rw H^1(u \rw W,Z(G) \rw G)$ dualizes under \eqref{eq:ridualg} to the inclusion $\pi_0(Z(\hat{\bar G})^+) \rw \pi_0([\hat{\bar S}]^+)$.
\end{proof}

\section{The relationship between $\tx{LLC}_\tx{rig}$ and $\tx{LLC}_\tx{iso}$} \label{sec:compllc}

Let $G$ be a quasi-split connected reductive group defined over $F$. Let $\psi : G \rw G'$ be an inner twist. In this section we are going to compare two different statements of the refined local Langlands correspondence for $G'$. One is based on Kottwitz's cohomology set $B(G)$ of isocrystals with $G$-structure and is formulated in \cite[\S2.4]{KalIso}; we shall refer to it as $\tx{LLC}_\tx{iso}$. The other one is based on the cohomology set $H^1(u \rw W,Z(G) \rw G)$ and is formulated in \cite[\S5.4]{KalRI}; we will call it $\tx{LLC}_\tx{rig}$.

The statement $\tx{LLC}_\tx{iso}$ is defined for the given inner twist $\psi$ if and only if the class in $H^1(\Gamma,G_\tx{ad})$ of the 1-cocycle $\psi^{-1}\sigma(\psi)$ belongs to the image of the natural map $B(G)_\tx{bas} \rw H^1(\Gamma,G_\tx{ad})$. This map is surjective when $Z(G)$ is connected, thanks to \cite[Proposition 10.4]{KotBG}, and in this case $\tx{LLC}_\tx{iso}$ is always defined. In this section we will not assume that $Z(G)$ is connected, but instead we will assume that the class of $\psi^{-1}\sigma(\psi)$ does lift to $B(G)_\tx{bas}$. In the case when $\psi^{-1}\sigma(\psi)$ doesn't lift to $B(G)_\tx{bas}$, only the statement $\tx{LLC}_\tx{rig}$ is defined. In the next section we will establish results which allow us to compare $\tx{LLC}_\tx{rig}$ for the inner twist $\psi$ to $\tx{LLC}_\tx{iso}$ for a different group.

\subsection{Review of $\tx{LLC}_\tx{rig}$ and $\tx{LLC}_\tx{iso}$} \label{sub:reviewllc}
We will give here a brief review of the two formulations in order to establish the necessary notation. The reader my wish to consult the expository note \cite{KalSimons} as well as the references \cite{KalIso} and \cite{KalRI} for further details. The language we will use here is slightly different than in these references. This is done in order to emphasize the formal similarity of the two statements and facilitate their comparison. At the same time, we hope that the slightly different presentation given here can help to further illuminate the statements.

We also warn the reader that we have made a small change in Kottwitz's formulation of $\tx{LLC}_\tx{iso}$. The new statement is equivalent to the one given by Kottwitz when $Z(G)$ is connected, but is slightly different otherwise.

Let $\varphi : L_F \rw {^LG}$ be a tempered Langlands parameter. Set $S_\varphi = \tx{Cent}(\varphi,\hat G)$. The basic form of the local Langlands conjecture asserts the existence of an $L$-packet $\Pi_\varphi(G')$ of irreducible tempered representations of $G'(F)$. The two statements of the refined local Langlands conjecture we will review provide a parameterization of $\Pi_\varphi(G')$ and a description of its endoscopic transfer. They both depend on the choice of a Whittaker datum $\mf{w}$ for $G$ as well as on a choice of a certain 1-cocycle that lifts the 1-cocycle $\psi^{-1}\sigma(\psi) \in Z^1(\Gamma,G_\tx{ad})$.

We recall that an endoscopic datum for $G$ is a tuple $\mf{e}=(H,\mc{H},s,\xi)$ consisting of a quasi-split connected reductive group $H$, a split extension $\mc{H}$ of $\hat H$ by $W_F$ such that the homomorphism $W_F \rw \tx{Out}(\hat H)$ that it induces coincides under the canonical isomorphism $\tx{Out}(H)=\tx{Out}(\hat H)$with the homomorphism $\Gamma \rw \tx{Out}(H)$ given by the rational structure of $H$, an element $s \in Z(\hat H)^\Gamma$, and an $L$-embedding $\xi : \mc{H} \rw {^LG}$ that identifies $\hat H$ with $\tx{Cent}(\xi(s),\hat G)^\circ$.

Given a semi-simple element $s \in S_\varphi$, the pair $(s,\varphi)$ leads to an endoscopic datum $\mf{e}$ as follows. Set $\hat H=\tx{Cent}(s,\hat G)^\circ$, $\mc{H}=\hat H \cdot \varphi(W_F)$, and $\xi=\tx{id}$. The image of $\varphi$ is now trivially contained in $\mc{H}$.

We also recall that a $z$-pair for $\mf{e}$ is a tuple $\mf{z}=(H_1,\xi_1)$ consisting of a $z$-extension $H_1$ of $H$ and an $L$-embedding $\xi_1 : \mc{H} \rw {^LH_1}$ that extends the embedding $\hat H \rw \hat H_1$ dual to the projection $H_1 \rw H$. Note that if we compose $\varphi$ with $\xi_1$ we obtain a tempered Langlands parameter for $H_1$.

In what follows we will use the normalization of the transfer factor $\Delta'_\mf{w}$ described in \cite[(5.5.2)]{KS12}. It is a function that takes as arguments an element $\gamma_1 \in H_1(F)$ and an element $\delta \in G(F)$, both strongly regular semi-simple. We will also use a theorem of Steinberg which asserts that for any strongly regular semi-simple $\delta' \in G'(F)$ there exists $\delta \in G(F)$ that is stably conjugate to $\delta'$, by which we mean that the $G(\ol{F})$-conjugacy classes of $\delta$ and $\psi^{-1}(\delta')$ coincide. See \cite[Proposition 6.19]{PR94}, which is to be applied to $S'_\tx{der}=\tx{Cent}(\delta,G'_\tx{der})$.

The statement of $\tx{LLC}_\tx{iso}$ involves the choice of a 1-cocycle $x_\tx{iso} : \mc{E} \rw G$ such that the image of $x_\tx{iso}$ in $Z^1(\Gamma,G_\tx{ad})$ is equal to the 1-cocycle $\sigma \mapsto \psi^{-1}\sigma(\psi)$. While such a 1-cocycle may not exist in general, we are operating in this section under the assumption that it does. The pair $(\psi,x_\tx{iso})$ is then called an extended pure inner twist. The duality \eqref{eq:bgdualg} turns the cohomology class $[x_\tx{iso}]$ into a character $\<[x_\tx{iso}],-\>$ of $Z(\hat G)^\Gamma$.

Let $\delta \in G(F)$ and $\delta' \in G'(F)$ be strongly regular semi-simple elements and assume that they are stably conjugate. For any $g \in G(\ol{F})$ with $\delta'=\psi(g\delta g^{-1})$ the 1-cocycle
\[ \mc{E} \rw G,\qquad e \mapsto g^{-1}x_\tx{iso}(e)\sigma_e(g) \]
takes values in $S=\tx{Cent}(\delta,G)$. Its class is independent of the choice of $g$ and will be denoted by $\tx{inv}[x_\tx{iso}](\delta,\delta') \in B(S)$. Here $\sigma_e \in \Gamma$ is the image of $e \in \mc{E}$ under the natural projection $\mc{E} \rw \Gamma$.

We now recall the normalization $\Delta'[\mf{w},\mf{e},\mf{z},(\psi,x_\tx{iso})]$ of the Langlands-Shelstad transfer factor for the group $G'$ from \cite[\S2.3]{KalIso}. Let $\gamma_1 \in H_1(F)$ and $\delta' \in G'(F)$ be strongly regular related elements. Write $\gamma \in H(F)$ for the image of $\gamma_1$ and $S^H=\tx{Cent}(\gamma,H)$. Choose $\delta \in G(F)$ that is stably conjugate to $\delta'$. Then
\begin{equation} \label{eq:bgtf} \Delta'[\mf{w},\mf{e},\mf{z},(\psi,x_\tx{iso})](\gamma_1,\delta')=\Delta'_\mf{w}(\gamma_1,\delta) \cdot \<\tx{inv}[x_\tx{iso}](\delta,\delta'),s_{\gamma,\delta}\>. \end{equation}
Here $s_{\gamma,\delta} \in \hat S^\Gamma$ is the image of $s \in Z(\hat H)^\Gamma$ under the composition of the natural inclusion $Z(\hat H) \rw \hat S^H$ with $\hat\phi_{\gamma,\delta}^{-1}$, where $\phi_{\gamma,\delta} : S^H \rw S$ is the unique admissible isomorphism mapping $\gamma$ to $\delta$, and $\<-,-\>$ is the duality \eqref{eq:bgduals}.

We will now formulate the statement $\tx{LLC}_\tx{iso}(\psi,x_\tx{iso})$. Let $S_\varphi \cap \hat G_\tx{sc}$ denote the subgroup of $\hat G_\tx{sc}$ consisting of elements fixed by the action of $L_F$ on $\hat G_\tx{sc}$ given by $\tx{Ad}\circ\varphi$. Let $S_\varphi^\natural$ be the quotient of $S_\varphi$ by the image in $\hat G$ of $[S_\varphi \cap \hat G_\tx{sc}]^\circ$. This is a complex algebraic group. We alert the reader that when $Z(G)$ is not connected this group is slightly different from the group $S_\varphi/[S_\varphi \cap \hat G_\tx{der}]^\circ$ originally proposed by Kottwitz. Then $\tx{LLC}_\tx{iso}(\psi,x_\tx{iso})$ asserts that there is a bijection between the $L$-packet $\Pi_\varphi(G')$ and the set $\tx{Irr}(S_\varphi^\natural,[x_\tx{iso}])$ of those irreducible algebraic representations of $S_\varphi^\natural$ whose restriction of $Z(\hat G)^\Gamma$ is $\<[x_\tx{iso}],-\>$-isotypic. If for $\pi \in \Pi_\varphi(G')$ we denote by $\<\pi,-\>$ the character of the corresponding irreducible representation of $S_\varphi^\natural$, and by $\Theta_\pi$ the Harish-Chandra character of the representation $\pi$, then for any semi-simple element $s \in S_\varphi$ we can form the virtual character
\begin{equation} \label{eq:scharbg}
\Theta_{\varphi,[x_\tx{iso}]}^s = e(G')\sum_{\pi \in \Pi_\varphi(G')} \<\pi,s\>\Theta_\pi.
\end{equation}
Here $e(G')$ is the Kottwitz sign \cite{Kot83} of $G'$.
Then $\tx{LLC}_\tx{iso}(\psi,x_\tx{iso})$ asserts further that for all $f' \in \mc{C}^\infty_c(G'(F))$ the following character identity should hold
\begin{equation} \label{eq:charidbg}
\Theta^1_{\xi_1 \circ \varphi,1}(f^H) = \Theta^s_{\varphi,[x_\tx{iso}]}(f').
\end{equation}
Here we have constructed an endoscopic datum $\mf{e}$ from $s$ and $\varphi$ and have chosen an arbitrary $z$-pair $\mf{z}$ for $\mf{e}$. The function $f^H \in \mc{C}^\infty_c(H_1(F))$ is chosen to have matching orbital integrals with $f'$ with respect to the transfer factor $\Delta'[\mf{w},\mf{e},\mf{z},(\psi,x_\tx{iso})]$ as defined in \cite[\S5.5]{KS99}.

We will now review $\tx{LLC}_\tx{rig}$. It involves the choice of $x_\tx{rig} \in Z^1(u \rw W,Z(G) \rw G)$ whose image in $Z^1(\Gamma,G_\tx{ad})$ is equal to $\sigma \mapsto \psi^{-1}\sigma(\psi)$. The existence of this $x_\tx{rig}$ is guaranteed by \cite[Corollary 3.8]{KalRI}. The pair $(\psi,x_\tx{rig})$ is called a rigid inner twist. The duality \eqref{eq:ridualg} turns the cohomology class of $x_\tx{rig}$ into a character $\<[x_\tx{rig}],-\>$ of $\pi_0(Z(\hat{\bar G})^+)$. Here we are using the notation $\hat{\bar G}$ introduced in subsection \ref{sub:cohcomp}.

Let $\delta \in G(F)$ and $\delta' \in G'(F)$ be strongly regular semi-simple and assume that they are stably conjugate. For any $g \in G(\ol{F})$ with $\delta'=\psi(g\delta g^{-1})$ the 1-cocycle
\[ W \rw G,\qquad w \mapsto g^{-1}x_\tx{rig}(w)\sigma_w(g) \]
takes values in $S$. Its class is independent of the choice of $g$ and will be denoted by $\tx{inv}[x_\tx{rig}](\delta,\delta') \in H^1(u \rw W,Z(G) \rw S)$. Here $\sigma_w \in \Gamma$ is the image of $w \in W$ under the natural projection $W \rw \Gamma$.

Let $\mf{e}=(H,\mc{H},s,\xi)$ and $\mf{z}=(H_1,\xi_1)$ be an endoscopic datum and $z$-pair. There is again a normalization of the transfer factor, but it involves a refinement of $\mf{e}$. This refinement is a tuple $\mf{\dot e}=(H,\mc{H},\dot s,\xi)$. The only difference is the element $\dot s \in Z(\hat{\bar H})^+$, which is a lift of $s$. Here $\hat{\bar H}$ is the inverse limit of $\hat H_n$, where the quotient $H_n=H/Z_n$ is formed by using the canonical injection $Z(G) \rw Z(H)$ to map $Z_n \subset Z(G)$ into $Z(H)$. The definition of the transfer factor is then given by
\begin{equation} \label{eq:ritf} \Delta'[\mf{w},\mf{\dot e},\mf{z},(\psi,x_\tx{rig})](\gamma_1,\delta')=\Delta'_\mf{w}(\gamma_1,\delta) \cdot \<\tx{inv}[x_\tx{rig}](\delta,\delta'),\dot s_{\gamma,\delta}\>. \end{equation}
To describe $\dot s_{\gamma,\delta}$, recall the map $Z(\hat H) \rw \hat S$ induced by the admissible isomorphism $\phi_{\gamma,\delta}$. It lifts uniquely to a map $Z(\hat{\bar H}) \rw \hat{\bar S}$ and $\dot s_{\gamma,\delta}$ is the image of $\dot s$ under this map. It is paired with $\tx{inv}[x_\tx{rig}](\delta,\delta') \in H^1(u \rw W,Z(G) \rw S)$ using the duality \eqref{eq:riduals}.

Let now $S_\varphi^+$ be the preimage of $S_\varphi$ in $\hat{\bar G}$. Then $\tx{LLC}_\tx{rig}(\psi,x_\tx{rig})$ asserts that there is a bijection between the $L$-packet $\Pi_\varphi(G')$ and the set $\tx{Irr}(\pi_0(S_\varphi^+),[x_\tx{rig}])$ of those irreducible representations of the pro-finite group $\pi_0(S_\varphi^+)$ whose restriction to $\pi_0(Z(\hat{\bar G})^+)$ is $\<[x_\tx{rig}],-\>$-isotypic. If for $\pi \in \Pi_\varphi(G')$ we denote by $\<\pi,-\>$ the character of the corresponding irreducible representation of $\pi_0(S_\varphi^+)$, then for any semi-simple element $\dot s \in S_\varphi^+$ we can form the virtual character
\begin{equation} \label{eq:scharri}
\Theta_{\varphi,[x_\tx{rig}]}^{\dot s} = e(G')\sum_{\pi \in \Pi_\varphi(G')} \<\pi,\dot s\>\Theta_\pi
\end{equation}
and $\tx{LLC}_\tx{rig}(\psi,x_\tx{rig})$ asserts further that for all $f' \in \mc{C}^\infty_c(G'(F))$ the following character identity should hold
\begin{equation} \label{eq:charidri}
\Theta^1_{\xi_1 \circ \varphi,1}(f^H) = \Theta^{\dot s}_{\varphi,[x_\tx{rig}]}(f').
\end{equation}
Here we have constructed a refined endoscopic datum $\mf{\dot e}$ from $\dot s$ and $\varphi$ and have chosen an arbitrary $z$-pair $\mf{z}$ for $\mf{e}$. The function $f^H \in \mc{C}^\infty_c(H_1(F))$ is chosen to have matching orbital integrals with $f'$ with respect to the transfer factor $\Delta'[\mf{w},\mf{\dot e},\mf{z},(\psi,x_\tx{rig})]$.

\subsection{Comparison}
We will now show that $\tx{LLC}_\tx{iso}(\psi,x_\tx{iso})$ and $\tx{LLC}_\tx{rig}(\psi,x_\tx{rig})$ are equivalent, provided $x_\tx{rig}$ is the image of $x_\tx{iso}$ under the comparison map \eqref{eq:compmap}. As in the previous subsection we fix a Whittaker datum $\mf{w}$ for $G$ and let $\varphi : L_F \rw {^LG}$ be a tempered Langlands parameter. Recall from subsection \ref{sub:cohcomp} that $\hat{\bar G} = \hat G_\tx{sc} \times \hat C_\infty$. We can thus write elements of $S_\varphi^+ \subset \hat{\bar G}$ as pairs $(a,(b_n)_n)$ with $a \in \hat G_\tx{sc}$ and $b_n \in \hat C_n$. Taking our cue from Proposition \ref{pro:compg} we introduce the homomorphism
\begin{equation} \label{eq:mapcomps} S_\varphi^+ \rw S_\varphi,\quad (a_\tx{der},(b_n)) \mapsto \frac{a \cdot b_1}{N_{E/F}(b_{[E:F]})}. \end{equation}
Here again $a_\tx{der} \in \hat G_\tx{der}$ is the image of $a$, $E/F$ is a suitably large finite Galois extension and the expression $N_{E/F}(b_{[E:F]})$ is independent of the choice of $E/F$. The fact that this map is a group homomorphism is clear since the elements $b_n$ are central. Furthermore, we have $S_\varphi^{+,\circ} = \tx{Cent}(\varphi,\hat{\bar G})^\circ = [S_\varphi \cap \hat G_\tx{sc}]^\circ \times \hat C_\infty^\circ$. Thus for $(a,(b_n)_n) \in S_\varphi^{+,\circ}$ we have $N_{[E:F]}(b_{[E:F]})=b_1$ and the image of $(a,(b_n)_n)$ in $S_\varphi$ is simply $a \in [S_\varphi \cap \hat G_\tx{sc}]^\circ$, showing that \eqref{eq:mapcomps} induces a group homomorphism $\pi_0(S_\varphi^+) \rw S_\varphi^\natural$.

Let $x_\tx{iso} : \mc{E} \rw G$ be a 1-cocycle whose image in $Z^1(\Gamma,G_\tx{ad})$ equals $\psi^{-1}\sigma(\psi)$. Let $x_\tx{rig} : W \rw G$ be the composition of $x_\tx{iso}$ with the homomorphism $W \rw \mc{E}$ of Diagram \eqref{eq:compdiag}. Thus the class $[x_\tx{rig}] \in H^1(u \rw W,Z(G) \rw G)$ of $x_\tx{rig}$ is the image of the class $[x_\tx{iso}] \in B(G)_\tx{bas}$ of $x_\tx{iso}$ under \eqref{eq:compmap}. We denote by $\<[x_\tx{iso}],-\>$ and $\<[x_\tx{rig}],-\>$ the characters of $Z(\hat G)^\Gamma$ and $\pi_0(Z(\hat{\bar G})^+)$ given by the dualities \eqref{eq:bgdualg} and \eqref{eq:ridualg}.

\begin{lem}
Pullback along \eqref{eq:mapcomps} induces a bijection
\begin{equation} \label{eq:bijirr}
\tx{Irr}(S_\varphi^\natural,[x_\tx{iso}]) \rw \tx{Irr}(\pi_0(S_\varphi^+),[x_\tx{rig}]).
\end{equation}
\end{lem}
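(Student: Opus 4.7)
My plan is to exhibit the pullback bijection via a central-character extension argument, working with the induced homomorphism $\pi_0(S_\varphi^+) \rw S_\varphi^\natural$ coming from \eqref{eq:mapcomps}. The compatibility of central characters under pullback is essentially free: by Proposition \ref{pro:compg}, the restriction of \eqref{eq:mapcomps} to $Z(\hat{\bar G})^+$ is exactly the map \eqref{eq:mapcompz} translating the comparison map \eqref{eq:compmap}, so using the compatibility of the dualities \eqref{eq:bgdualg}, \eqref{eq:ridualg} and the defining relation that $[x_\tx{rig}]$ is the image of $[x_\tx{iso}]$ under \eqref{eq:compmap}, the character $\<[x_\tx{rig}],-\>$ of $\pi_0(Z(\hat{\bar G})^+)$ is the pullback of $\<[x_\tx{iso}],-\>$; in particular it is trivial on the kernel of $\pi_0(Z(\hat{\bar G})^+) \rw Z(\hat G)^\Gamma$.

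The heart of the proof is then to establish two structural facts. First, $S_\varphi^\natural$ is generated by the image of $\pi_0(S_\varphi^+)$ together with the image of the central subgroup $Z(\hat G)^\Gamma$: given $s \in S_\varphi$, use $\hat G = \hat G_\tx{der} \cdot Z(\hat G)^\circ$ to write $s = s_\tx{der} \cdot z$, lift $s_\tx{der}$ to some $a \in \hat G_\tx{sc}$, and choose any compatible system $(b_n) \in \hat C_\infty$ with $b_1 = z$; then $(a,(b_n)) \in S_\varphi^+$ and its image under \eqref{eq:mapcomps} is $s/N_{E/F}(b_{[E:F]})$, which differs from $s$ by $N_{E/F}(b_{[E:F]}) \in Z(\hat G)^\Gamma$. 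Second, the kernel of the induced map is contained in the image of $\pi_0(Z(\hat{\bar G})^+)$ inside $\pi_0(S_\varphi^+)$: if a representative $(a,(b_n))$ has image under \eqref{eq:mapcomps} lying in the image of $[S_\varphi \cap \hat G_\tx{sc}]^\circ$, lift it to $a_0 \in [S_\varphi \cap \hat G_\tx{sc}]^\circ$ and replace $(a,(b_n))$ by $(a a_0^{-1}, (b_n))$; since $(a_0,1,1,\ldots) \in S_\varphi^{+,\circ}$ this preserves the $\pi_0$-class. A direct calculation gives $(a a_0^{-1})_\tx{der} = N_{E/F}(b_{[E:F]})/b_1 \in Z(\hat G)^\circ \cap \hat G_\tx{der}$, a finite group whose preimage in $\hat G_\tx{sc}$ under the central isogeny $\hat G_\tx{sc} \rw \hat G_\tx{der}$ sits inside $Z(\hat G_\tx{sc})$; hence the modified representative lies in $Z(\hat{\bar G})^+$.

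Granted these two facts, the bijection follows by a routine argument. For $\pi \in \tx{Irr}(S_\varphi^\natural, [x_\tx{iso}])$, every subspace of $\pi$ invariant under the image of the induced map is automatically $Z(\hat G)^\Gamma$-invariant by centrality and the fixed central character, hence $S_\varphi^\natural$-invariant by the first fact, so the pullback is irreducible; and it belongs to $\tx{Irr}(\pi_0(S_\varphi^+), [x_\tx{rig}])$ by the central-character compatibility. Injectivity is immediate, since two irreducibles agreeing after restriction to the image of $\pi_0(S_\varphi^+)$ and having the same central character on $Z(\hat G)^\Gamma$ coincide on the product, which is $S_\varphi^\natural$. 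For surjectivity, given $\tau \in \tx{Irr}(\pi_0(S_\varphi^+), [x_\tx{rig}])$, the central compatibility makes $\tau$ trivial on the kernel of $\pi_0(Z(\hat{\bar G})^+) \rw Z(\hat G)^\Gamma$, and by the second fact $\tau$ is trivial on the full kernel of the induced map; so $\tau$ descends to the image of $\pi_0(S_\varphi^+)$ in $S_\varphi^\natural$ and extends uniquely to all of $S_\varphi^\natural$ by prescribing the character $\<[x_\tx{iso}],-\>$ on $Z(\hat G)^\Gamma$, with consistency on the overlap again guaranteed by the central compatibility.

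The main obstacle, I expect, will be the second structural fact: verifying that after the modification by an element of $[S_\varphi \cap \hat G_\tx{sc}]^\circ$ the representative indeed lies in the central subgroup $Z(\hat{\bar G})^+$, which requires careful interaction between the product decomposition $\hat{\bar G} = \hat G_\tx{sc} \times \hat C_\infty$ and the central isogeny $\hat G_\tx{sc} \rw \hat G_\tx{der}$.
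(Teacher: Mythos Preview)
Your proposal is correct and follows essentially the same route as the paper's proof: both arguments hinge on the same two structural facts, namely that $S_\varphi^\natural$ is generated by the image of \eqref{eq:mapcomps} together with $Z(\hat G)^\Gamma$, and that (after modifying by an element of $[S_\varphi \cap \hat G_\tx{sc}]^\circ$) any representative of the kernel of the induced map lies in $Z(\hat{\bar G})^+$ and in fact in the kernel of \eqref{eq:mapcompz}. Your treatment is in one respect more explicit than the paper's: you spell out why the modified element $aa_0^{-1}$ lies in $Z(\hat G_\tx{sc})$ via $Z(\hat G)^\circ \cap \hat G_\tx{der} \subset Z(\hat G_\tx{der})$, a point the paper passes over.

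One small presentational wrinkle: your \emph{statement} of the second structural fact (kernel contained in the image of $\pi_0(Z(\hat{\bar G})^+)$) is a touch weaker than what your computation actually delivers and what your surjectivity argument needs. The computation shows that the modified representative $(aa_0^{-1},(b_n))$ has image $1$ under \eqref{eq:mapcomps}, hence lies in the kernel of \eqref{eq:mapcompz}; it is this stronger conclusion that justifies the step ``by the second fact $\tau$ is trivial on the full kernel.'' When you write this up, state the second fact in the sharper form (the kernel of $\pi_0(S_\varphi^+) \rw S_\varphi^\natural$ lies in the image of the kernel of \eqref{eq:mapcompz}), which is exactly what the paper asserts and what your calculation already proves.
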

\begin{proof}
For the proof we need to study the kernel and image of \eqref{eq:mapcomps}. By definition of $S_\varphi^+$, the map $(a,(b_n)_n) \mapsto a_\tx{der} \cdot b_1$ is a surjection onto $S_\varphi$, and we see that $S_\varphi$ is equal to the product of $Z(\hat G)^\Gamma$ with the image of \eqref{eq:mapcomps}. This implies that composing an irreducible representation of $S_\varphi^\natural$ with \eqref{eq:mapcomps} leads to an irreducible representation of $\pi_0(S_\varphi^+)$. Moreover, according to Proposition \ref{pro:compg}, if we start with an element of $\tx{Irr}(S_\varphi^\natural,[x_\tx{iso}])$ the result will be an element of $\tx{Irr}(\pi_0(S_\varphi^+),[x_\tx{rig}])$.

We have thus shown that composition with \eqref{eq:mapcomps} induces a map $\tx{Irr}(S_\varphi^\natural,[x_\tx{iso}]) \rw \tx{Irr}(\pi_0(S_\varphi^+),[x_\tx{rig}])$. We will now argue that this map is bijective. Injectivity follows immediately from the fact that $Z(\hat G)^\Gamma$ and the image of \eqref{eq:mapcomps} generate $S_\varphi^\natural$, as one sees for example by examining the characters of the irreducible representations.

For surjectivity, we study the kernel of \eqref{eq:mapcomps}. If $(a,(b_n)_n) \in S_\varphi^+$ belongs to that kernel, then $a_\tx{der}b_1N_{[E:F]}(b_{[E:F]})^{-1}$ lifts to an element $e \in [S_\varphi \cap \hat G_\tx{sc}]^\circ$. We may replace $a$ by $ae^{-1}$ without changing the class of $(a,(b_n)_n)$ modulo $S_\varphi^{+,\circ}$, thereby achieving $a_\tx{der}b_1=N_{[E:F]}(b_{[E:F]}) \in Z(\hat G)^\Gamma$. Thus $(a,(b_n)_n)$ is an element of $Z(\hat{\bar G})^+$ and moreover belongs to the kernel of \eqref{eq:mapcompz}. We conclude that under the natural map $\pi_0(Z(\hat{\bar G})^+) \rw \pi_0(S_\varphi^+)$ the kernel of \eqref{eq:mapcompz} surjects onto the kernel of \eqref{eq:mapcomps}. Since $\<[x_\tx{rig}],-\>$ is the pull-back of $\<[x_\tx{iso}],-\>$ under \eqref{eq:mapcompz}, any $\rho \in \tx{Irr}(\pi_0(S_\varphi^+),[x_\tx{rig}])$ is trivial on the kernel of \eqref{eq:mapcomps} and thus descends to a representation of the image of this map. We extend this representation to $S_\varphi^\natural$ by letting it be given by $\<[x_\tx{iso}],-\>$ on the image of $Z(\hat G)^\Gamma$ in $S_\varphi^\natural$. The result is an element of $\tx{Irr}(S_\varphi^\natural,[x_\tx{iso}])$ whose pull-back to $\pi_0(S_\varphi^+)$ equals $\rho$. This completes the proof of surjectivity.
\end{proof}

We will now compare the character identities \eqref{eq:charidbg} and \eqref{eq:charidri}. Let $\dot s_\tx{rig} \in S_\varphi^+$ and let $s_\tx{iso} \in S_\varphi$ be the image of $\dot s_\tx{rig}$ under \eqref{eq:mapcomps}. By construction of the bijection \eqref{eq:bijirr} we have
\begin{equation} \label{eq:charidrhs}
\Theta_{\varphi,[x_\tx{iso}]}^{s_\tx{iso}} = \Theta_{\varphi,[x_\tx{rig}]}^{\dot s_\tx{rig}},
\end{equation}
so the right-hand-sides of \eqref{eq:charidbg} and \eqref{eq:charidri} agree. To compare the left-hand-sides, we let $\mf{\dot e}_\tx{rig}=(H,\mc{H},\dot s_\tx{rig},\xi)$ be the refined endoscopic datum corresponding to $\dot s_\tx{rig}$ and $\varphi$. The endoscopic datum corresponding to $s_\tx{iso}$ and $\varphi$ is then $\mf{e}_\tx{iso}=(H,\mc{H},s_\tx{iso},\xi)$. That is, the terms $H$, $\mc{H}$, and $\xi$
 are common to both $\mf{\dot e}_\tx{rig}$ and $\mf{e}_\tx{iso}$. The reason for this is that if we write $\dot s_\tx{rig} = (a,(b_n)_n)$, the image of $\dot s_\tx{rig}$ in $S_\varphi$ under the natural projection $S_\varphi^+ \rw S_\varphi$ is equal to $a_\tx{der} \cdot b_1$ and differs from $s_\tx{iso}$ only by the element $N_{E/F}(b_{[E:F]}) \in Z(\hat G)^\Gamma$. In particular, we may fix a $z$-pair $\mf{z}=(H_1,\xi_1)$ that serves both $\mf{\dot e}_\tx{rig}$ and $\mf{e}_\tx{iso}$.

We claim that for any strongly regular semi-simple elements $\gamma_1 \in H_1(F)$ and $\delta' \in G'(F)$ we have
\[ \Delta'[\mf{w},\mf{\dot e}_\tx{rig},\mf{z},(\psi,x_\tx{rig})](\gamma_1,\delta') = \Delta'[\mf{w},\mf{e}_\tx{iso},\mf{z},(\psi,x_\tx{iso})](\gamma_1,\delta'). \]
For this, fix $\delta \in G(F)$ and $g \in G(\ol{F})$ such that $\delta'=\psi(g\delta g^{-1})$. Setting as before $S=\tx{Cent}(\delta,G)$ we have $\tx{inv}[x_\tx{rig}](\delta,\delta') \in H^1(u \rw W,Z(G) \rw S)$ represented by the 1-cocycle $w \mapsto g^{-1}x_\tx{rig}(w)\sigma_w(g)$ as well as $\tx{inv}[x_\tx{iso}](\delta,\delta') \in B(S)$ represented by the 1-cocycle $e \mapsto g^{-1}x_\tx{iso}(e)\sigma_e(g)$. Since $x_\tx{iso}$ is the composition of $x_\tx{rig}$ with the homomorphism $W \rw \mc{E}$ of Diagram \eqref{eq:compdiag}, the same is true for the 1-cocycles representing the two invariants. In other words, $\tx{inv}[x_\tx{iso}](\delta,\delta')$ is the image of $\tx{inv}[x_\tx{rig}](\delta,\delta')$ under the map \eqref{eq:compmap} for the torus $S$. Proposition \ref{pro:compg} applied to the torus $S$ then implies that
\[ \<\tx{inv}[x_\tx{iso}](\delta,\delta'),s_\tx{iso}\> = \<\tx{inv}[x_\tx{rig}](\delta,\delta'),\dot s_\tx{rig}\>\]
and this proves the claim about the equality of transfer factors. This in turn implies that the function $f^H$ occurring in the left-hand-side of \eqref{eq:charidbg} is the same as the function $f^H$ occurring in the left-hand-side of \eqref{eq:charidri}. Thus the two left-hand-sides are equal. This shows that the equations \eqref{eq:charidbg} and \eqref{eq:charidri} are equivalent.

\section{Reducing $\tx{LLC}_\tx{rig}$ to the case of groups with connected center} \label{sec:redcent}

In the last section we showed that when $G$ is a connected reductive group defined and quasi-split over $F$ and $\psi : G \rw G'$ is an inner twist whose corresponding class in $H^1(\Gamma,G_\tx{ad})$ lifts to an element $[x_\tx{iso}] \in B(G)_\tx{bas}$, then $\tx{LLC}_\tx{iso}(\psi,x_\tx{iso})$ is equivalent to $\tx{LLC}_\tx{rig}(\psi,x_\tx{rig})$, where $x_\tx{rig}$ is the image of $x_\tx{iso}$ under the comparison map \eqref{eq:compmap}. When $G$ does not have connected center, then the class of $\psi$ may fail to lift to $B(G)_\tx{bas}$ (for example, this is always the case when $G$ is simply connected). In that case we do not have a statement for $\tx{LLC}_\tx{iso}$. There is however a statement for $\tx{LLC}_\tx{rig}$, since the class of $\psi$ always lifts to $H^1(u \rw W,Z(G) \rw G)$. In this section we will construct for any connected reductive group $G$ an embedding $G \rw G_z$ into a connected reductive group $G_z$ that has connected center and comparable endoscopy. We will also construct an inner twist $\psi_z : G_z \rw G'_z$ corresponding to $\psi$. We will then show that $\tx{LLC}_\tx{rig}(\psi,x_\tx{rig})$ is equivalent to $\tx{LLC}_\tx{rig}(\psi_z,x_\tx{rig})$, for any $x_\tx{rig} \in Z^1(u \rw W,Z(G) \rw G)$ lifting $\psi^{-1}\sigma(\psi)$. Combining this with the result of the previous section, and using the fact that now there does exist $[x_\tx{iso}] \in B(G_z)_\tx{bas}$ lifting the class of $\psi_z$, this implies that $\tx{LLC}_\tx{rig}(\psi,x_\tx{rig})$ is equivalent to $\tx{LLC}_\tx{iso}(\psi_z,x_\tx{iso})$ provided that the images of $[x_\tx{rig}] \in H^1(u \rw W,Z(G) \rw G)$ and $[x_\tx{iso}] \in B(G_z)_\tx{bas}$ in $H^1(u \rw W,Z(G_z) \rw G_z)$ coincide. In other words, once the validity of $\tx{LLC}_\tx{iso}$ is established for all extended pure inner twists of connected reductive groups with connected center, it implies the validity of $\tx{LLC}_\tx{rig}(\psi,x_\tx{rig})$ for all inner twists $\psi : G \rw G'$ of connected reductive groups, without assumption on the center, and some suitable $x_\tx{rig}$ lifting $\psi^{-1}\sigma(\psi)$. The final step would then be to establish the validity of $\tx{LLC}_\tx{rig}(\psi,x_\tx{rig})$ for all $x_\tx{rig}$ lifting $\psi^{-1}\sigma(\psi)$, not just those corresponding to elements $[x_\tx{iso}] \in B(G_z)_\tx{bas}$. This will be addressed in the next section.

\subsection{$z$-embeddings} \label{sub:zemb}
We will introduce here the notion of a $z$-embedding and collect some of its properties. A $z$-embedding is a procedure which embeds a given connected reductive group $G$ over a $p$-adic field $F$ into a connected reductive group $G_z$ with comparable endoscopy and connected center. The idea about the construction of $G_z$ is due to Kottwitz, who communicated it verbally to the author some years ago. It forms the core of Proposition \ref{pro:zembex}. It turns out however, that the procedure of taking a $z$-embedding is not directly compatible with passage to endoscopic groups, and it is also not transitive. Luckily, a somewhat weaker notion, that of a pseudo-$z$-embedding, does have the necessary flexibility. For this reason we work in this subsection with the weaker notion, which turns out to suffice for our applications. The main properties of pseudo-$z$-embeddings are the fact that their representation theory and endoscopy is related to that of the original group in a very close and straightforward way, and that they are in some sense stable under taking endoscopic groups and also under iteration.

We alert the reader that the requirement that a (pseudo-)$z$-embedding have the same endoscopy as the original group makes it a much more delicate object than an arbitrary group with connected center into which the original group embeds. For example, the embedding of $\tx{SL}_n$ into $\tx{GL}_n$ is not a $z$-embedding. Furthermore, a $z$-embedding is not simply the dual notion to a $z$-extension, as it has to satisfy a more stringent cohomological requirement. Finally, we want to point out that a $z$-embedding is always a ramified group, even if the original group is unramified. This additional ramification is benign, as it only affects the center, but it is nonetheless present. This makes the application of this notion to a global setting problematic. Thankfully, our needs here are purely local.

\subsubsection{Definition and construction}

Let $G$ be a connected reductive group defined over $F$.
\begin{dfn} A pseudo-$z$-embedding of $G$ is an embedding $G \rw G_z$ of $G$ into a connected reductive group $G_z$ defined over $F$, subject to the following conditions
\begin{enumerate}
\item $G_z/G$ is a torus;
\item $H^1(F,G_z/G)=1$;
\item the natural map
$H^1(F,Z(G)) \rightarrow H^1(F,Z(G_z))$
is bijective.
\end{enumerate}
If moreover $Z(G_z)$ is connected and $G_z/G$ is an induced torus we will call this a $z$-embedding.
\end{dfn}

\begin{pro} \label{pro:zembex}
Let $Z$ be a diagonalizable group defined over $F$. There exists an embedding $Z \rw T$ of $Z$ into a torus $T$ defined over $F$ with the property that $T/Z$ is an induced torus and $H^1(\Gamma,Z) \rw H^1(\Gamma,T)$ is a bijection.
\end{pro}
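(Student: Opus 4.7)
Via Cartier duality, constructing the embedding $Z \hookrightarrow T$ with $T/Z$ induced is equivalent to producing a short exact sequence of $\Z[G]$-modules
\[ 0 \to P \to M \to A \to 0, \]
where $A := X^*(Z)$, $P$ is a permutation $\Z[G]$-module (corresponding dually to the induced torus $T/Z$), $M$ is a $\Z[G]$-lattice (corresponding to $T$), and $G := \Gamma_{E/F}$ for some finite Galois extension $E/F$ splitting $Z$. The further cohomological condition -- bijectivity of $H^1(\Gamma, Z) \to H^1(\Gamma, T)$ -- becomes, via Shapiro's lemma (giving $H^1(\Gamma, T/Z) = 0$) and the long exact sequence, equivalent to the surjectivity of $T(F) \to (T/Z)(F)$.

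For the algebraic construction, I would pick a $G$-stable generating set of $A$, yielding a $\Z[G]$-surjection $\pi : Q \twoheadrightarrow A$ from a permutation module $Q$ with kernel $N$ a $\Z[G]$-lattice. Then I would embed $N$ as a saturated sublattice of a permutation module, the canonical choice being $P := \operatorname{Ind}_{\{1\}}^G N = \Z[G] \otimes_\Z N$ with embedding $n \mapsto \sum_{g \in G} [g] \otimes g^{-1}n$; a direct calculation shows this embedding is saturated. Forming the pushout $M := (Q \oplus P)/N^{\mathrm{antidiag}}$ gives an extension $0 \to P \to M \to A \to 0$. For $M$ to be torsion-free the antidiagonal $N \hookrightarrow Q \oplus P$ must be saturated, which requires $N$ saturated in $Q$ as well; since $N = \ker\pi$ with $Q/N = A$, this fails precisely when $A$ has torsion. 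I would resolve this by initially enlarging $Q$ so that the saturation of $N$ lies inside $Q$ and the quotient by $N$ is still $A$ -- for instance by augmenting the generating set of $A$ with lifts of its torsion relations, or by directly tuning the initial surjection so that saturation holds.

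The main obstacle is the cohomological condition: the surjectivity $T(F) \to (T/Z)(F)$ is not automatic from the algebraic construction, and concrete examples (e.g., $Z = \mu_n$ embedded in an induced torus) show that naive choices can fail. I would address this by an iterative enlargement procedure: given the cokernel $C := (T/Z)(F)/\operatorname{im}(T(F))$, which is a finite group embedding into $H^1(F,Z)$, choose an induced torus $T^{\sharp}$ with a surjection $T^{\sharp} \twoheadrightarrow T/Z$ arranged so that replacing $T$ by the fiber product $T \times_{T/Z} T^{\sharp}$ strictly shrinks the cokernel; one verifies that this new torus still contains $Z$ (embedded via the first coordinate) and that $(T \times_{T/Z} T^{\sharp})/Z \cong T^{\sharp}$ is induced. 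Since $H^1(F,Z)$ is finite, the iteration terminates. An alternative and cleaner approach would invoke local Tate-Nakayama duality to reformulate the cohomological bijection as a matching of finite Tate cohomology groups $\hat H^{\bullet}(G, X_*)$ of cocharacter modules, which can be engineered directly by suitable choice of the initial permutation modules $Q$ and $P$; the core difficulty is ensuring that this matching and the induced-quotient property can be simultaneously achieved.
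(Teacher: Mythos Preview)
Your fiber-product idea is the right one and is exactly what the paper does, but you have left the decisive step unspecified and have overcomplicated the preliminary construction.

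First, the detour through character modules is unnecessary. The paper simply embeds $Z$ into \emph{any} $F$-torus $T_0$ (for instance the one coming from a surjection of a free $\Z[\Gamma_{E/F}]$-module onto $X^*(Z)$) and sets $C_0 = T_0/Z$. There is no need to arrange that $T_0/Z$ be induced at this stage, nor to worry about saturation of sublattices; all of that is bypassed by the fiber-product step that follows.

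Second, your phrase ``arranged so that replacing $T$ by the fiber product \ldots\ strictly shrinks the cokernel'' hides the entire content of the proof. You give no mechanism for producing such a $T^\sharp$, and without one the iteration is vacuous. The paper's construction is concrete and one-step: take $K_1$ to be the splitting field of $T_0$, set $C = \tx{Res}_{K/F}(C_0 \times_F K)$ for a finite extension $K/K_1$ to be chosen, and form $T = T_0 \times_{C_0} C$ using the norm map $N_{K/F} : C \to C_0$. Then $T/Z \cong C$ is induced, and the obstruction to injectivity of $H^1(\Gamma,Z) \to H^1(\Gamma,T)$ is the composite
\[
C(F) = C_0(K) \xrightarrow{N_{K/K_1}} C_0(K_1) \xrightarrow{N_{K_1/F}} C_0(F) \to H^1(F,Z).
\]
After fixing an isomorphism $C_0 \times K_1 \cong \mb{G}_m^n$, each coordinate gives a continuous map $K_1^\times \to H^1(F,Z)$ with finite target, hence open kernel, hence a norm subgroup by local class field theory. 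Taking $K$ to correspond to the intersection of these norm subgroups kills the composite outright. This norm-trick is the idea your sketch is missing; once you have it, no iteration is needed.
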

\begin{proof}
Let $Z \rightarrow T_0$ be an embedding of $Z$ into an arbitrary $F$-torus $T_0$ and let $C_0$ be the cokernel that embedding. Let $K_1/F$ be the splitting extension of $T_0$, and let $K/K_1$ be an extension which we will specify in a moment. Put $C=\tx{Res}_{K/F}(C_0 \times K)$. Since $C_0 \times K$ is split, $C$ is
induced. Let $T$ be the fiber product of $T_0$ and $C$ over $C_0$. This is a diagonalizable group and a quick look at its character module reveals that it is in fact a torus. We obtain the diagram
\[ \xymatrix{
1\ar[r]&Z\ar[r]\ar@{=}[d]&T\ar[r]\ar[d]&C\ar[r]\ar[d]^{N_{K/F}}&1\\
1\ar[r]&Z\ar[r]&T_0\ar[r]&C_0\ar[r]&1\\
} \]
where $N_{K/F}$ is the norm map. Since $C$ is induced, $H^1(F,C)$ vanishes and hence the natural map $H^1(F,Z) \rightarrow H^1(F,T)$ is surjective (for any
choice of $K$). We claim that we can choose $K$ in such a way that this map is also injective. This is equivalent to demanding that the map
\[ C(F)\stackrel{N_{K/F}}{\lrw} C_0(F) \lrw H^1(F,Z) \]
be trivial. We split this map as follows
\[ C(F) = C_0(K) \stackrel{N_{K/K_1}}{\lrw} C_0(K_1) \stackrel{N_{K_1/F}}{\lrw} C_0(F) \rw H^1(F,Z). \]

Fix an isomorphism $[\mb{G}_{m,K_1}]^n \rw C_0 \times K_1$. Then we have
\[ \xymatrix{
C_0(K)\ar[r]^{N_{K/K_1}}&C_0(K_1)\ar[r]^{N_{K_1/F}}&C_0(F)\ar[r]&H^1(F,Z)\\
[K^\times]^n\ar[r]^{[N_{K/K_1}]^n}\ar[u]&[K_1^\times]^n\ar[u]
} \]
If $i_k$ denotes the inclusion of the $k$-th coordinate, then the map
\[ K_1^\times \stackrel{i_k}{\lrw} [K_1^\times]^n \rw H^1(F,Z) \]
is continuous and its target is finite, so its kernel is a norm subgroup of $K_1^\times$. The intersection of these norm subgroups for $1 \leq k \leq n$ is
again a norm subgroup, and we choose $K$ to be the corresponding abelian extension of $K_1$.

We have thus shown the existence of an extension $K/K_1$ for which the canonical map $H^1(F,Z) \rw H^1(F,T)$ is bijective and this completes the proof of the proposition. For the next corollary it will be useful to know that there is a natural choice for the extension $K$ once $T_0$ has been fixed. Let $\Theta$ be the preimage in $C_0(K_1)$ of the kernel of $C_0(F) \rw H^1(F,Z)$. For any two extensions $K,K'$ of $K_1$ we have $\tx{im}(N_{K \cap K'/K_1})=\tx{im}(N_{K/K_1}) \cdot \tx{im}(N_{K'/K_1})$. Thus the set of extensions $K/K_1$ for which $\tx{im}(N_{K/K_1}) \subset \Theta$ has a smallest element, namely their intersection.

\end{proof}

\begin{cor} \label{cor:zembex}
Any connected reductive $F$-group $G$ has a $z$-embedding. Moreover, if $G$ is quasi-split, there is a canonical choice for it.
\end{cor}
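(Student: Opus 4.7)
The plan is to deduce both statements from Proposition \ref{pro:zembex} applied to $Z=Z(G)$, and then to form $G_z$ as the pushout of $G$ and the resulting torus along $Z(G)$.

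First I would apply Proposition \ref{pro:zembex} to $Z(G)$ to obtain an embedding $Z(G)\hookrightarrow T$ into an $F$-torus with $T/Z(G)$ induced and $H^1(F,Z(G))\rw H^1(F,T)$ bijective. I then set
\[ G_z \;:=\; (G\times T)\big/Z(G), \]
where $Z(G)$ is embedded anti-diagonally via $z\mapsto(z,z^{-1})$. The quotient is by a central subgroup, so $G_z$ is a connected reductive $F$-group, and $g\mapsto[g,1]$ is an embedding $G\hookrightarrow G_z$.

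Next I would check the four defining conditions directly. The map $[g,t]\mapsto t\bmod Z(G)$ identifies $G_z/G$ with $T/Z(G)$, which is an induced torus, covering conditions (1), (5), and (by vanishing of $H^1$ for induced tori) also (2). A short computation shows that $Z(G_z)$ is the image of $Z(G)\times T$, which is carried isomorphically onto $T$ via $t\mapsto[1,t]$; this torus is connected, giving condition (4). Moreover, under this identification the central exact sequence $1\rw Z(G)\rw Z(G_z)\rw T/Z(G)\rw 1$ becomes exactly the sequence produced by Proposition \ref{pro:zembex}, so the $H^1$-bijection required in condition (3) is immediate.

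For the canonical choice when $G$ is quasi-split, the concluding remark of the proof of Proposition \ref{pro:zembex} shows that once the ambient torus $T_0\supseteq Z(G)$ is fixed, the minimal extension $K/K_1$, and hence the torus $T$, is canonically determined. It therefore suffices to specify $T_0$ canonically from the quasi-split structure. My plan is to take $T_0$ to be the maximal torus appearing in an $F$-pinning of $G$, which exists precisely because $G$ is quasi-split. Any two such choices differ by conjugation by a unique element of $G_\tx{ad}(F)$, which acts trivially on $Z(G)$; so the pair $(Z(G)\hookrightarrow T_0)$ is canonical up to a unique $F$-isomorphism fixing $Z(G)$ pointwise, and this transport through the pushout construction then yields a canonical $G_z$. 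The only delicate point is verifying that this transport really does produce canonical data on $G_z$, which reduces to the elementary observation that the ambiguity in $T_0$ acts trivially on the center and hence does not disturb the pushout.
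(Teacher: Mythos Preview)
Your proposal is correct and follows essentially the same approach as the paper: apply Proposition \ref{pro:zembex} to $Z(G)$, form the pushout $G_z=(G\times T)/Z(G)$, and verify the axioms. For the canonical choice the paper simply takes $T_0$ to be ``the minimal Levi subgroup of $G$'', which for a quasi-split group is exactly a maximal torus contained in an $F$-Borel, i.e.\ the maximal torus of an $F$-pinning; your more careful justification that the $G_\tx{ad}(F)$-ambiguity acts trivially on $Z(G)$ and hence on the pushout is a welcome elaboration of a point the paper leaves implicit.
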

\begin{proof}
Apply Proposition \ref{pro:zembex} to the diagonalizable group $Z(G)$ to obtain an embedding $Z(G) \rw T$. Form the push-out
\[\xymatrix{
Z(G)\ar[r]\ar[d]&G\ar[d]\\
T\ar[r]&G_z
} \]
The maps $T \rightarrow G_z$ and $G \rightarrow G_z$ are injective, because $Z(G) \rightarrow G$ and $Z(G) \rightarrow T$ are. Moreover, the injection $T
\rightarrow G_z$ identifies $T$ with $Z(G_z)$, and
\[ \tx{coker}(G \rightarrow G_z) = \tx{coker}(Z(G) \rightarrow T) = C. \]

Assume now that $G$ is quasi-split. In the proof of Proposition \ref{pro:zembex} we made two choices -- that of the torus $T_0$ and of the field extension $K$. We already showed in that proof that there is in fact a natural choice for $K$. For $T_0$ we can now take the minimal Levi subgroup of $G$.
\end{proof}

\subsubsection{Basic properties}

Let $1 \rw G \rw G_z \rw C \rw 1$ be a pseudo-$z$-embedding.

\begin{fct} \label{fct:zin_tran} If $G_z \rw G_x$  is a pseudo-$z$-embedding, then so is $G \rw G_x$.
\end{fct}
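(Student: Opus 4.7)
The plan is to verify the three defining conditions of a pseudo-$z$-embedding for the composite $G \hookrightarrow G_x$. First I would check that $G$ is normal in $G_x$, so that the quotient makes sense. Because $G_z/G$ and $G_x/G_z$ are tori (hence abelian), we have $G_{z,\tx{der}} \subset G$ and $G_{x,\tx{der}} \subset G_z$, and the latter in fact lies in $G_{z,\tx{der}} \subset G$. Since $G_x/G_{x,\tx{der}}$ is abelian and $G \supset G_{x,\tx{der}}$, this forces $G$ to be normal in $G_x$.

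For condition (1), I would invoke the exact sequence of algebraic groups
\[ 1 \rw G_z/G \rw G_x/G \rw G_x/G_z \rw 1. \]
Its outer terms are tori by hypothesis, so $G_x/G$ is a connected commutative affine algebraic group presented as an extension of a torus by a torus, hence a torus itself. For condition (2), I would take the associated long exact sequence in Galois cohomology: the set $H^1(F, G_x/G)$ is sandwiched between $H^1(F, G_z/G)$ and $H^1(F, G_x/G_z)$, both of which vanish by the pseudo-$z$-embedding assumptions.

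For condition (3), I first need $Z(G) \subset Z(G_x)$. This will come in two steps, using the following observation: if $H \hookrightarrow H'$ is a pseudo-$z$-embedding, then every element of $H'$ factors as an element of $Z(H')^\circ$ times an element of $H'_\tx{der} \subset H$, and both factors commute with $Z(H)$. Applying this to $G \hookrightarrow G_z$ and then to $G_z \hookrightarrow G_x$ gives the chain $Z(G) \subset Z(G_z) \subset Z(G_x)$. By functoriality, the natural map $H^1(F, Z(G)) \rw H^1(F, Z(G_x))$ then factors as the composition of the two bijections $H^1(F, Z(G)) \rw H^1(F, Z(G_z))$ and $H^1(F, Z(G_z)) \rw H^1(F, Z(G_x))$, and is therefore itself a bijection.

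The proof is essentially formal, and I do not anticipate a significant obstacle; the mildly delicate point is the inclusion of centers needed to make the functorial composition in condition (3) meaningful, but this follows from the standard reductive decomposition $H' = Z(H')^\circ \cdot H'_\tx{der}$ together with $H'_\tx{der} \subset H$.
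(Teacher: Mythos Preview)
Your proof is correct and follows essentially the same approach as the paper: both argue that $G_x/G$ is an extension of a torus by a torus (hence a torus with vanishing $H^1$) and that the map on $H^1$ of centers factors as a composite of the two given bijections. You have simply filled in two details the paper leaves implicit---the normality of $G$ in $G_x$ and the inclusion $Z(G) \subset Z(G_x)$---and your justifications for these are sound.
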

\begin{proof}
The map $H^1(F,Z(G)) \rw H^1(F,Z(G_x))$ is the composition of the bijections $H^1(F,Z(G)) \rw H^1(F,Z(G_z))$ and $H^1(F,Z(G_z)) \rw H^1(F,Z(G_x))$ and thus itself bijective. Moreover, the reductive group $G_x/G$ is an extension of the torus $G_x/G_z$ by the torus $C$ and hence itself a torus with $H^1(F,G_x/G)=1$.
\end{proof}

\begin{fct} \label{fct:surj} The map $Z(G_z)(F) \rightarrow C(F)$ is surjective and $G_z(F) = Z(G_z)(F) \cdot G(F)$.
\end{fct}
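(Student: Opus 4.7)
The plan is to reduce both claims to a single short exact sequence of algebraic groups
\[
1 \rw Z(G) \rw Z(G_z) \rw C \rw 1
\]
and then feed it into Galois cohomology, using the third defining axiom of a pseudo-$z$-embedding.

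First I would establish this exact sequence. Since $G_z/G = C$ is a torus, it is abelian, so $G_{z,\tx{der}} \subset G$. Because $G_z$ is connected reductive we have $G_z = Z(G_z)^\circ \cdot G_{z,\tx{der}}$, and combining these two inclusions gives the equality of algebraic groups $G_z = Z(G_z) \cdot G$. From this I would deduce that $Z(G_z) \cap G = Z(G)$: the inclusion $Z(G) \subset Z(G_z)$ follows because $Z(G)$ commutes with $G$ by definition and with $Z(G_z)$ for free, hence with all of $G_z = Z(G_z) \cdot G$; conversely, an element of $G$ that is central in $G_z$ automatically centralizes $G$ and hence lies in $Z(G)$. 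Consequently the map $Z(G_z) \rw C = G_z/G$ has scheme-theoretic kernel $Z(G)$, and it is surjective because $Z(G_z) \cdot G = G_z$ maps onto $C$. This gives the short exact sequence of $F$-groups displayed above.

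Next I would pass to the long exact sequence in Galois cohomology
\[
Z(G_z)(F) \lrw C(F) \lrw H^1(F,Z(G)) \lrw H^1(F,Z(G_z)).
\]
Condition~(3) in the definition of a pseudo-$z$-embedding says that the last arrow is a bijection, hence injective, so by exactness the connecting map $C(F) \rw H^1(F,Z(G))$ is trivial. Therefore $Z(G_z)(F) \rw C(F)$ is surjective, which is the first assertion of the fact.

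Finally, for the second assertion I would feed this surjectivity into the long exact sequence attached to $1 \rw G \rw G_z \rw C \rw 1$. Given $g \in G_z(F)$, let $c \in C(F)$ be its image. By the previous paragraph choose $z \in Z(G_z)(F)$ with image $c$; then $z^{-1}g \in G_z(F)$ maps to $1 \in C(F)$, so by exactness it lies in the image of $G(F) \rw G_z(F)$, which is injective. Thus $g \in Z(G_z)(F) \cdot G(F)$, proving $G_z(F) = Z(G_z)(F) \cdot G(F)$. The main conceptual step is the identification $Z(G_z) \cap G = Z(G)$ as algebraic groups; the rest is bookkeeping with the two associated long exact sequences.
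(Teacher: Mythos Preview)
Your proof is correct and follows essentially the same approach as the paper: both establish the exact sequence $1 \rw Z(G) \rw Z(G_z) \rw C \rw 1$ and deduce the first claim from the injectivity of $H^1(F,Z(G)) \rw H^1(F,Z(G_z))$ via the long exact sequence. For the second claim the paper instead uses the exact sequence $1 \rw Z(G) \rw Z(G_z) \times G \rw G_z \rw 1$ and argues directly that $Z(G_z)(F) \times G(F) \rw G_z(F)$ is surjective, while you feed the already-proven surjectivity of $Z(G_z)(F) \rw C(F)$ into the sequence $1 \rw G \rw G_z \rw C \rw 1$; these are minor rearrangements of the same ingredients.
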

\begin{proof} The injectivity of $H^1(F,Z(G)) \rightarrow H^1(F,Z(G_z))$ implies the first point. For the second, we note that $G_{z,\tx{der}} \subset G$ and hence we have an exact sequence
\[ 1 \rightarrow Z(G) \rightarrow Z(G_z) \times G \rightarrow G_z \rightarrow 1. \]
The surjectivity of $Z(G_z)(F) \times G(F) \rightarrow G_z(F)$ is equivalent to the injectivity of $H^1(F,Z(G)) \rightarrow H^1(F,Z(G_z)) \times H^1(F,G)$,
which in turn follows from the injectivity of $H^1(F,Z(G)) \rightarrow H^1(F,Z(G_z))$.
\end{proof}

It follows from this fact that, if $\pi_z$ is an irreducible representation of $G_z(F)$, then its restriction $\pi$ to $G(F)$ is still irreducible. Conversely any irreducible representation
$\pi$ of $G(F)$ can be extended to an irreducible representation $\pi_z$ of $G_z(F)$ -- for this one needs to choose an extension $\omega_z : Z(G_z)(F)
\rightarrow \C^\times$ of the central character $\omega$ of $\pi$. Then $\pi \otimes \omega_z$ is a representation of $G(F) \times Z(G_z)(F)$ which factors
through the surjection $G(F) \times Z(G_z)(F) \rightarrow G_z(F)$. The set of extensions of $\pi$ to a representation of $G_z(F)$ is a torsor
for $C(F)^D$.

\begin{fct} \label{fct:zin_filt}
If $G \rw G_1$ and $G \rw G_2$ are pseudo-$z$-embeddings, then there exists a connected reductive group $G_3$ with embeddings $G_1 \rw G_3$ and $G_2 \rw G_3$ that are both pseudo-$z$-embeddings.
\end{fct}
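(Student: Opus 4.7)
The plan is to construct $G_3$ as a pushout of $G_1$ and $G_2$ over $G$, but amalgamated only through the centers, since the naive pushout in the category of groups would collapse too much. The key preliminary observation is that, because $C_i := G_i/G$ is a torus, the derived group $G_{i,\tx{der}}$ is contained in $G$, so $G_i = G \cdot Z(G_i)^\circ$, and consequently $Z(G) \subset Z(G_i)$ for $i = 1, 2$. I then form
\[
G_3 := (G_1 \times Z(G_2))/Z(G),
\]
where $Z(G)$ is embedded antidiagonally via $z \mapsto (z, z^{-1})$. The image is a closed central subgroup of the connected reductive group $G_1 \times Z(G_2)$, so $G_3$ is connected reductive over $F$. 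One checks immediately that $g_1 \mapsto [g_1, 1]$ defines an injective $F$-homomorphism $G_1 \hookrightarrow G_3$ and, using $G_2 = G \cdot Z(G_2)$, the assignment $g \cdot z_2 \mapsto [g, z_2]$ defines an injective $F$-homomorphism $G_2 \hookrightarrow G_3$ that restricts to the given map on $G$.

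Next I verify the three axioms of a pseudo-$z$-embedding for $G_1 \hookrightarrow G_3$; the case $G_2 \hookrightarrow G_3$ is symmetric, since $G_3$ admits the dual description $(G_2 \times Z(G_1))/Z(G)$. For axiom (1), a direct computation of the preimage of the image of $G_1$ in $G_1 \times Z(G_2)$ yields $G_3/G_1 = Z(G_2)/Z(G)$, which equals $G_2/G = C_2$ (a torus). Axiom (2) then follows from $H^1(F, C_2) = 1$, which is part of the hypothesis that $G \hookrightarrow G_2$ is a pseudo-$z$-embedding. For (3), one first identifies $Z(G_3) = T := (Z(G_1) \times Z(G_2))/Z(G)$: an element $[g_1, z_2]$ is central in $G_3$ iff $g_1$ commutes with all of $G_1$, forcing $g_1 \in Z(G_1)$. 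From the short exact sequence
\[
1 \rw Z(G_1) \rw T \rw C_2 \rw 1,
\]
the long exact cohomology sequence reads
\[
T(F) \rw C_2(F) \rw H^1(F, Z(G_1)) \rw H^1(F, T) \rw H^1(F, C_2) = 1.
\]
The right-hand vanishing gives surjectivity of $H^1(F, Z(G_1)) \rw H^1(F, T)$, and injectivity reduces to the surjectivity of $T(F) \rw C_2(F)$; but $T$ contains $Z(G_2)$, and $Z(G_2)(F) \rw C_2(F)$ is surjective by Fact \ref{fct:surj} applied to the pseudo-$z$-embedding $G \hookrightarrow G_2$.

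The main obstacle is conceptual rather than computational: one must recognize that a naive pushout of $G_1$ and $G_2$ over $G$ in the category of groups collapses $G$ to its abelianization, so the amalgamation must take place only over the centers (where $G_1$ and $G_2$ sit compatibly thanks to $Z(G) \subset Z(G_i)$). Once the construction above is written down, each axiom reduces to elementary manipulations with short exact sequences of diagonalizable groups, together with Fact \ref{fct:surj}.
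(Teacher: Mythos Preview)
Your proof is correct and follows essentially the same approach as the paper: the construction $G_3=(G_1\times Z(G_2))/Z(G)$ is identical, as are the maps $G_i\hookrightarrow G_3$ and the verification that each is a pseudo-$z$-embedding via $H^1(F,C_j)=1$ and the surjectivity of $Z(G_j)(F)\rw C_j(F)$ from Fact~\ref{fct:surj}. You are somewhat more explicit than the paper in identifying $Z(G_3)$ and in invoking the symmetric description $(G_2\times Z(G_1))/Z(G)$, but the substance is the same.
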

\begin{proof}
We construct $G_3$ as the push-out $G_1 \times_{Z(G)} Z(G_2)$, i.e. the quotient of $G_1 \times Z(G_2)$ by the subgroup $\{ (z,z^{-1})| z \in Z(G)\}$. The map $G_1 \rw G_3$ given by $g \mapsto (g,1)$ is injective and its cokernel is $Z(G_2)/Z(G)=G_2/G$. It is a pseudo-$z$-embedding, because $H^1(F,G_2/G)=1$ and $Z(G_2)(F) \rw [G_2/G](F)$ is surjective. The map $G_2 \rw G_3$ given by writing an element $g_2 \in G_2$ as a product $g \cdot z_2$ with $g \in G$ and $z_2 \in Z(G_2)$ and mapping it to $(g,z_2) \in G_3$ is well-defined and injective, and its cokernel is $G_1/G$. This map is also a pseudo-$z$-embedding because $H^1(F,G_1/G)=1$ and $Z(G_1)(F) \rw [G_1/G](F)$ is surjective.
\end{proof}

\begin{fct} \label{fct:zin_tor}
We have mutually inverse bijections
\begin{eqnarray*}
\left\{\textrm{maximal tori of } G\right\}&\leftrightarrow&\{\textrm{maximal tori of }G_z\}\\
T&\mapsto&Z(G_z)^\circ\cdot T\\
(T_z \cap G)^\circ&\mapsfrom&T_z
\end{eqnarray*}
\end{fct}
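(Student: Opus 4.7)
The plan is to exploit one structural observation that drives everything: since $G_z/G$ is abelian, we have $G_{z,\mathrm{der}}\subset G$, so $Z(G_z)^\circ$ surjects onto the abelianization $G_z/G_{z,\mathrm{der}}$ and, a fortiori, onto $G_z/G$. Consequently $G_z = Z(G_z)^\circ\cdot G$, and any maximal torus of $G_z$ must contain the central subgroup $Z(G_z)^\circ$ and therefore already surjects onto $G_z/G$ via $G_z\twoheadrightarrow G_z/G$. A second observation, used throughout, is that $Z(G_z)^\circ\cap G$ centralizes $G$, hence lies in $Z(G)$, and therefore in every maximal torus of $G$.

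For the forward map $T\mapsto T_z := Z(G_z)^\circ\cdot T$, the product is an $F$-subtorus of $G_z$ since both factors are $F$-tori that commute. By the second observation, $Z(G_z)^\circ\cap T = Z(G_z)^\circ\cap G$, and combined with the surjection above this gives
\[
\dim T_z \;=\; \dim Z(G_z)^\circ + \mathrm{rk}(G) - \dim(Z(G_z)^\circ\cap G) \;=\; \mathrm{rk}(G)+\dim(G_z/G)\;=\;\mathrm{rk}(G_z),
\]
so $T_z$ is a maximal $F$-torus of $G_z$. For the backward map $T_z\mapsto (T_z\cap G)^\circ$, surjectivity of $T_z\twoheadrightarrow G_z/G$ forces $\dim(T_z\cap G)=\mathrm{rk}(G_z)-\dim(G_z/G)=\mathrm{rk}(G)$, so $(T_z\cap G)^\circ$ is an $F$-subtorus of $G$ of maximal rank.

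It remains to check the round trips. Starting from $T$: clearly $T\subset (Z(G_z)^\circ\cdot T)\cap G$, and conversely if $zt\in G$ with $z\in Z(G_z)^\circ$ and $t\in T$ then $z=(zt)t^{-1}\in Z(G_z)^\circ\cap G\subset T$, whence $zt\in T$; since $T$ is connected this gives $((Z(G_z)^\circ\cdot T)\cap G)^\circ=T$. Starting from $T_z$: the connected subtorus $Z(G_z)^\circ\cdot(T_z\cap G)^\circ$ of $T_z$ has dimension $\mathrm{rk}(G_z)=\dim T_z$ by the same count as before (using that $(Z(G_z)^\circ\cap G)^\circ\subset (T_z\cap G)^\circ$, since $Z(G_z)^\circ\cap G\subset Z(G)$ lies in every maximal torus of $G$), and hence coincides with $T_z$.

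I do not anticipate a serious obstacle; the only slightly delicate bookkeeping is ensuring that the intersections of $Z(G_z)^\circ$ with the various tori agree up to finite index, and this is handled uniformly by the containment $Z(G_z)^\circ\cap G\subset Z(G)$.
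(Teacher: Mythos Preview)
Your proof is correct and rests on exactly the same observation the paper records: since $G_z/G$ is a torus, one has $G_{z,\mathrm{der}}\subset G$. The paper's proof consists of that single line and leaves the verification to the reader; you have supplied the details (the decomposition $G_z=Z(G_z)^\circ\cdot G$, the containment $Z(G_z)^\circ\cap G\subset Z(G)$, and the dimension counts) that make the bijection explicit.
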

\begin{proof}
This follows from $G_{z,\tx{der}} \subset G$.
\end{proof}
\begin{fct} \label{fct:zin_cohbij}
Let $Z \subset G$ be a finite central subgroup. The natural map
\[ H^1(u \rw W,Z \rw G) \rw H^1(u \rw W,Z \rw G_z) \]
is bijective. If $T$ and $T_z$ correspond under the bijection of Fact \ref{fct:zin_tor}, then the natural map
\[ H^1(u \rw W,Z \rw T) \rightarrow H^1(u \rw W,Z \rw T_z) \]
is bijective.
\end{fct}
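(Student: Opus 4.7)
Both assertions will be deduced from the short exact sequence
\[ 1 \rw G \rw G_z \rw C \rw 1 \]
and its torus analogue $1 \rw T \rw T_z \rw C \rw 1$, combined with the pseudo-$z$-embedding property $H^1(F,C)=1$. The first step is to check that the two quotients coincide. Since $G_z/G$ is a torus, $G_{z,\tx{der}}\subset G$, so $G_z=G\cdot Z(G_z)^\circ$, giving $C=Z(G_z)^\circ/A$ with $A:=Z(G_z)^\circ\cap G$. Any element of $Z(G_z)^\circ\cap T$ lies in $A$; conversely $A\subset Z(G)\subset T$, so $A=Z(G_z)^\circ\cap T$. Using Fact \ref{fct:zin_tor} this yields $T_z/T=Z(G_z)^\circ\cdot T/T=Z(G_z)^\circ/A=C$.

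\textbf{Torus case.} Here everything is abelian, so the Weil-group long exact sequence reads
\[ T_z(F) \rw C(F) \rw H^1(W,T) \rw H^1(W,T_z) \rw H^1(W,C), \]
and I restrict to cohomology classes whose $u$-restriction lies in $Z$. Since $Z\subset T$ maps trivially into $C$, any class in $H^1(u\rw W,Z\rw T_z)$ has image in $H^1(W,C)$ trivial on $u$, and inflation-restriction places this image in $H^1(\Gamma,C)=H^1(F,C)=0$. On the left end, $Z(G_z)(F)\subset T_z(F)$ combined with Fact \ref{fct:surj} makes $T_z(F)\rw C(F)$ surjective. These two inputs force bijectivity.

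\textbf{Group case and main obstacle.} The key technical point is to adapt the argument to the non-abelian sequence, where cocycle representatives must be chosen carefully. For surjectivity, given $[x_z]\in H^1(u\rw W,Z\rw G_z)$, the same inflation-restriction argument shows its image in $H^1(W,C)$ is trivial, producing a lift $[y]\in H^1(W,G)$ and an element $g\in G_z(\ol F)$ with $y(w)=g^{-1}x_z(w)\sigma_w(g)$. Because $u$ acts trivially on $G_z(\ol F)$ and $Z\subset Z(G)\subset Z(G_z)$ (the last inclusion using $G_z=G\cdot Z(G_z)^\circ$), conjugation by $g$ fixes the values of $x_z$ on $u$, so $y|_u=x_z|_u$ takes values in $Z$. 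For injectivity, two classes $[x_1],[x_2]\in H^1(u\rw W,Z\rw G)$ agreeing in $H^1(u\rw W,Z\rw G_z)$ give $g\in G_z(\ol F)$ with $x_2(w)=g^{-1}x_1(w)\sigma_w(g)$; the image $\bar g\in C(\ol F)$ is $W$-fixed, hence in $C(F)$, and Fact \ref{fct:surj} lifts it to $h\in Z(G_z)(F)$. Then $gh^{-1}\in G(\ol F)$, and since $h$ is central in $G_z$ and $W$-fixed, $(gh^{-1})^{-1}x_1\sigma_w(gh^{-1})=hx_2h^{-1}=x_2$, proving $[x_1]=[x_2]$ in $H^1(u\rw W,Z\rw G)$. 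The delicate point is that every step genuinely depends on the pseudo-$z$-embedding axioms: the vanishing $H^1(F,C)=0$, the decomposition $G_z(F)=Z(G_z)(F)\cdot G(F)$ of Fact \ref{fct:surj}, and the central inclusion $Z\subset Z(G_z)$.
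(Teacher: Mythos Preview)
Your proof is correct and follows essentially the same route as the paper: both arguments use the long exact sequence in $W$-cohomology for $1 \rw G \rw G_z \rw C \rw 1$ (resp.\ its torus analogue), invoke the vanishing $H^1(F,C)=0$ for surjectivity, and the surjectivity $Z(G_z)(F)\rw C(F)$ from Fact~\ref{fct:surj} for injectivity. The paper is terser, simply writing the five-term sequence with $H^1(\Gamma,C)$ on the right and asserting that the non-abelian case is handled ``the same way''; you spell out the cocycle manipulations explicitly for $G$, which is a genuine service since the long exact sequence is only of pointed sets there. Your preliminary verification that $T_z/T\cong C$ and that $Z\subset Z(G_z)$ are both correct and fill in details the paper takes for granted.
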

\begin{proof}
We will discuss the second map, the argument for the first being the same. From the long exact sequence for $W$-cohomology we obtain the exact sequence
\[ T_z(F) \rw C(F) \rw H^1(u \rw W,Z \rw T) \rw H^1(u \rw W,Z \rw T_z) \rw H^1(\Gamma,C). \]
The surjectivity of $H^1(u \rw W,Z \rw T) \rightarrow H^1(u \rw W,Z \rw T_1)$ follows from the vanishing of $H^1(\Gamma,C)$ and its injectivity is a consequence of the
surjectivity of $T_z(F) \rightarrow C(F)$, which follows from the surjectivity of $Z(G_z)(F) \rightarrow C(F)$ stated in Fact \ref{fct:surj}.
\end{proof}

\begin{fct} \label{fct:zin_twist}
Let $\psi : G \rightarrow G'$ be an inner twist and $u \in Z^1(F,G_\tx{ad})$ be the element s.t. $\psi^{-1}\sigma(\psi) = \tx{Ad}(u(\sigma))$. Then
there exists a connected reductive group $G_z'$ and an inner twist $\psi_z : G_z \rightarrow G_z'$ s.t. $\psi_z^{-1}\sigma(\psi_z) = \tx{Ad}(u(\sigma))$
fitting into the diagram
\[ \xymatrix{
1\ar[r]&G\ar[r]\ar[d]^{\psi}&G_z\ar[r]\ar[d]^{\psi_z}&C\ar[r]\ar@{=}[d]&1\\
1\ar[r]&G'\ar[r]&G_z'\ar[r]&C\ar[r]&1\\
} \]
\end{fct}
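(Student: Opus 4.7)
The plan is to construct $G_z'$ by twisting $G_z$ via the very same cocycle $u$. The key observation is that $G_{z,\tx{der}}\subset G$ (since $G_z/G=C$ is a torus, every commutator in $G_z$ lies in $G$), so we obtain a canonical identification $G_{z,\tx{ad}}=G_z/Z(G_z)=G/(G\cap Z(G_z))=G/Z(G)=G_\tx{ad}$. In particular, $u$ can be viewed interchangeably as a 1-cocycle with values in $G_\tx{ad}$ or in $G_{z,\tx{ad}}$, and $\Ad(u(\sigma))$ is a well-defined automorphism of $G_z$ preserving $G$.

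I would then define $G_z'$ as the $F$-form of $G_z$ obtained by twisting: as a $\bar F$-group it is $G_z$, but with the new Galois action $\sigma \star g := \Ad(u(\sigma))(\sigma(g))$. The cocycle identity $u(\sigma\tau)=u(\sigma)\sigma(u(\tau))$ in $G_{z,\tx{ad}}$ together with the fact that $\Ad$ factors through $G_{z,\tx{ad}}$ shows that $\star$ is a group action by $F$-semilinear automorphisms, so $G_z'$ is a connected reductive group over $F$. Define $\psi_z:G_z\rw G_z'$ to be the identity on $\bar F$-points. By construction
\[ \sigma(\psi_z)(g)=\sigma_{G_z'}\bigl(\psi_z(\sigma_{G_z}^{-1}(g))\bigr)=\Ad(u(\sigma))(g), \]
so $\psi_z^{-1}\sigma(\psi_z)=\Ad(u(\sigma))$, as demanded.

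It remains to verify that the new $F$-structure on $G_z$ restricts to the given $F$-structure on $G'$ and quotients to the given $F$-structure on $C$. The first holds because the restriction of $\star$ to $G$ is literally $\Ad(u(\sigma))\circ\sigma$, which by hypothesis is the Galois action defining $G'$; thus $G\hrw G_z$ becomes $G'\hrw G_z'$ over $F$, and $\psi_z$ restricts to $\psi$. The second holds because $\Ad(u(\sigma))$ acts trivially on the abelian quotient $G_z/G=C$, so $\star$ and the original action agree on $C$; hence the projection $G_z'\thrw C$ is $F$-rational and the resulting $F$-structure on $C$ is the original one. These two compatibilities together give the commutative diagram with exact rows over $F$.

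The main obstacle is purely bookkeeping: making sure that $u$ is being used consistently in $G_\tx{ad}=G_{z,\tx{ad}}$ and that the inner twist $\Ad(u(\sigma))$ of $G_z$ really descends the way we need on both the subgroup $G$ and the quotient $C$. Once the identification $G_{z,\tx{ad}}=G_\tx{ad}$ is established and the standard twisting construction is applied, everything else is a direct computation and no further cohomological input is required.
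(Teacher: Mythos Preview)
Your proof is correct. The paper takes a slightly different route: rather than identifying $G_{z,\tx{ad}}$ with $G_\tx{ad}$ and twisting $G_z$ directly by the cocycle, it constructs $G_z'$ as the push-out of $Z(G_z)$ and $G'$ over $Z(G)$ (using that $\psi|_{Z(G)}$ is defined over $F$, since inner automorphisms fix the center), and then observes that $\tx{id}\times\psi : Z(G_z)\times G \rw Z(G_z)\times G'$ descends to the desired $\psi_z$. Your approach makes the cocycle viewpoint transparent and yields $\psi_z^{-1}\sigma(\psi_z)=\Ad(u(\sigma))$ immediately; the paper's push-out construction instead mirrors the way $G_z$ was built from $G$ and makes the bottom exact row automatic. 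The two constructions are canonically isomorphic and both are one-line arguments once set up.
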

\pf We construct $G_z'$ as the push-out
\[ \xymatrix{
Z(G)\ar[r]\ar[d]^\psi&Z(G_z)\ar[d]\\
G'\ar[r]&G'_z
} \]
Then the map $\tx{id} \times \psi : Z(G_z) \times G \rightarrow Z(G_z) \times G'$ descends to a map $\psi_z : G_z \rightarrow G'_z$ which clearly has the
desired property.\qed

We now consider the dual side. We have the exact sequence
\begin{equation} \label{eq:dex} 1 \rw \hat C \rw \hat{G_z} \rw \hat G \rw 1. \end{equation}
Let $\Psi^+(G)$ be the set of $\hat G$-conjugacy classes of admissible $L$-homomorphisms $L_F \times \tx{SL}_2 \rw {^LG}$. This set contains all Langlands parameters (those homomorphisms that are trivial on $\tx{SL}_2$) as well as the set of Arthur parameters (those homomorphisms whose restriction to $L_F$ projects to a relatively bounded subset of $\hat G$). We will also interpret $\Psi^+(G)$ as a subset of $H^1(L_F \times \tx{SL}_2,\hat G_z)$ via the projection $^LG_z \rw \hat G_z$.

\begin{lem} \label{lem:zin_dex} The three maps
\[ H^1(W_F,\hat C)\rw H^1(W_F,Z(\hat{G_z})) \rw
H^1(W_F,\hat{G_z}) \rw\Psi^+(G_z)
\]
are injective.
\end{lem}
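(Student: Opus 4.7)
The plan is to handle the three maps from right to left, since the rightmost is essentially formal while the leftmost requires duality with the pseudo-$z$-embedding hypothesis. For the rightmost map $H^1(W_F,\hat{G_z}) \rw \Psi^+(G_z)$, the argument is tautological: a class $[a]$ corresponds to the $\hat{G_z}$-conjugacy class of the $L$-homomorphism $\varphi_a\colon L_F \times \tx{SL}_2 \rw {^L G_z}$ defined by $(w,x) \mapsto a(w) \rtimes w$, trivial on $\tx{SL}_2$. Since the projection $L_F \times \tx{SL}_2 \rw W_F$ admits a section, two such $L$-homomorphisms are $\hat{G_z}$-conjugate precisely when the underlying cocycles $a,b$ are cohomologous on $W_F$.

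For the middle map $H^1(W_F,Z(\hat{G_z})) \rw H^1(W_F,\hat{G_z})$, I apply the long exact cohomology sequence attached to the central extension
\[ 1 \rw Z(\hat{G_z}) \rw \hat{G_z} \rw \hat G_\tx{ad} \rw 1, \]
valid because $G_z$ and $G$ share the same adjoint group, as $G_{z,\tx{der}} \subset G$. The kernel of the map in question equals the cokernel of $\hat{G_z}^{W_F} \rw \hat G_\tx{ad}^{W_F}$, so injectivity reduces to surjectivity of this restriction to $W_F$-fixed points. Given $\bar g \in \hat G_\tx{ad}^{W_F}$, I lift it through the isogeny $\hat G_\tx{sc} \rw \hat G_\tx{ad}$; the obstruction to choosing a $W_F$-equivariant lift lies in $H^1(W_F,Z(\hat G_\tx{sc}))$, and the task is to show its image in $H^1(W_F,Z(\hat{G_z}))$ vanishes, using the enlarged center afforded by the pseudo-$z$-embedding.

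For the leftmost map $H^1(W_F,\hat C) \rw H^1(W_F,Z(\hat{G_z}))$, I extract from \eqref{eq:dex} the central short exact sequence
\[ 1 \rw \hat C \rw Z(\hat{G_z}) \rw Z(\hat G) \rw 1; \]
here $\hat C$ lies inside $Z(\hat{G_z})$ because it is the kernel of the surjection $\hat{G_z} \rw \hat G$ and therefore central, with quotient $Z(\hat{G_z})/\hat C = Z(\hat G)$. The associated long exact sequence shows that the kernel of the first map equals the cokernel of $Z(\hat{G_z})^{W_F} \rw Z(\hat G)^{W_F}$. By Tate--Nakayama local duality for diagonalizable groups over the $p$-adic field $F$, this surjectivity on fixed points is Pontryagin dual to the injectivity of $H^1(F,Z(G)) \rw H^1(F,Z(G_z))$, which is exactly condition (3) of the pseudo-$z$-embedding definition.

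The main obstacle is the middle step: while the leftmost injectivity is a clean duality translation of the pseudo-$z$-embedding axiom, and the rightmost is formal, establishing the surjectivity $\hat{G_z}^{W_F} \thrw \hat G_\tx{ad}^{W_F}$ requires tracking how an obstruction originally in $H^1(W_F,Z(\hat G_\tx{sc}))$ behaves under pushforward to $H^1(W_F,Z(\hat{G_z}))$. The already-established leftmost injectivity, together with the connectedness of $\hat C$ and the structural presentation of $Z(\hat{G_z})$ as an extension of $Z(\hat G)$ by $\hat C$, provides the leverage needed to see that this obstruction vanishes.
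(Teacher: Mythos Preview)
Your treatment of the rightmost map is fine and matches the paper's: it is the inflation map along the surjection $L_F\times\tx{SL}_2 \rw W_F$. Your leftmost argument is close in spirit to the paper's, which instead uses the commuting square
\[
\xymatrix{
H^1(W_F,\hat C)\ar[r]\ar@{=}[d]&H^1(W_F,Z(\hat{G_z}))\ar@{^(->}[d]\\
C(F)^D\ar[r]&G_z(F)^D
}
\]
and the surjectivity of $G_z(F)\rw C(F)$ (Fact~\ref{fct:surj}); your route via the exact sequence $1\rw\hat C\rw Z(\hat G_z)\rw Z(\hat G)\rw 1$ and duality with the injectivity of $H^1(F,Z(G))\rw H^1(F,Z(G_z))$ is reasonable, though the precise form of Tate--Nakayama duality you invoke for the possibly disconnected $Z(G)$ should be stated and referenced carefully.

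The genuine gap is the middle map. You correctly reduce injectivity of $H^1(W_F,Z(\hat G_z))\rw H^1(W_F,\hat G_z)$ to surjectivity of $\hat G_z^{W_F}\rw \hat G_\tx{ad}^{W_F}$, and you correctly identify the obstruction to lifting a given $\bar g$ as a class in $H^1(W_F,Z(\hat G_\tx{sc}))$ pushed forward to $H^1(W_F,Z(\hat G_z))$. But you never show this pushforward vanishes. The final paragraph asserting that the leftmost injectivity and the connectedness of $\hat C$ ``provide the leverage needed'' is not an argument: injectivity of $H^1(W_F,\hat C)\rw H^1(W_F,Z(\hat G_z))$ says nothing about whether a particular class coming from $Z(\hat G_\tx{sc})$ dies in $H^1(W_F,Z(\hat G_z))$. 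In fact, the statement you are trying to prove here has nothing to do with the pseudo-$z$-embedding: it is a general theorem of Kottwitz \cite[1.6]{Kot84}, valid for \emph{any} connected reductive group over a local field, and its proof uses Kneser's vanishing theorem for $H^1$ of simply connected groups. The paper simply cites this result. You should either invoke it directly, or, if you wish to reprove it, supply the missing input (namely that $H^1(F,G_{z,\tx{sc}})=1$, which via Kottwitz's duality forces the relevant obstruction class to be trivial).
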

\begin{proof} We have
\[ \xymatrix{
H^1(W_F,\hat C)\ar[r]\ar@{=}[d]&H^1(W_F,Z(\hat{G_z}))\ar@{^(->}[d]\\
C(F)^D\ar[r]&G_z(F)^D
} \]
and the first of the three maps is injective because $G_z(F) \rw C(F)$ is surjective. The second map is injective due to \cite[1.6]{Kot84}. The third map is injective because it can be seen as the inflation map associated to the quotient map $L_F \times \tx{SL}_2 \rw W_F$.
\end{proof}

\begin{lem} \label{lem:simple} The action of $H^1(W_F,\hat C)$ on $\Psi^+(G_z)$ given by pointwise multiplication of cocycles is simple.
\end{lem}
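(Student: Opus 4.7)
The strategy is to restrict to $W_F\subset L_F\times\tx{SL}_2$ and then reduce to an inner-twisted version of Lemma~\ref{lem:zin_dex}.

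I first unpack the hypothesis that $[c]\in H^1(W_F,\hat C)$ stabilizes some $[\varphi]\in\Psi^+(G_z)$: viewing $\Psi^+(G_z)$ inside $H^1(L_F\times\tx{SL}_2,\hat G_z)$ and inflating $c$ trivially on the $\tx{SL}_2$ factor, the equality $[c]\cdot[\varphi]=[\varphi]$ yields some $g\in\hat G_z$ with $c(w)\varphi(y)=g\,\varphi(y)\,y(g^{-1})$ for every $y\in L_F\times\tx{SL}_2$, where $w$ denotes the image of $y$ in $W_F$. Restricting $y$ to $W_F$ and writing $\varphi_W:=\varphi|_{W_F}\in Z^1(W_F,\hat G_z)$, this becomes $[c\cdot\varphi_W]=[\varphi_W]$ in $H^1(W_F,\hat G_z)$. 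So it suffices to prove that this latter equality forces $[c]=0$ in $H^1(W_F,\hat C)$.

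I then apply the standard inner-twisting trick. Let $\hat G_z^{\varphi_W}$ denote $\hat G_z$ as an algebraic group but equipped with the new $W_F$-action $w\cdot_{\varphi_W}x:=\varphi_W(w)\,w(x)\,\varphi_W(w)^{-1}$. Because $\hat C$ is central in $\hat G_z$, its $W_F$-action is the same in both settings, so $c$ is simultaneously a $W_F$-cocycle into $\hat G_z$ and into $\hat G_z^{\varphi_W}$. The twisting bijection $H^1(W_F,\hat G_z^{\varphi_W})\xrightarrow{\sim}H^1(W_F,\hat G_z)$, $[\xi]\mapsto[\xi\cdot\varphi_W]$, sends the trivial class to $[\varphi_W]$ and identifies the inclusion-induced map $H^1(W_F,\hat C)\rw H^1(W_F,\hat G_z^{\varphi_W})$ with the map $[c]\mapsto[c\cdot\varphi_W]$ into $H^1(W_F,\hat G_z)$. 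Consequently, $[c\cdot\varphi_W]=[\varphi_W]$ is equivalent to the triviality of the image of $[c]$ in $H^1(W_F,\hat G_z^{\varphi_W})$.

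It remains to establish that $H^1(W_F,\hat C)\rw H^1(W_F,\hat G_z^{\varphi_W})$ has trivial kernel, and for this I would rerun the proof of Lemma~\ref{lem:zin_dex} with $\hat G_z$ replaced by its inner twist. Factoring the map through $H^1(W_F,Z(\hat G_z))$, the first piece depends only on the $W_F$-modules $\hat C$ and $Z(\hat G_z)$, neither of which is altered by an inner twist (the center is pointwise fixed by all inner automorphisms), so the argument in Lemma~\ref{lem:zin_dex} based on the surjectivity $G_z(F)\rw C(F)$ from Fact~\ref{fct:surj} applies unchanged. The second piece $H^1(W_F,Z(\hat G_z))\rw H^1(W_F,\hat G_z^{\varphi_W})$ is furnished by \cite[1.6]{Kot84}, whose hypotheses (a connected reductive complex group with continuous $W_F$-action containing a central subgroup) are satisfied by $\hat G_z^{\varphi_W}$. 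The composite is then injective, forcing $[c]=0$. The main obstacle is the bookkeeping around the twisting bijection, which is delicate but routine once one observes that inner twisting is invisible to cocycles valued in the central subgroup $\hat C$.
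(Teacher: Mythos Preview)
Your reduction via restriction to $W_F$ and the inner-twisting bijection is clean and correct up to the final step, but the invocation of \cite[1.6]{Kot84} for the twisted group $\hat G_z^{\varphi_W}$ is a genuine gap. That result is proved for the $L$-action of $W_F$ on $\hat G_z$ (factoring through $\Gamma$ by pinned automorphisms), and it does \emph{not} extend to arbitrary continuous actions. A concrete obstruction: take $\hat G=\tx{GL}_2$ with trivial $W_F$-action and twist by a homomorphism $\varphi_W:W_F\rw\tx{GL}_2$ whose image is $\{I,\tx{diag}(1,-1)\}$. Then for $g=\begin{pmatrix}0&1\\1&0\end{pmatrix}$ one computes $g^{-1}\,\tx{Ad}(\varphi_W(w))(g)=\pm 1$, so the nontrivial quadratic character of $W_F$ lies in the kernel of $H^1(W_F,Z(\tx{GL}_2))\rw H^1(W_F,\tx{GL}_2^{\varphi_W})$. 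The underlying reason is that $(\hat G_z^{\varphi_W})^{W_F}\rw(\hat G_{z,\tx{ad}}^{\varphi_W})^{W_F}$ need not be surjective once a pinning is no longer preserved. Although this particular $\hat G_z$ does not arise from a nontrivial pseudo-$z$-embedding, the example shows you cannot simply cite \cite[1.6]{Kot84} as stated; the hypotheses you list (``connected reductive complex group with continuous $W_F$-action'') are not what that lemma assumes.

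The paper circumvents this by a different mechanism: rather than comparing $H^1$ of $\hat C$ with $H^1$ of the (twisted) ambient group, it maps $\Psi^+(G_z)$ forward to the abelian group $H^1(W_F,\hat G_\tx{sc}\rw\hat G_z)\cong Z(G_z)(F)^D$ via the crossed-module cohomology of \cite[\S5.3]{KalEpi}, and observes that the $H^1(W_F,\hat C)$-action becomes the translation action of $C(F)^D$ on $Z(G_z)(F)^D$. Simplicity then follows immediately from the injection $C(F)^D\hookrightarrow Z(G_z)(F)^D$, which is dual to the surjection $Z(G_z)(F)\rw C(F)$ of Fact~\ref{fct:surj}. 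This argument never touches the twisted cohomology set $H^1(W_F,\hat G_z^{\varphi_W})$ and so avoids the issue entirely. If you wish to salvage your approach, you would need to show directly that the connecting map $S_{\varphi}\rw H^1(W_F,\hat C)$ coming from $1\rw\hat C\rw\hat G_z^{\varphi_W}\rw\hat G^{\varphi_W}\rw 1$ is trivial, which is essentially equivalent to the lemma and requires the pseudo-$z$-embedding hypothesis in a more substantive way than you have used it.
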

\begin{proof}
Consider the braided crossed module $\hat G_\tx{sc} \rw \hat{G_z}$. According to \cite[\S5.3]{KalEpi}, there is a canonical isomorphism
\[ H^1(W_F,\hat G_\tx{sc} \rw \hat{G_z}) \rw Z(G_z)(F)^D. \]
Composing the restriction map $\Psi^+(G_z) \rw H^1(W_F,\hat G_z)$ with the natural map $H^1(W_F,\hat G_z) \rw H^1(W_F,\hat G_\tx{sc} \rw \hat G_z)$ we obtain the top left horizontal map in the diagram
\[ \xymatrix{
\Psi^+(G_z)\ar[r]&H^1(W_F,\hat G_\tx{sc} \rw \hat{G_z})\ar[r]^-{\cong}&Z(G_z)(F)^D\\
H^1(W_F,\hat C)\ar[r]^-{\cong}\ar[u]&H^1(W_F,1 \rw \hat C)\ar[r]^-\cong\ar[u]&C(F)^D\ar@{^(->}[u]
} \]
The simplicity of the action of $C(F)^D$ on $Z(G_z)(F)^D$ by multiplication of characters now implies the simplicity of the action of $H^1(W_F,\hat C)$ on $\Psi^+(\hat{G_z})$.
\end{proof}

\begin{lem} \label{lem:sseq} Let $\varphi_z \in \Psi^+(G_z)$ and let $\varphi \in \Psi^+(G)$ be its image. Then we have the exact sequence
\[ 1 \rw \hat C^\Gamma \rw S_{\varphi_z} \rw S_\varphi \rw 1. \]
\end{lem}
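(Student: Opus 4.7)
The plan is to verify exactness at each of the three positions; the left end and the middle position are formal, and essentially all the content is in surjectivity on the right.

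Because $\hat C$ is central in $\hat{G_z}$, conjugation by $\varphi_z(x)$ on $\hat C$ reduces to the Galois action via $L_F \rw W_F \rw \Gamma$. Writing this action as ${}^x(-)$, we have $\varphi_z(x) c \varphi_z(x)^{-1} = {}^x c$ for every $c \in \hat C$, so a central element of $\hat{G_z}$ lies in $S_{\varphi_z}$ precisely when it is $\Gamma$-fixed. This simultaneously gives the injection $\hat C^\Gamma \hookrightarrow S_{\varphi_z}$ and identifies $\hat C^\Gamma$ with the kernel of $S_{\varphi_z} \rw S_\varphi$, since any element of this kernel lies in $\ker(\hat{G_z} \rw \hat G) = \hat C$.

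For surjectivity, let $s \in S_\varphi$ and pick an arbitrary lift $\tilde s \in \hat{G_z}$. Define
\[ c(x) := \tilde s\, \varphi_z(x)\, \tilde s^{-1} \varphi_z(x)^{-1}. \]
The image of $c(x)$ in $\hat G$ equals $s\, \varphi(x)\, s^{-1} \varphi(x)^{-1} = 1$, so $c$ takes values in $\hat C$; a short manipulation using the centrality of $\hat C$ gives the cocycle identity $c(xy) = c(x) \cdot {}^x c(y)$. Restricted to $\tx{SL}_2(\C)$, which is perfect and acts trivially on $\hat C$, the cocycle $c$ is a continuous homomorphism into an abelian group and hence trivial, so $c$ is inflated from a class $[c] \in H^1(W_F, \hat C)$.

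The decisive observation is that $\tx{Ad}(\tilde s)(\varphi_z(x)) = c(x) \varphi_z(x)$, so the parameter $\tx{Ad}(\tilde s) \circ \varphi_z$ is precisely the translate of $\varphi_z$ by $c$ under the $H^1(W_F,\hat C)$-action of Lemma \ref{lem:simple}. Since it is manifestly $\hat{G_z}$-conjugate to $\varphi_z$, the two represent the same class in $\Psi^+(G_z)$, and the simplicity statement of Lemma \ref{lem:simple} then forces $[c] = 1$. Writing $c(x) = {}^x t \cdot t^{-1}$ for some $t \in \hat C$, a direct check shows that $s_z := t\, \tilde s$ centralizes $\varphi_z$ and still lifts $s$. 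The only nonformal input in the entire argument is this appeal to Lemma \ref{lem:simple} to kill the obstruction class; everything else is bookkeeping around the exact sequence $1 \rw \hat C \rw \hat{G_z} \rw \hat G \rw 1$ and the centrality of $\hat C$.
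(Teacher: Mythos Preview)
Your proof is correct and is essentially the paper's argument unwound by hand: the cocycle $c$ you write down is exactly the image of $s$ under the connecting map in the long exact sequence for the $\varphi_z$-twisted action on $1 \rw \hat C \rw \hat{G_z} \rw \hat G \rw 1$, and your observation that $c\cdot\varphi_z = \tx{Ad}(\tilde s)\circ\varphi_z$ is the concrete content of Serre's twisting bijection identifying the next map with the orbit map through $\varphi_z$; both proofs then finish by invoking Lemma~\ref{lem:simple}. One minor remark: since $\varphi_z$ is defined on $L_F \times \tx{SL}_2$, the kernel of the projection to $W_F$ contains two $\tx{SL}_2$-factors, but your perfectness argument kills $c$ on both simultaneously, so the reduction to $H^1(W_F,\hat C)$ goes through as stated.
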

Recall that $S_\varphi = \tx{Cent}(\varphi,\hat G)$ and $S_{\varphi_z}=\tx{Cent}(\varphi_z,\hat{G_z})$.
\begin{proof}
The exact sequence \eqref{eq:dex} has an action of $W_F$, and hence also of $L_F$. We twist it by $\varphi_z$ and obtain the long exact cohomology sequence
\begin{equation} 1 \rw \hat C^\Gamma \rw S_{\varphi_z} \rw S_\varphi \rw H^1(L_F,\hat C) \rw H^1(L_F,\varphi_z,\hat{G_z}). \label{eq:pizza} \end{equation}
Here, $H^1(L_F,\varphi_z,\hat{G_z})$ is the continuous cohomology group of $\hat{G_z}$ for the action of $W_F$ given by $\varphi_z$. To prove the lemma, we must show that the map $H^1(L_F,\hat C) \rw H^1(W_F,\varphi_z,\hat{G_z})$ is injective.

By \cite[\S5.3, Prop. 35]{SerGC}, we have a bijection
\[ H^1(L_F,\varphi_z,\hat{G_z}) \rw H^1(L_F,\hat{G_z}). \]
Composing this bijection with the last arrow in the long exact cohomology sequence \eqref{eq:pizza}, we obtain a map
\[ H^1(L_F,\hat C) \rw H^1(L_F,\hat{G_z}). \]
This map is in fact the orbit map through $\varphi_z$ for the action of $H^1(L_F,\hat C)$ on $H^1(L_F,\hat{G_z})$ by multiplication of 1-cocycles. Since $\hat C$ is abelian, $H^1(L_F,\hat C)=H^1(W_F,\hat C)$. Moreover, since $\varphi_z$ belongs to the subset $\Psi^+(G_z) \subset H^1(L_F,\hat G_z)$, so does also its orbit under $H^1(W_F,\hat C)$. According to Lemma \ref{lem:simple} the orbit map is injective and the proof is complete.

\end{proof}

\begin{cor}  \label{cor:zin_liftpar} The map $\Psi^+(G_z) \rw \Psi^+(G)$ is surjective and its fibers are torsors for the action of $H^1(W_F,\hat C)$ on $\Psi^+(G_z)$
by multiplication of cocycles. This map also induces a surjection between the sets of Langlands parameters and between the sets of Arthur parameters.
\end{cor}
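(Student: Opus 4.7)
The corollary splits into three claims: surjectivity of $\Psi^+(G_z)\rw\Psi^+(G)$, that its fibers are torsors under $H^1(W_F,\hat C)$, and compatibility with the sub-classes of Langlands and Arthur parameters. The torsor and sub-class claims follow fairly mechanically once surjectivity is in hand, so the substantive step is surjectivity.

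\textbf{Surjectivity.} For a given $\varphi\in\Psi^+(G)$, lifting it to $\Psi^+(G_z)$ amounts to lifting the continuous $1$-cocycle $a:W_F\rw\hat G$ defined by $\varphi|_{W_F}$ through the central extension $1\rw\hat C\rw\hat{G_z}\rw\hat G\rw 1$. The restriction $\varphi|_{\tx{SL}_2}$ is algebraic, and since any algebraic homomorphism from the semisimple group $\tx{SL}_2$ to the abelian complex group $\hat C$ is trivial, the $\tx{SL}_2$-part of $\varphi$ lifts uniquely to $\hat{G_z}$. Choosing a continuous set-theoretic section $s:\hat G\rw\hat{G_z}$, the failure of $s\circ a$ to be a cocycle is a continuous $2$-cocycle in $\hat C$, whose class in $H^2_{\tx{cts}}(W_F,\hat C)$ is the sole obstruction to the existence of a genuine lift. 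The heart of the argument is then the vanishing of this obstruction group, which I would deduce from the defining property $H^1(F,C)=1$ of a pseudo-$z$-embedding via local Tate duality between the Weil-group cohomology of the complex torus $\hat C$ and the Galois cohomology of the $F$-torus $C$. Alternatively, in the explicit construction of Corollary \ref{cor:zembex}, one observes that extending a continuous character from the closed subgroup $Z(G)(F)\subset Z(G_z)(F)$ of locally compact abelian groups is always possible by Pontryagin duality, and then applies local Langlands for tori to translate this back to cohomology. This vanishing step is the main obstacle.

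\textbf{Fibers and sub-classes.} Given two lifts $\varphi_{z,1},\varphi_{z,2}$ of a common $\varphi$, $\hat{G_z}$-conjugating $\varphi_{z,2}$ by a lift to $\hat{G_z}$ of any $\hat G$-element realizing the conjugacy of the images in $\Psi^+(G)$ reduces to the case where the two lifts agree modulo $\hat C$ pointwise; their ratio is then a continuous $1$-cocycle in $\hat C$, showing that $H^1(W_F,\hat C)$ acts transitively on the fiber, and freeness is exactly Lemma \ref{lem:simple}. The Langlands sub-class claim is automatic by the $\tx{SL}_2$-argument already used in surjectivity. For Arthur parameters, any unbounded $\hat C$-component in the $W_F$-restriction of a given lift can be absorbed by shifting by a suitable class in $H^1(W_F,\hat C)$, producing a representative whose image in $\hat{G_z}$ is bounded; this uses the torsor structure and the decomposition of $H^1(W_F,\hat C)$ into bounded and unbounded parts arising from the polar structure of the complex torus $\hat C$.
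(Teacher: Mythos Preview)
Your plan is workable but the justification of its central step is wrong, and the route differs substantially from the paper's. You correctly identify that lifting $\varphi|_{W_F}$ is obstructed by a class in $H^2_{\tx{cts}}(W_F,\hat C)$, but this group does \emph{not} vanish ``via local Tate duality'' with $H^1(F,C)=1$: there is no such duality pairing these two groups. What is true is that $H^2_{\tx{cts}}(W_F,\hat T)=0$ for \emph{every} complex torus $\hat T$ with $W_F$-action through a finite quotient, independent of any hypothesis on $C$; this is a nontrivial fact (ultimately reducing to $H^2(W_F,\C^\times)=0$) that needs its own argument or citation. Your alternative via character extension is closer to the mark, but you tie it to the explicit construction of Corollary~\ref{cor:zembex} while the statement concerns arbitrary pseudo-$z$-embeddings, and you invoke local Langlands for tori when $Z(G)$ need not be connected. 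You also silently assume that the unique $\tx{SL}_2$-lift commutes with whatever $W_F$-lift you produced; this does hold (the resulting commutator map $\tx{SL}_2\rw\hat C$ is a homomorphism to an abelian group, hence trivial), but it deserves a sentence.

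The paper avoids $H^2$-vanishing entirely. It passes to the crossed-module cohomology $H^1(W_F,\hat G_\tx{sc}\rw\hat G)\cong\tx{Hom}_\tx{cts}(Z(G)(F),\C^\times)$ (from \cite{KalEpi}), so that lifting $\varphi|_{W_F}$ becomes the concrete problem of extending a character of $Z(G)(F)$ to $Z(G_z)(F)$, handled by separating unitary and $\R_{>0}$-valued parts. This buys the Arthur-parameter claim for free: choosing a unitary extension when the original character is unitary, a finite-kernel diagonal-map argument shows the resulting lift is bounded, so no after-the-fact correction is needed. For the $\tx{SL}_2$-factors the paper uses Lemma~\ref{lem:sseq} to get surjectivity of centralizers $S_{\varphi_{0,z}}\rw S_{\varphi_0}$ and lifts through the resulting isogeny of derived subgroups, which automatically builds in the required commutation. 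Your $H^2$-route is shorter once the vanishing is granted, but it outsources the hard step and requires separate patching for boundedness and compatibility; the paper's route is more self-contained and handles all three claims in one construction.
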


\begin{proof}
Let $\varphi \in \Psi^+(G)$ and let $\varphi_0 \in H^1(W_F,\hat G)$ be its restriction to $W_F$. We will argue that there exists a lift $\varphi_{0,z} \in H^1(W_F,\hat G_z)$ of $\varphi_0$. Moreover, we will show that if $\varphi_0(W_F) \subset \hat G$ is bounded, then $\varphi_{0,z}$ can be chosen in such a way that $\varphi_{0,z}(W_F) \subset \hat G_z$ is bounded.

From the exact sequence \eqref{eq:dex} of $W_F$-modules we obtain the following diagram with exact rows
\[ \xymatrix@C=1pc{
H^1(W_F,\hat C)\ar[r]\ar@{=}[d]&H^1(W_F,\hat G_z)\ar[r]\ar[d]&H^1(W_F,\hat G)\ar[r]\ar[d]&H^2(W_F,\hat C)\ar@{=}[d]\\
H^1(W_F,\hat C)\ar[r]&H^1(W_F,\hat G_\tx{sc} \rw \hat G_z)\ar[r]&H^1(W_F,\hat G_\tx{sc} \rw \hat G)\ar[r]&H^2(W_F,\hat C)\\
}\]
It implies that if $\varphi' \in H^1(W_F,\hat G_\tx{sc} \rw \hat G_z)$ is an element whose image in the group $H^1(W_F,\hat G_\tx{sc} \rw \hat G)$ is the same as the image of $\varphi_0$ there, then there exists $\varphi_{0,z} \in H^1(W_F,\hat G_z)$ mapping simultaneously to $\varphi_0$ and to $\varphi'$.

According to \cite[Proposition 4.5.1]{KalEpi} we have the functorial isomorphism $H^1(W_F,\hat G_\tx{sc} \rw \hat G) = \tx{Hom}_\tx{cts}(Z(G)(F),\C^\times)$. Note that under this isomorphism, the unitary characters of $Z(G)(F)$ correspond to precisely those elements of $H^1(W_F,\hat G_\tx{sc} \rw \hat G)$ whose image in $H^1(W_F,\tx{cok}(\hat G_\tx{sc} \rw \hat G))$ is bounded. Indeed, this image corresponds by the usual Langlands duality for tori to a character of $Z(G)^\circ(F)$ and it is bounded if and only if the character is unitary, but a character of $Z(G)(F)$ is unitary if and only if its restriction to $Z(G)^\circ(F)$ is unitary.

The image of $\varphi_0$ in $H^1(W_F,\hat G_\tx{sc} \rw \hat G)$ thus corresponds to a character $\chi_0$ of $Z(G)(F)$. Write this character as a product $\chi_u \cdot \chi_s$ where $\chi_u : Z(G)(F) \rw \C^\times$ is unitary and $\chi_s : Z(G)(F) \rw \R_{>0}$. By Pontryagin theory $\chi_u$ extends to a unitary character $\chi_{u,z} : Z(G_z)(F) \rw \C^\times$. On the other hand, $\chi_s$ kills the maximal compact subgroup $K \subset Z(G)(F)$. If $K_z \subset Z(G_z)(F)$ is the maximal compact subgroup, then $Z(G)/K \subset Z(G_z)/K_z$ is an inclusion of finite-rank free $\Z$-modules. Since $\R_{>0}$ is injective, the homomorphism $\chi_s$ extends to a homomorphism $\chi_{s,z} : Z(G_z)(F)/K_z \rw \R_{>0}$. We set $\chi_{0,z} = \chi_{u,z} \cdot \chi_{s,z}$. If $\chi_0$ is unitary, so that $\chi_s=1$, we choose $\chi_{s,z}=1$ and $\chi_{0,z}$ is unitary. Let $\varphi' \in H^1(W_F,\hat G_\tx{sc} \rw \hat G_z)$ correspond to $\chi_{0,z}$. Choose $\varphi_{0,z}$ to map to the pair $(\varphi_0,\varphi')$. Thus $\varphi_{0,z}$ lifts $\varphi_0$. Moreover, if $\varphi_0$ is bounded, then so is its image in $H^1(W_F,\tx{cok}(\hat G_\tx{sc} \rw \hat G))$, and hence the character $\chi_0$ is unitary. Then $\chi_{0,z}$ is also unitary and thus the image of $\varphi'$ in $Z^1(W_F,\tx{cok}(\hat G_\tx{sc} \rw \hat G_z))$ is bounded. To show that $\varphi_{0,z}$ is bounded consider the diagonal map
\[ \hat G_z \rw \hat G \times \tx{cok}(\hat G_\tx{sc} \rw \hat G_z).\]
The composition of $\varphi_{0,z}$ with this map is bounded, but the kernel of this map is the finite central subgroup $\hat C \cap \hat G_{z,\tx{der}}$, and thus $\varphi_{0,z}$ is itself bounded.

Our next step is to extend $\varphi_{0,z}$ to an admissible 1-cocycle $L_F \rw \hat G_z$ that lifts $\varphi|_{L_F}$. We apply Lemma \ref{lem:sseq} to $\varphi_{0,z}$ to obtain a surjective homomorphism
\[ \tx{Cent}(\varphi_{0,z}(W_F),\hat G_z) \rw \tx{Cent}(\varphi_0(W_F),\hat G) \]
of complex algebraic groups with reductive connected components. The restriction of this map to the neutral connected components remains surjective and its kernel is still central. Thus this map restricts further to an isogeny on the level of derived subgroups. The restriction of $\varphi$ to $\tx{SL}_2 \subset L_F$ is a homomorphism of algebraic groups $\tx{SL}_2 \rw \tx{Cent}(\varphi_0(W_F),\hat G)^\circ_\tx{der}$ and lifts uniquely along this isogeny to a homomorphism $\tx{SL}_2 \rw \tx{Cent}(\varphi_{0,z}(W_F),\hat G_z)^\circ_\tx{der}$. We thus obtain a lift $\varphi_{1,z} : L_F \rw \hat G_z$ of $\varphi|_{L_F} \rw \hat G$. This completes the proof for the case of Langlands parameters.

To handle general elements of $\Psi^+(G)$, we repeat this argument again to accommodate the second copy of $\tx{SL}_2$. That is, we apply Lemma \ref{lem:sseq} to $\varphi_{1,z}$ and obtain an isogeny $\tx{Cent}(\varphi_{1,z}(L_F),\hat G_z)^\circ_\tx{der} \rw \tx{Cent}(\varphi(L_F),\hat G)^\circ_\tx{der}$ and obtain a lift of $\varphi|_{\tx{SL}_2} : \tx{SL}_2 \rw \tx{Cent}(\phi(L_F),\hat G)^\circ_\tx{der}$ to a homomorphism $\tx{SL}_2 \rw \tx{Cent}(\varphi_{1,z}(L_F),\hat G_z)^\circ_\tx{der}$, which together with $\varphi_{1,z}$ provides a lift $\varphi_z \in \Psi^+(G_z)$ of $\varphi$.

We have thus proved that $\Psi^+(G_z) \rw \Psi^+(G)$ is surjective and induces a surjection between the sets of Langlands parameters and between the sets of Arthur parameters. To show that the group $H^1(W_F,\hat C)$ acts transitively on the fibers of this map, consider the exact sequence \eqref{eq:dex} with action of $L_F \times \tx{SL}_2$ given by the quotient map of this group to $W_F$ and the corresponding exact sequence of pointed sets
\[ H^1(L_F \times \tx{SL}_2,\hat C) \rw \Psi^+(G_z) \rw \Psi^+(G) \]
According to \cite[\S5.7, Prop. 42]{SerGC} the group $H^1(L_F \times \tx{SL}_2,\hat C)$ acts transitively on the fibers. But since $\hat C$ is a torus this group is equal to $H^1(W_F,\hat C)$. Finally, the simplicity of this action comes from Lemma \ref{lem:simple}.
\end{proof}

\subsubsection{Endoscopy}

We continue with a pseudo-$z$-embedding $1 \rw G \rw G_z \rw C \rw 1$, but assume now that $G$, and hence also $G_z$, is quasi-split. As before we set $Z_n \subset Z(G)$ to be the preimage of $(Z(G)/Z(G_\tx{der}))[n]$ and form $G_n=G/Z_n$, $\bar G=\varinjlim G_n$, and $\hat{\bar G}=\varprojlim \hat G_n$. We also form $G_{z,n}=G_z/Z_n$, $\bar G_z = \varinjlim G_{z,n}$ and $\hat{\bar G_z}=\varprojlim \hat G_{z,n}$.

For every $n$ we have the exact sequence
\[ 1 \rw G_n \rw G_{z,n} \rw C \rw 1. \]
Since the composition $Z(G_z)(F) \rw Z(G_{z,n})(F) \rw C(F)$ is surjective, so must be the map $Z(G_{z,n})(F) \rw C(F)$. Thus the above exact sequence is a pseudo-$z$-embedding. Dually we obtain the exact sequence
\[ 1 \rw \hat C \rw \hat G_{z,n} \rw \hat G_n \rw 1 \]
and taking the limit over $n$ we arrive at the exact sequence
\[ 1 \rw \hat C \rw \hat{\bar G_z} \rw \hat{\bar G} \rw 1. \]

Let $\mf{\dot e}=(H,\mc{H},\dot s,\xi)$ be a refined endoscopic datum for $G$. We are going to construct a refined endoscopic datum $\mf{\dot e}_z = (H_z,\dot s_z,\mc{H}_z,\xi_z)$ for $G_z$. Let $\dot s_z \in \hat{\bar G_z}$ be any preimage of $\dot s$. We form $\mc{H}_z$ and $\xi_z$ using the following pull-back diagram:
\[ \xymatrix{
1\ar[r]&\hat C\ar[r]&{^LG_z}\ar[r]&^LG\ar[r]&1\\
1\ar[r]&\hat C\ar[r]\ar@{=}[u]&\mc{H}_z\ar[r]\ar[u]^{\xi_z}&\mc{H}\ar[r]\ar[u]^\xi&1\\
} \]
Dually, we construct $H_z$ as the push-out:
\[ \xymatrix{
Z(G)\ar[r]\ar[d]&H\ar[d]\\
Z(G_z)\ar[r]&H_z
} \]

\begin{lem} The quadruple $\mf{\dot e}_z$ is a refined endoscopic datum for $G_z$. Furthermore, the natural embedding $H \rw H_z$ is a pseudo-$z$-embedding with cokernel $C$.\end{lem}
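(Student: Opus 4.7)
The lemma has two claims, which I would address in turn. Unpacking the constructions first: the pullback defining $\mc H_z$ yields a short exact sequence $1 \rw \hat C \rw \mc H_z \rw \mc H \rw 1$, and restricting to the kernel of the projection to $W_F$ produces an extension $1 \rw \hat C \rw \hat H_z \rw \hat H \rw 1$, where I write $\hat H_z := \ker(\mc H_z \rw W_F)$. Dually, the pushout defining $H_z$ produces $1 \rw H \rw H_z \rw C \rw 1$, with $Z(H_z)=Z(H)\cdot Z(G_z)$ and $Z(H) \cap Z(G_z) = Z(G)$ inside $H_z$.

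For the refined endoscopic datum axioms, $H_z$ is connected reductive (as a pushout of a reductive group by a torus along a central subgroup) and quasi-split since $H$ is. To match $\hat H_z$ as constructed with the Langlands dual of $H_z$, I would argue that the derived group of $H_z$ coincides with that of $H$, while the maximal tori extend by $C$; dually the root systems of $\hat H_z$ and $\hat H$ coincide and the tori extend by $\hat C$, matching the extension produced above. The $W_F$-action on $\hat H_z$ induced by $\mc H_z$ agrees with the one coming from the rational structure of $H_z$ because the pullback is formed in the category of $L$-groups. The identity $\xi_z(\hat H_z) = \tx{Cent}(\xi_z(\dot s_z),\hat G_z)^\circ$ I would verify root-theoretically: both sides are connected reductive subgroups of $\hat G_z$ generated by a common maximal torus $\hat T_z \ni \dot s_z$ and the root subgroups $U_\alpha$ with $\alpha(\dot s_z)=1$. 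Indeed, roots of $\hat G_z$ factor through the isogeny $\hat G_z \rw \hat G$ with central kernel $\hat C$, so $\alpha(\dot s_z)=\alpha(\dot s)$, and the latter equals $1$ precisely when $\alpha$ is a root of $\hat H$. A lift $\dot s_z$ inside $\hat{\bar{H_z}}$ of $\dot s \in Z(\hat{\bar H})^+$ exists by the surjectivity of $\hat{\bar{H_z}} \rw \hat{\bar H}$, and its image in $\hat H_z$ automatically lies in $Z(\hat H_z)^\Gamma$ by the same centrality argument combined with the $\Gamma$-invariance of $\dot s$.

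For the pseudo-$z$-embedding conditions on $H \rw H_z$, the cokernel is $C$, a torus with $H^1(F,C)=1$ by hypothesis. For the bijectivity of $H^1(F,Z(H)) \rw H^1(F,Z(H_z))$, I would apply the long exact sequence attached to the pushout sequence $1 \rw Z(G) \rw Z(H) \times Z(G_z) \rw Z(H_z) \rw 1$. Injectivity is immediate from the injectivity of $H^1(F,Z(G)) \rw H^1(F,Z(G_z))$. Surjectivity reduces to the vanishing of the connecting map $H^1(F,Z(H_z)) \rw H^2(F,Z(G))$, which follows from the injectivity of $H^2(F,Z(G)) \rw H^2(F,Z(G_z))$, itself a consequence of $H^1(F,C)=0$ applied to the long exact sequence for $1 \rw Z(G) \rw Z(G_z) \rw C \rw 1$. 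The main obstacle is the careful identification of the Langlands dual of $H_z$ with the constructed $\hat H_z$, a matching of root data and Galois actions which underlies all subsequent verifications; once in place, the remaining claims follow by routine diagram chasing.
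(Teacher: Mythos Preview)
Your outline captures the right shape of the argument, and your treatment of the root-theoretic identification $\xi_z(\hat H_z)=\tx{Cent}(s_z,\hat G_z)^\circ$ is essentially the paper's point (2). But there are two genuine gaps.

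\textbf{Splitting of $\mc H_z$.} You never verify that the extension $1 \rw \hat H_z \rw \mc H_z \rw W_F \rw 1$ is split, which is part of the definition of an endoscopic datum. This is not automatic from the pullback construction: a splitting $W_F \rw \mc H$ composed with $\xi$ gives an element of $Z^1(W_F,\hat G)$, and one must \emph{lift} it to $Z^1(W_F,\hat G_z)$ in order to produce a splitting of $\mc H_z$. The paper invokes Corollary~\ref{cor:zin_liftpar} (surjectivity of $\Psi^+(G_z)\rw\Psi^+(G)$) for exactly this purpose; that corollary in turn rests on the cohomological properties of the pseudo-$z$-embedding. You also omit the verification that $\mc H_z \rw W_F$ is a topological extension, which the paper handles via local/$\sigma$-compactness and the open mapping theorem.

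\textbf{$\Gamma$-invariance of $s_z$.} Your claim that the image of an arbitrary lift $\dot s_z$ ``automatically lies in $Z(\hat H_z)^\Gamma$ by the same centrality argument combined with the $\Gamma$-invariance of $\dot s$'' does not hold: lifts of $s \in Z(\hat H)^\Gamma$ to $Z(\hat H_z)$ differ by elements of $\hat C$, which need not be $\Gamma$-fixed. The paper's argument is genuinely cohomological: from $1\rw\hat C\rw Z(\hat H_z)\rw Z(\hat H)\rw 1$ the obstruction to a $\Gamma$-fixed lift is the image of $s$ in $H^1(W_F,\hat C)$, and one shows this vanishes using the injectivity of $H^1(W_F,\hat C)\rw H^1(W_F,Z(\hat H_z))$ (Lemma~\ref{lem:zin_dex}), which in turn depends on the pseudo-$z$-embedding property of $H\rw H_z$ just established. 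This is why the paper proves the second assertion of the lemma \emph{first}.

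A smaller point: in your surjectivity argument for $H^1(F,Z(H))\rw H^1(F,Z(H_z))$ via the pushout sequence, vanishing of the connecting map only gives surjectivity of $H^1(Z(H))\times H^1(Z(G_z))\rw H^1(Z(H_z))$; you still need the bijectivity $H^1(Z(G))\cong H^1(Z(G_z))$ to absorb the $H^1(Z(G_z))$-factor. The paper avoids this by using the sequence $1\rw Z(H)\rw Z(H_z)\rw C\rw 1$ directly, where surjectivity is immediate from $H^1(F,C)=0$ and injectivity from the surjectivity of $Z(G_z)(F)\rw C(F)$ (Fact~\ref{fct:surj}) composed with $Z(G_z)\hookrightarrow Z(H_z)$.
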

\begin{proof}
Let us consider the second statement first. By construction we have the exact sequence
\[ 1 \rw H \rw H_z \rw C \rw 1. \]
It provides the exact sequence
\[ 1 \rw Z(H) \rw Z(H_z) \rw C \rw 1. \]
The surjectivity of $H^1(F,Z(H)) \rw H^1(F,Z(H_z))$ follows from the vanishing of $H^1(F,C)$, and the injectivity of the same map is equivalent to the surjectivity of $Z(H_z)(F) \rw C(F)$, which in turns follows from the fact that the composition of this map with the natural inclusion $Z(G_z)(F) \rw Z(H_z)(F)$ is equal to the surjective map $Z(G_z)(F) \rw C(F)$ of Fact \ref{fct:surj}.

To ease notation we now assume $\mc{H} \subset {^LG}$ and $\mc{H}_z \subset {^LG_z}$, so that $\xi$ and $\xi_z$ are the natural inclusions. Let $s_z \in \hat{G_z}$ be the image of $\dot s_z$. We will argue that $\mf{e}_z=(H_z,\mc{H}_z,s_z,\xi_z)$ is an endoscopic datum for $G_z$. Let $s \in \hat G$ be the image of $\dot s$.
\begin{enumerate}
\item $H_z$ is quasi-split: This holds because $H$ is quasi-split and contains $H_{z,\tx{der}}$, since the quotient $H_z/H$ is a torus.
\item $[\hat{G_z}]_{s_z}^\circ$ is a dual group for $H_z$: Let $T^H \rw T^G$ be an admissible isomorphism from a maximal torus of $H$ to a maximal
torus of $G$. We form the push-outs
\[ \xymatrix{
Z(G)\ar[r]\ar[d]&T^H\ar[d]&&Z(G)\ar[r]\ar[d]&T^G\ar[d]\\
Z(G_z)\ar[r]&T^{H_z}&&Z(G_z)\ar[r]&T^{G_z}
} \]
and obtain an isomorphism $T^{H_z} \rightarrow T^{G_z}$ from a maximal torus in $H_z$ to a maximal torus in $G_z$. Let $T^{\hat{G_z}} \subset \hat{G_z}$ be a
maximal torus containing $s_z$. Its image $T^{\hat G}$ in $\hat G$ contains $s$. There exists an
admissible isomorphism $\hat{T^{G_z}} \rw T^{\hat{G_z}}$ with the following property: The induced isomorphism $\hat{T^G} \rightarrow T^{\hat G}$ when composed
with $\hat{T^H} \rightarrow \hat{T^G}$ identifies the coroot system $R^\vee(T^H,H)$ with the root system $R(T^{\hat G},\hat H)$. But in the diagram
\[ \xymatrix{
\hat{T^{H_z}}\ar[r]\ar[d]&T^{\hat{G_z}}\ar[d]\\
\hat{T^H}\ar[r]&T^{\hat G}\\
} \]
the left arrow induces a bijection $R^\vee(T^{H_z},H_z) \rw R^\vee(T^H,H)$ while the right arrow induces a bijection $R(T^{\hat{G_z}},[\hat{G_z}]_{s_z}^\circ) \rw R(T^{\hat G},\hat G_s)$. This implies that the top horizontal arrow induces a bijection
\[R^\vee(T^{H_z},H_z) \rw R(T^{\hat{G_z}},[\hat{G_z}]_{s_z}^\circ).\]
This shows that $[\hat{G_z}]_{s_z}^\circ$ is a dual group of $H_z$.

\item $\mc{H}_z$ is an extension of $W_F$ by $\hat{H_z}$: Since ${^LG_z} \rw {^LG}$ is surjective, so is also $\mc{H}_z \rw \mc{H}$. Composing the latter map  with $\mc{H} \rw W_F$ we obtain a continuous surjective map $\mc{H}_z \rw W_F$. Its kernel is the preimage of $\hat H$ in $\mc{H}_z$. Let's call this kernel $K$ for a moment. Then we have the exact sequence of topological groups and continuous homomorphisms
\[ 1 \rw K \rw \mc{H}_z \rw W_F \rw 1. \]
We claim that this is an extension, i.e. that the induced map $\mc{H}_z/K \rw W_F$ is an isomorphism of topological groups. To prove this, we will use that $\mc{H}_z$ is locally-compact and $\sigma$-compact. Indeed, since $\mc{H}$ is a split extension of $W_F$ by $\hat H$, and both $W_F$ and $\hat H$ are locally-compact and $\sigma$-compact, so is $\mc{H}$, and so is the product ${^LG_z} \times \mc{H}$, of which $\mc{H}_z$ is a closed subgroup. It follows that $\mc{H}_z$ is locally-compact and $\sigma$-compact, and the open mapping theorem implies that the surjection $\mc{H}_z \rw W_F$ is open. This proves the claim that the natural continuous bijective homomorphism $\mc{H}_z/K \rw W_F$ is an isomorphism of topological groups.

Next, one checks that $\xi_z$ restricts to a continuous bijective homomorphism from $K$ to the preimage of $\xi(\hat H)$ in ${^LG_z}$, the latter being precisely $[\hat{G_z}]_{s_z}^\circ$. $K$ being a closed subgroup of $\mc{H}_z$ is also locally-compact and $\sigma$-compact, so $\xi_z : K \rw [\hat{G_z}]_{s_z}^\circ$ is an isomorphism of topological groups. Having already shown in the previous point that $[\hat{G_z}]_{s_z}^\circ$ is a dual group for $\hat{H_z}$, we conclude that indeed $\mc{H}_z$ is an extension of $W_F$ by $\hat{H_z}$.

\item The extension $1 \rw \hat{H_z} \rw \mc{H}_z \rw W_F \rw 1$ is split: We know that the extension $1 \rw \hat H \rw \mc{H} \rw W_F \rw 1$ is split, so we may choose a splitting $W_F \rw \mc{H}$, which we then compose with $\xi : \mc{H} \rw {^LG}$ and obtain an element $a \in Z^1(W_F,\hat G)$. According to Corollary \ref{cor:zin_liftpar}, $a$ lifts to an element $a_z \in Z^1(W_F,\hat{G_z})$, which we interpret as an $L$-homomorphism $W_F \rw {^LG_z}$. Its image is contained in the image of $\xi_z$. But $\xi_z$ is an isomorphism of topological groups onto its image, due to the local- and $\sigma$-compactness of $\mc{H}_z$, so in the end we obtain a continuous $L$-homomorphism $W_F \rw \mc{H}_z$, which is just the splitting we were looking for.

\item $\dot s_z \in Z(\hat{\bar H_z})^+$: It is enough to show $s_z \in Z(\hat H_z)^\Gamma$. Consider the exact sequence of $W_F$-modules
\[ 1 \rw \hat C \rw Z(\hat H_z) \rw Z(\hat H) \rw 1. \]
Then $s \in Z(\hat H)^\Gamma$ maps to an element of $H^1(W_F,\hat C)$ whose image in $H^1(W_F,Z(\hat H_z))$ is trivial. By Lemma \ref{lem:zin_dex} and the already proved fact that $H \rw H_z$ is a pseudo-$z$-embedding, we conclude that the image of $s$ in $H^1(W_F,\hat C)$ is trivial, so $s$ can be lifted to an element of $Z(\hat H_z)^\Gamma$. But the set of such lifts is a torsor under $\hat C^\Gamma \subset Z(\hat H_z)^\Gamma$, hence $s_z \in Z(\hat H_z)^\Gamma$ as claimed.

\item The $L$-action of $W_F$ on $\hat{H_z}$ obtained from the extension $\mc{H}_z$ is the same as the $L$-action coming from the rational structure of $H_z$: We have to show that the image of $\sigma \in \Gamma$ in  $\tx{Out}(H_z)$ corresponds via the canonical isomorphism $\tx{Out}(H_z) \cong \tx{Out}(\hat{H_z})$ to the image of $\tx{Ad}(g_\sigma)$ for some $g_\sigma \in {^LG_z}$ mapping to $\sigma$. For this it
is enough to show that the action of $\sigma$ on $R^\vee(T^{H_z},H_z)$ is translated via the isomorphism $T^{H_z} \rightarrow T^{G_z}$ to the action of an element of $w\cdot\sigma$, where $w$ belongs to the Weyl group of $T^{G_z}$. The vertical arrows in the diagram
\[ \xymatrix{
T^{H_z}\ar[r]&T^{G_z}\\
T^H\ar[r]\ar[u]&T^G\ar[u]
} \]
induce bijections of root- and coroot-systems and Weyl-groups. Since the assertion holds the bottom map, it also holds the top.
\end{enumerate}

\end{proof}

There is an inverse construction as well. Let $\mf{\dot e}_z=(H_z,\mc{H}_z,\dot s_z,\xi_z)$ be a refined endoscopic datum for $G_z$. Then $\xi_z(\hat H_z)$ contains the central torus $\hat
C$ and this gives an injection $\hat C \rightarrow \hat{H_z}$, which dually provides a surjection $H_z \rightarrow C$. Let $H = \tx{ker}(H_z \rightarrow C)$,
$\hat{H} = \tx{im}(\hat H_z \rightarrow \hat G)$, let $\dot s$ be the image of $\dot s_z$ under  $Z(\hat{\bar H_z}) \rightarrow Z(\hat{\bar H})$
and $\xi$ be the composition of $\xi_z$ with the canonical projection $\hat{G_z} \rw \hat G$. One checks that $(H,\mc{H},\dot s,\xi)$ is a refined endoscopic datum for $G$ in a way similar to the above argument.

\begin{fct} \label{fct:mib_ed} The above constructions provide mutually inverse bijections between the isomorphism classes of refined endoscopic data for $G$ and those for $G_z$. \end{fct}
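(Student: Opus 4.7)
The plan is to verify that the two constructions yield mutually inverse maps on isomorphism classes of refined endoscopic data, which should reduce to tracing through the push-out/pull-back definitions together with the basic surjectivity $Z(G_z)(F) \srw C(F)$ from Fact \ref{fct:surj}. I would begin by addressing well-definedness: in the forward construction the triple $(H_z, \mc{H}_z, \xi_z)$ depends only on $(H, \mc{H}, \xi)$, and the only genuine choice is the lift $\dot s_z \in Z(\hat{\bar G_z})^+$ of $\dot s$. Any two such lifts differ by an element of $\hat C \subset Z(\hat{\bar H_z})^+$, and since $\hat C$ is central, translating $\dot s_z$ by such an element produces an isomorphic refined endoscopic datum, so the class of $\mf{\dot e}_z$ is independent of choices. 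The inverse construction is manifestly canonical provided one defines $\mc{H} := \mc{H}_z/\hat C$ as the natural companion to $\hat H = \tx{im}(\hat H_z \rw \hat G)$.

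Next, I would verify that the composition forward-then-inverse is the identity on the class of $(H, \mc{H}, \dot s, \xi)$: the push-out $H_z = H \times_{Z(G)} Z(G_z)$ satisfies $H_z/H = Z(G_z)/Z(G) = C$, so $\ker(H_z \rw C) = H$; dualizing the pull-back square defining $\mc{H}_z$ shows $\hat C \hrw \hat H_z$ with quotient $\hat H$, and composing $\xi_z$ with ${^LG_z} \rw {^LG}$ recovers $\xi$; and $\dot s_z$ maps to $\dot s$ by construction. The original quadruple is thereby recovered on the nose.

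For the reverse composition, starting from $(H_z, \mc{H}_z, \dot s_z, \xi_z)$, the heart of the matter will be to exhibit a canonical isomorphism $H \times_{Z(G)} Z(G_z) \rw H_z$ given by $(h, z) \mapsto h \cdot z$. Its surjectivity should follow from Fact \ref{fct:surj} applied to $G \rw G_z$, which gives $Z(G_z)(F) \srw C(F)$ and hence $Z(G_z) \cdot H = H_z$ via the induced map to $H_z/H = C$; injectivity reduces to $H \cap Z(G_z) = Z(G)$ inside $H_z$, which holds because $H$ is defined as $\ker(H_z \rw C)$, so $Z(G_z) \cap H$ is exactly the kernel of $Z(G_z) \rw C$. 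The pull-back of $\mc{H} := \mc{H}_z/\hat C$ along ${^LG_z} \rw {^LG}$ should recover $\mc{H}_z$ since both are $\hat C$-extensions of $\mc{H}$ sitting inside ${^LG_z}$ over the fixed map $\mc{H} \rw {^LG}$, and the tautological comparison is the identity on $\hat C$ and on the quotient $\mc{H}$. Any new lift of $\dot s$ produced by the forward step differs from the original $\dot s_z$ by an element of $\hat C$, which gives an isomorphic refined datum.

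The main obstacle I expect is the subtle compatibility of the canonical central embeddings $Z(G) \hrw Z(H)$ and $Z(G_z) \hrw Z(H_z)$ that are implicit in both constructions. Specifically, one must check that the embedding $Z(G_z) \hrw Z(H_z)$ coming from the $G_z$-datum $(H_z, \mc{H}_z, \dot s_z, \xi_z)$ restricts under $Z(G) \hrw Z(G_z)$ to the embedding $Z(G) \hrw Z(H)$ coming from the reconstructed $G$-datum — this is what makes both the push-out $H \times_{Z(G)} Z(G_z) \rw H_z$ well-defined and the intersection identity $H \cap Z(G_z) = Z(G)$ meaningful. The compatibility should fall out of the centrality of $\hat C$ in $\hat G_z$ together with the commutativity of the dual-group diagrams, but it is the place where the argument genuinely uses that $G \rw G_z$ is a pseudo-$z$-embedding rather than an arbitrary central extension.
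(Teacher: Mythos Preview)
Your argument for well-definedness of the forward map contains a genuine gap. You assert that two lifts $\dot s_z$ of $\dot s$ differ by an element of $\hat C \subset Z(\hat{\bar H_z})^+$, and that centrality of $\hat C$ then forces the resulting refined data to be isomorphic. Both steps fail. First, the inclusion $\hat C \subset Z(\hat{\bar H_z})^+$ is false in general: an element $c \in \hat C$ maps to $c \in Z(\hat H_z)$, and membership in $Z(\hat{\bar H_z})^+$ requires this image to be $\Gamma$-fixed, so only $\hat C^\Gamma$ lies in $Z(\hat{\bar H_z})^+$. Second, and more seriously, centrality alone does not produce an isomorphism of \emph{refined} endoscopic data: an isomorphism may only alter $\dot s$ by an element of $Z(\hat{\bar H_z})^{+,\circ}$, so what matters is the image of $\dot s_z$ in $\pi_0(Z(\hat{\bar H_z})^+)$, and translating by a central element that is not in the identity component genuinely changes the isomorphism class.

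The paper's argument is different and uses precisely the hypothesis you do not invoke: the valid lifts form a torsor under $\hat C^\Gamma$, and $\hat C^\Gamma$ is \emph{connected} because $H^1(F,C)=1$ (this vanishing is part of the definition of a pseudo-$z$-embedding, and by Kottwitz duality $\pi_0(\hat C^\Gamma)^* \cong H^1(F,C)$). Connectedness of $\hat C^\Gamma$ forces it into $Z(\hat{\bar H_z})^{+,\circ}$, so all valid lifts have the same image in $\pi_0(Z(\hat{\bar H_z})^+)$. This is where the pseudo-$z$-embedding hypothesis actually enters the proof of the Fact; your identification of the ``main obstacle'' as the compatibility of the embeddings $Z(G) \hookrightarrow Z(H)$ and $Z(G_z) \hookrightarrow Z(H_z)$ is a side issue by comparison. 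Your treatment of the two compositions being mutually inverse is more detailed than the paper's (which simply calls it ``straightforward'') and looks fine, but you should also note that the surjectivity of $Z(G_z) \rw C$ needed for $H \times_{Z(G)} Z(G_z) \cong H_z$ holds already at the level of algebraic groups and does not require Fact~\ref{fct:surj}.
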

\begin{proof}
The assignment $(H_z,\mc{H}_z,\dot s_z,\xi_z) \mapsto (H,\mc{H},\dot s,\xi)$ is a map between the two sets of isomorphism classes. We claim that the assignment $(H,\mc{H},\dot s,\xi) \rw (H_z,\mc{H}_z,\dot s_z,\xi_z)$ is also a map. We needed to choose a lift $\dot s_z$ of $\dot s$. The set of choices is a torsor under $\hat C^\Gamma$, which is a connected subgroup of $Z(\hat{\bar H_z})^+$ (because $1=H^1(F,C)=\pi_0(\hat C^\Gamma)^*$). The image of $\dot s_z$ in $\pi_0(Z(\hat{\bar H_z})^+)$ is thus uniquely determined by $\dot s$ and the claim is proved. Checking that the two maps are inverses of each other is straightforward.
\end{proof}

\begin{lem} Let $(H,s,\mc{H},\xi)$ and $(H_z,s_z,\mc{H}_z,\xi_z)$ correspond under the mutually inverse bijections. Then either both are elliptic or both are not. \end{lem}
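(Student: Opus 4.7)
The plan is to reformulate ellipticity as a numerical condition on the dimensions of identity components of Galois-fixed centers, and then verify that the $z$-embedding contributes the same correction to both sides. I would first recall the standard equivalent form: $\mf{\dot e}$ is elliptic iff $Z(\hat H)^{\Gamma,\circ}=Z(\hat G)^{\Gamma,\circ}$. Since the inclusion $Z(\hat G)^{\Gamma,\circ}\subset Z(\hat H)^{\Gamma,\circ}$ holds automatically in an endoscopic datum, this is equivalent to the numerical equality $\dim Z(\hat H)^{\Gamma,\circ}=\dim Z(\hat G)^{\Gamma,\circ}$, and the same for $\mf{\dot e}_z$.

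Next I would produce two short exact sequences of $\Gamma$-equivariant complex diagonalizable groups
\[ 1\rw\hat C\rw Z(\hat G_z)\rw Z(\hat G)\rw 1,\qquad 1\rw\hat C\rw Z(\hat H_z)\rw Z(\hat H)\rw 1. \]
Both arise from the surjection $\hat G_z\rw \hat G$ with central kernel $\hat C$: because $\hat C$ is central in $\hat G_z$, $Z(\hat G_z)$ is the full preimage of $Z(\hat G)$, and similarly $\hat C\subset \hat H_z$ is the kernel of $\hat H_z\rw \hat H$ (by construction of the bijection between data, $\hat H_z$ sits in the pullback $\mc H_z\to\mc H$ along $\xi$), so $Z(\hat H_z)$ is the full preimage of $Z(\hat H)$.

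Now I would take $\Gamma$-invariants and pass to identity components. The essential input is that $\hat C^\Gamma$ is connected, which follows from $H^1(F,C)=1$ (the second axiom of a pseudo-$z$-embedding) via Kottwitz's duality $H^1(F,C)=\pi_0(\hat C^\Gamma)^\vee$. Working on rational cocharacter lattices, where $H^1$ of the finite quotient of $\Gamma$ acting on a finitely generated module dies, each of the two exact sequences yields
\[ \dim Z(\hat G_z)^{\Gamma,\circ}=\dim Z(\hat G)^{\Gamma,\circ}+\dim\hat C^\Gamma,\qquad \dim Z(\hat H_z)^{\Gamma,\circ}=\dim Z(\hat H)^{\Gamma,\circ}+\dim\hat C^\Gamma. \]
Subtracting cancels the $\dim\hat C^\Gamma$ terms and gives $\dim Z(\hat H)^{\Gamma,\circ}-\dim Z(\hat G)^{\Gamma,\circ}=\dim Z(\hat H_z)^{\Gamma,\circ}-\dim Z(\hat G_z)^{\Gamma,\circ}$. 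The left side vanishes iff $\mf{\dot e}$ is elliptic and the right iff $\mf{\dot e}_z$ is, so the two conditions coincide.

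The delicate point is the dimension relation: even though taking $\Gamma$-invariants of a short exact sequence of tori need not be right exact on the nose, after tensoring cocharacters with $\Q$ the relevant $H^1$ terms vanish, yielding exactness of rational cocharacter modules and hence equality of ranks. All the other steps are formal manipulations once one commits to the numerical version of ellipticity.
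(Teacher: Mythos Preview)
Your proof is correct, but it takes a genuinely different route from the paper. You work on the dual side, using the standard ellipticity criterion $\dim Z(\hat H)^{\Gamma,\circ}=\dim Z(\hat G)^{\Gamma,\circ}$ together with the two exact sequences $1\to\hat C\to Z(\hat G_z)\to Z(\hat G)\to 1$ and $1\to\hat C\to Z(\hat H_z)\to Z(\hat H)\to 1$, and then count dimensions after tensoring cocharacters with $\Q$. The paper instead stays on the $G$-side: it notes that the square with vertices $Z(G)$, $Z(H)$, $Z(G_z)$, $Z(H_z)$ is cocartesian by the very construction of $H_z$, applies the contravariant left-exact functor $X^*(-)^\Gamma$ to obtain a cartesian square, and reads off an isomorphism between $\ker\big(X^*(Z(H_z))^\Gamma\to X^*(Z(G_z))^\Gamma\big)$ and $\ker\big(X^*(Z(H))^\Gamma\to X^*(Z(G))^\Gamma\big)$, finiteness of which is the ellipticity condition. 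The paper's argument is a touch slicker in that it exploits the pushout definition of $H_z$ directly and yields a stronger conclusion (an actual isomorphism of the relevant kernels, not just equality of their ranks); your argument has the virtue of starting from the formulation of ellipticity most readers will recognize. One small remark: the connectedness of $\hat C^\Gamma$ that you flag as ``essential'' is not actually needed for the dimension count, since $\dim\hat C^\Gamma=\dim\hat C^{\Gamma,\circ}$ regardless; your rational-cocharacter argument already gives the additivity of dimensions without it.
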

 \begin{proof} By construction of $H_z$ the left square below is cocartesian, and applying the left-exact functor $X^*(-)^\Gamma$ we obtain the right square below, which
is then cartesian.
\[ \xymatrix{
Z(G)\ar[r]\ar[d]&Z(H)\ar[d]&&&X^*(Z(H_z))^\Gamma\ar[r]\ar[d]&X^*(Z(H))^\Gamma\ar[d]\\
Z(G_z)\ar[r]&Z(H_z)&&&X^*(Z(G_z))^\Gamma\ar[r]&X^*(Z(G))^\Gamma
} \]
The top map of the right diagram provides an isomorphism
\[ \tx{ker}\left(X^*(Z(H_z))^\Gamma \rw X^*(Z(G_z))^\Gamma\right) \rw \tx{ker}\left(X^*(Z(H))^\Gamma \rw X^*(Z(G))^\Gamma\right) \]
The lemma now follows from the fact that ellipticity of $H_z$ resp. $H$ is equivalent to the corresponding kernel being finite.
\end{proof}

We continue with $\mf{\dot e}_z=(H_z,\mc{H}_z,\dot s_z,\xi_z)$ and $\mf{\dot e}=(H,\mc{H},\dot s,\xi)$ corresponding under the mutually inverse bijections. Let $\mf{z}_{z}=(H_{z,1},\xi_{z,1})$ be a $z$-pair for $\mf{\dot e}_z$. Thus $H_{z,1} \rw H_z$ is a $z$-extension and $\xi_{z,1} : \mc{H}_z \rw {^LH_{z,1}}$ is an $L$-embedding extending the embedding $\hat H_z \rw \hat H_{z,1}$. From $\mf{z}_z$ we can construct a $z$-pair $\mf{z}=(H_1,\xi_1)$ for $\mf{\dot e}$ as follows: $H_1$ is the fiber product of $H_{z,1}$ and $H$ over $H_z$, and $\xi_1 : \mc{H} \rw {^LH_1}$ is the unique factoring of
\[ \mc{H}_z \stackrel{\xi_{z,1}}{\lrw} {^LH_{z,1}} \rw {^LH_1} \]
through $\mc{H}_z \rw \mc{H}$.

\begin{lem}\label{lem:mib_zp} \ \\[-20pt]
\begin{enumerate}
\item $H_1 \rw H_{z,1}$ is a pseudo-z-embedding with cokernel $C$.
\item $H_{z,1} \rw H_z$ is a z-extension with kernel $K$.
\item $(H_1,\xi_1)$ is a $z$-pair for $\mf{\dot e}$ and the map $(H_{z,1},\xi_{z,1}) \mapsto (H_1,\xi_1)$ from the set of $z$-pairs for $\mf{\dot e}_z$ to the set of $z$-pairs for $\mf{\dot e}$ has fibers which are torsors for $Z^1(W_F,\hat C)$ acting on the second component of the $z$-pairs by pointwise multiplication.
\end{enumerate}
\end{lem}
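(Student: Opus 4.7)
The backbone of the argument is the fiber-product description $H_1 = H_{z,1} \times_{H_z} H$. Since $H \hookrightarrow H_z$ is a closed immersion, pulling back shows that $H_1 \hookrightarrow H_{z,1}$ is also a closed immersion and that the second projection $H_1 \twoheadrightarrow H$ is surjective with kernel equal to $K := \ker(H_{z,1} \to H_z)$. Moreover $H_{z,1}/H_1 \cong H_z/H = C$. I would begin by setting these isomorphisms in place, so that later parts amount to transporting properties across two sides of the same cartesian square.

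For part (1), I verify the three axioms of a pseudo-$z$-embedding for $H_1 \hookrightarrow H_{z,1}$: the cokernel equals $C$ (a torus); $H^1(F,C) = 1$ since $H \to H_z$ is assumed pseudo-$z$. The bijectivity of $H^1(F,Z(H_1)) \to H^1(F,Z(H_{z,1}))$ comes from the long exact sequence attached to $1 \to Z(H_1) \to Z(H_{z,1}) \to C \to 1$: surjectivity uses $H^1(F,C)=1$, injectivity requires surjectivity of $Z(H_{z,1})(F) \to C(F)$, which factors as $Z(H_{z,1})(F) \twoheadrightarrow Z(H_z)(F) \twoheadrightarrow C(F)$, where the first map is surjective because $H_{z,1} \to H_z$ is a $z$-extension (so $K$ is central in $H_{z,1}$) and the second by Fact~\ref{fct:surj} applied to the pseudo-$z$-embedding $H \to H_z$.

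Part (2) reads to me as notational: $K$ is introduced as the kernel of the $z$-extension $H_{z,1} \to H_z$, which we are given. However, for part (3) to make sense one also needs $H_1 \to H$ to be a $z$-extension with kernel $K$. This I would verify as follows. The kernel of $H_1 \to H$ equals $K$ by the fiber product identification; $K$ is an induced torus and is central in $H_1$ because it is central in $H_{z,1}$ and $H_1 \hookrightarrow H_{z,1}$. For simple-connectedness of the derived group: $H_{z,1}^{\mathrm{der}} \subset H_1$ because $H_{z,1}/H_1 = C$ is a torus; on the other hand $H_1 / H_{z,1}^{\mathrm{der}}$ embeds into the torus $H_{z,1}/H_{z,1}^{\mathrm{der}}$ and so is itself a torus, forcing $H_1^{\mathrm{der}} = H_{z,1}^{\mathrm{der}}$, which is simply connected.

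For part (3), I first check that $\xi_1$ is well-defined: the composition $\mc{H}_z \xrightarrow{\xi_{z,1}} {}^L H_{z,1} \twoheadrightarrow {}^L H_1$ sends the kernel $\hat C$ of $\mc{H}_z \to \mc{H}$ to the kernel $\hat C$ of $\hat H_{z,1} \to \hat H_1$ (by construction $\xi_{z,1}$ restricts on $\hat C$ to the natural inclusion, and dualizing $H_1 \hookrightarrow H_{z,1}$ with cokernel $C$ gives $\hat H_{z,1} \twoheadrightarrow \hat H_1$ with kernel $\hat C$). So it descends to an $L$-embedding $\xi_1: \mc{H} \to {}^L H_1$ extending $\hat H \to \hat H_1$, whence $(H_1,\xi_1)$ is a $z$-pair for $\mf{\dot e}$. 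For the torsor statement I note that $H_{z,1}$ can be reconstructed from $H_1$ as the push-out $H_1 \times_{Z(H)} Z(H_z)$ (argued as in Corollary~\ref{cor:zembex} and Fact~\ref{fct:zin_filt}), so the fiber of the assignment $(H_{z,1},\xi_{z,1}) \mapsto (H_1,\xi_1)$ is really the set of liftings of $\xi_1$ to $\xi_{z,1}$. Using the exact sequence $1 \to \hat C \to {}^L H_{z,1} \to {}^L H_1 \to 1$ of continuous $W_F$-equivariant groups, any two such lifts differ by a continuous $1$-cocycle valued in $\hat C$, giving the $Z^1(W_F,\hat C)$-torsor structure by pointwise multiplication.

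The main obstacle I anticipate is part (3): keeping careful track of which $L$-groups, dual-group exact sequences, and $W_F$-actions are in play at each step, and in particular verifying that the lifting problem for $\xi_1$ to $\xi_{z,1}$ really has the structure of a $Z^1(W_F,\hat C)$-torsor (as opposed to a cohomology torsor), which hinges on the requirement that both $\xi_{z,1}$ and $\xi_{z,1}'$ restrict to the \emph{same} inclusion $\hat C \hookrightarrow \hat H_{z,1}$.
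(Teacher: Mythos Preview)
Your argument tracks the paper's closely. Parts (1) and (2), together with your verification that $H_1 \to H$ is a $z$-extension (via $H_1^{\mathrm{der}} = H_{z,1}^{\mathrm{der}}$), match the paper's reasoning; your treatment of the bijectivity of $H^1(F,Z(H_1)) \to H^1(F,Z(H_{z,1}))$ through the surjectivity of $Z(H_{z,1})(F)\to Z(H_z)(F)\to C(F)$ is in fact stated more carefully than the paper's, which only records the surjectivity of $H_{z,1}(F)\to C(F)$.

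There is one genuine problem in part (3): your push-out $H_1 \times_{Z(H)} Z(H_z)$ is not well defined. Forming it would require a map $Z(H) \to H_1$, but the map runs the other way --- we have $Z(H_1) \twoheadrightarrow Z(H)$ because $H_1 \to H$ is a surjection with central kernel $K$, not an embedding. This is exactly the opposite of the situation in Fact~\ref{fct:zin_filt}, where $Z(G) \hookrightarrow G_1$ comes from the embedding $G \hookrightarrow G_1$. So the reconstruction of $H_{z,1}$ from $H_1$ you propose does not go through as written. The paper's proof does not attempt to pin down $H_{z,1}$ from $H_1$ either; it simply records that both $\ker(\mc{H}_z \to \mc{H})$ and $\ker({}^LH_{z,1} \to {}^LH_1)$ equal $\hat C$ and infers the torsor structure from that. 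Your final paragraph already proves precisely this: for a fixed $H_{z,1}$, the set of $\xi_{z,1}$ lifting a given $\xi_1$ is a $Z^1(W_F,\hat C)$-torsor under pointwise multiplication, because any two lifts agree on $\hat H_{z,1}$ and hence differ by a continuous $1$-cocycle $W_F \to \hat C$. So the essential content is correct; just drop the push-out justification for the uniqueness of $H_{z,1}$, which is neither needed nor available.
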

\begin{proof}
By construction we have the diagram
\[ \xymatrix{
&&1\ar[d]&1\ar[d]\\
1\ar[r]&K\ar[r]\ar@{=}[d]&H_1\ar[r]\ar[d]&H\ar[r]\ar[d]&1\\
1\ar[r]&K\ar[r]&H_{z,1}\ar[r]\ar[d]&H_z\ar[r]\ar[d]&1\\
&&C\ar@{=}[r]\ar[d]&C\ar[d]\\
&&1&1
} \]
We see that $H_1$ embeds into $H_{z,1}$ with abelian quotient, so the derived group of $H_1$ equals that of $H_{z,1}$. This shows that $H_1$ is a $z$-extension of $H$. To prove that $H_1 \rw H_{z,1}$ is a pseudo-$z$-embedding, we only need to show that $H_{z,1}(F) \rw C(F)$ is surjective. This follows from the surjectivity of $H_{z,1}(F) \rw H_z(F)$ (which relies on the fact that $K$ is induced), and the surjectivity of $H_z(F) \rw C(F)$ (as $H \rw H_z$ is a pseudo-z-embedding). The final point now follows easily from the fact that the kernels of $\mc{H}_z \rw \mc{H}$ and $^LH_{z,1} \rw {^LH_1}$ are both equal to $\hat C$.
\end{proof}

Recall that we are assuming that $G$, and hence also $G_z$, is quasi-split. The bijection of Fact \ref{fct:zin_tor} extends to a bijection between the sets of splittings of $G$ and $G_z$, as well as a bijection between the sets of Whittaker data for $G$ and $G_z$.

\begin{lem} \label{lem:zin_tf} Let $\mf{\dot e}_z=(H_z,\mc{H}_z,\dot s_z,\xi_z)$ and $\mf{\dot e}=(H,\mc{H},\dot s,\xi)$ be refined endoscopic data for $G_z$ and $G$, whose equivalence classes correspond via the bijections of Fact \ref{fct:mib_ed}. Let $\mf{z}_z=(H_{z,1},\xi_{z,1})$ and $\mf{z}=(H_1,\xi_1)$ be $z$-pairs for $\mf{\dot e}_z$ and $\mf{\dot e}$, corresponding as in Lemma \ref{lem:mib_zp}. Let $\psi : G \rightarrow G'$ and $\psi_z: G_z \rightarrow G_z'$ be
compatible inner twists, as in Fact \ref{fct:zin_twist}. Let $x \in Z^1(u \rw W,Z \rw G)$ map to $\psi^{-1}\sigma(\psi) \in Z^1(\Gamma,G_\tx{ad})$. Let $\mf{w}_z$ and $\mf{w}$ be Whittaker data that correspond to each other.

If $\gamma_1 \in H_1(F)$ and $\delta' \in G'(F)$ are strongly $G$-regular elements, then
\[ \Delta'[\mf{w}_z,\mf{\dot e}_z,\mf{z}_z,(\psi_z,x)](\gamma_1,\delta') = \Delta'[\mf{w},\mf{\dot e},\mf{z},(\psi,x)](\gamma_1,\delta'). \]
\end{lem}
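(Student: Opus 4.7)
The plan is to reduce the comparison to a single $\delta \in G(F) \subset G_z(F)$ stably conjugate to $\delta'$ simultaneously via $\psi$ and $\psi_z$, and then compare the two multiplicative factors in formula \eqref{eq:ritf} term by term. By Steinberg's theorem applied to $G$ there exists $\delta \in G(F)$ with $\psi^{-1}(\delta')$ in the $G(\ol F)$-conjugacy class of $\delta$; choose $g \in G(\ol F)$ with $\delta' = \psi(g\delta g^{-1})$. Since $\psi_z$ restricts to $\psi$ on $G$, the same pair $(\delta,g)$ witnesses stable conjugacy inside $G_z$. It thus suffices to prove separately the equalities $\Delta'_{\mf{w}_z}(\gamma_1,\delta) = \Delta'_\mf{w}(\gamma_1,\delta)$ and $\<\tx{inv}[x](\delta,\delta'),\dot s_{z,\gamma,\delta}\> = \<\tx{inv}[x](\delta,\delta'),\dot s_{\gamma,\delta}\>$.

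For the first equality (the Whittaker-normalized part), the structural inputs are that $G_{z,\tx{der}}=G_\tx{der}$ and $H_{z,1,\tx{der}}=H_{1,\tx{der}}$ (since the cokernels $G_z/G$ and $H_{z,1}/H_1$ are tori), the maximal tori $T \leftrightarrow T_z$ and $T^H \leftrightarrow T^{H_z}$ correspond via Fact \ref{fct:zin_tor}, and the Whittaker datum $\mf w_z$ maps to $\mf w$ under the splitting bijection recalled just before the lemma. The admissible isomorphism $T^{H_z} \rw T^{G_z}$ is the push-out of $T^H \rw T^G$ along the central extensions, so the absolute root systems, $a$-data, $\chi$-data, and the complex of cocharacters defining the $\epsilon$-factor $\epsilon_L(V_{G,H})$ coincide with their $z$-embedded analogues. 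Each of the constituent factors of $\Delta'_\mf w$ (that is, $\epsilon_L$ together with $\Delta_I,\Delta_{II},\Delta_{IV}$ in the notation of \cite{KS99},\cite{KS12}) is then literally identical for $(G,H,\mf w)$ and $(G_z,H_z,\mf w_z)$.

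For the second equality (the invariant pairing), observe that the cocycle $w \mapsto g^{-1}x(w)\sigma_w(g)$ represents $\tx{inv}[x](\delta,\delta')$ inside both $S=\tx{Cent}(\delta,G)$ and $S_z = Z(G_z)^\circ \cdot S = \tx{Cent}(\delta,G_z)$; by Fact \ref{fct:zin_cohbij} the natural map $H^1(u\rw W,Z\rw S) \rw H^1(u\rw W,Z\rw S_z)$ is a bijection and carries the $G$-invariant to the $G_z$-invariant. On the dual side, the pullback construction of $\mc H_z$ and the push-out construction of $H_z$ force $\dot s_z$ to project to $\dot s$ under $Z(\hat{\bar H_z})^+ \rw Z(\hat{\bar H})^+$, and the lifted admissible isomorphism $Z(\hat{\bar H_z}) \rw \hat{\bar S_z}$ sits in a commutative square with $Z(\hat{\bar H}) \rw \hat{\bar S}$. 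Functoriality of the pairing \eqref{eq:riduals} with respect to the torus homomorphism $S \rw S_z$ then yields the required equality of pairings.

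The main obstacle is the last functoriality assertion: one must verify that the canonical map $\hat{\bar S_z} \rw \hat{\bar S}$ dual to $S \rw S_z$ intertwines \eqref{eq:riduals} with the Fact \ref{fct:zin_cohbij} bijection on the left factor and fits into a commutative square with the lifted admissible isomorphisms. This is a diagram chase through the constructions of Section \ref{sec:compgerb}, using that $Z(\hat{\bar H_z}) \rw Z(\hat{\bar H})$ is surjective with kernel $\hat C$ and that the finite subgroups $Z_n$ embed compatibly in $Z(G) \subset Z(G_z)$ and in $Z(H) \subset Z(H_z)$; everything else in the argument is a routine application of the facts established in Subsection \ref{sub:zemb}.
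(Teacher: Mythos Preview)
Your overall strategy matches the paper's own proof: choose $\delta \in G(F)$ stably conjugate to $\delta'$, split the transfer factor via \eqref{eq:ritf}, and compare term by term. The treatment of the invariant-pairing factor is essentially the paper's argument, and the use of Fact~\ref{fct:zin_cohbij} together with functoriality of \eqref{eq:riduals} is correct.

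There is, however, a genuine gap in your treatment of the Whittaker-normalized factor $\Delta'_\mf{w}(\gamma_1,\delta)$. You enumerate its constituents as $\epsilon_L,\Delta_I,\Delta_{II},\Delta_{IV}$, but the decomposition (see \cite[(5.5.2)]{KS12}) also contains the factor $\Delta_{III_2}(\gamma_1,\delta)$, and this is precisely the factor into which the $z$-pair $\mf{z}=(H_1,\xi_1)$ enters. It is \emph{not} automatic that $\Delta_{III_2}$ is unchanged under the $z$-embedding: this factor is built from an $L$-automorphism of ${^LT^{H_1}}$ determined by a commutative diagram involving $\xi$, $\xi_1$, and the Langlands--Shelstad embeddings ${^LT}\hookrightarrow{^LG}$ and ${^LT^{H_1}}\hookrightarrow{^LH_1}$ constructed from the $\chi$-data. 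The paper's proof handles this by observing that the $z$-embedded version of that diagram surjects onto the original with kernel $\hat C$ at every node, so the resulting parameter $a_z:W_F\to{^LT^{H_{z,1}}}$ projects to $a:W_F\to{^LT^{H_1}}$; functoriality of the Langlands pairing and the fact that $\gamma_1$ lies in the subgroup $T^{H_1}(F)\subset T^{H_{z,1}}(F)$ then give the equality. Your argument as written does not address this factor at all.

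A smaller point: the $\epsilon$-factors are not ``literally identical''. The virtual $\Gamma$-representations $X^*(T_{0,z})_\C - X^*(T_0^{H_z})_\C$ and $X^*(T_0)_\C - X^*(T_0^H)_\C$ differ by $X^*(C)_\C - X^*(C)_\C$, which is zero as a virtual representation; the equality of $\epsilon$-factors then follows from additivity \cite[(3.4.2)]{TateCor}. This is easy but should be said, since the tori themselves are not equal.
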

\begin{proof} Choose $\delta \in G(F)$ which is stably-conjugate to $\delta'$ and let $\gamma$ be the image of $\gamma_1$ in $H(F)$. Let $T' = \tx{Cent}(\delta',G')$, $T=\tx{Cent}(\delta,G)$, $T^{H_1}=\tx{Cent}(\gamma_1,H_1)$, $T^H=\tx{Cent}(\gamma,H)$. Each of the groups $G$, $G'$, $H$, $H_1$ has the corresponding pseudo-$z$-inflation, which we denote by a subscript $z$, and each of the tori $T$, $T'$, $T^H$, $T^{H_1}$ has a torus corresponding under the bijections of Fact \ref{fct:zin_tor}, which we will also denote by a subscript $z$.

We now recall from \eqref{eq:ritf} and \cite[(5.5.2)]{KS12} that $\Delta'[\mf{w},\mf{\dot e},\mf{z},(\psi,x)](\gamma_1,\delta')$ is given by
\[ \epsilon\Delta_I^{-1}(\gamma,\delta)\Delta_{II}(\gamma,\delta)\Delta_{III_2}(\gamma_1,\delta)\Delta_{IV}(\gamma,\delta) \<\tx{inv}[x](\delta,\delta'),\dot s_{\gamma,\delta}\>. \]
Note that we are dealing with untwisted endoscopy and $\Delta_I^\tx{new}$ in \cite{KS12} is the same as the original $\Delta_I$. We will now discuss the individual terms and show that they match the corresponding terms in $\Delta'[\mf{w}_z,\mf{\dot e}_z,\mf{z}_z,(\psi_z,x)](\gamma_1,\delta')$.

Write $\mf{w}=(B_0,\chi_0)$, where $B_0 \subset G$ is a Borel subgroup and $\chi_0$ is a generic character of the $F$-points of the unipotent radical $U_0$ of $B_0$. Extend $B_0$ to a pinning $\tx{spl}=(T_0,B_0,\{X_\alpha\})$ of $G$ and choose a character $\chi_F : F \rw \C^\times$ so that $\chi_0$ corresponds to $\tx{spl}$ and $\chi_F$ as in \cite[\S5.3]{KS99}. Let $T_0^H \subset H$ be a minimal Levi subgroup. Then $\epsilon$ is the Langlands normalization of the $\epsilon$-factor $\epsilon_L(X^*(T_0)\otimes \C - X^*(T_0^H)\otimes \C,\psi_F)$. Analogously, the $\epsilon$-factor in the definition of $\Delta'[\mf{w}_z,\mf{\dot e}_z,\mf{z}_z,(\psi_z,x)](\gamma_1,\delta')$ is given by $\epsilon_L(X^*(T_{0,z})\otimes \C - X^*(T_0^{H_z})\otimes \C,\psi_F)$, where $T_{0,z} \subset G_z$ and $T_0^{H_z} \subset H_z$ correspond to $T_0$ and $T_0^H$ as in Fact \ref{fct:zin_tor}. But then $T_{0,z}$ is an extension of $C$ by $T_0$ and $T_0^{H_z}$ is an extension of $C$ by $T_0^H$. The two epsilon factors above are thus equal, due to their additivity \cite[(3.4.2)]{TateCor}.

For the discussion of the remaining factors, we fix the admissible isomorphism $T^H \rw T$ that sends $\gamma$ to $\delta$. It extends uniquely to an admissible isomorphism $T^{H_z} \rw T_z$. We furthermore choose $a$-data and $\chi$-data for $T$. Since they depend only on the roots, they work equally well for $T_z$.

The factor $\Delta_I(\gamma,\delta)$ depends on the admissible isomorphism, the splitting $\tx{spl}$ and $a$-data. Since its construction involves only the preimage of $T$ in $G_\tx{sc}$, which is the same as the preimage of $T_z$ in $G_{z,\tx{sc}}=G_\tx{sc}$, we see that this factor matches the corresponding factor for $G_z$.

The factors $\Delta_{II}$ and $\Delta_{IV}$ are also immediately seen to match their counterparts in $G_z$, because they only depend on the chosen $\chi$-data and the root-values of $\delta$.

The factor $\Delta_{III_2}(\gamma_1,\delta)$ needs closer attention. We recall briefly its construction, following loosely \cite[\S4.4]{KS99} but specializing to the non-twisted setting at hand. We have chosen $\chi$-data for $T$, which we transport via the chosen admissible isomorphism $T^H \rw T$ to obtain $\chi$-data for $T^H$. The surjection $T^{H_1} \rw T^H$ induces an $L$-embedding $^LT \rw {^LT^{H_1}}$. It also induces a bijection on the root systems, so we also obtain $\chi$-data for $T^{H_1}$. These $\chi$-data provide, according to the procedure of \cite[\S2.6]{LS87}, admissible $L$-embeddings $^LT \rw {^LG}$ and ${^LT^{H_1}} \rw {^LH_1}$. The admissible isomorphism $T^H \rw T$ induces an $L$-isomorphism $^LT \rw {^LT^H}$. We obtain the following diagram
\begin{equation} \label{dia:del32}
\xymatrix{
^LH_1&^LT^{H_1}\ar@{_(->}[l]&^LT^{H_1}\ar@{..>}[l]&^LT^H\ar@{_(->}[l]\\
\mc{H}\ar@{^(->}[u]^{\xi_1}\ar@{^(->}[r]^{\xi}&^LG&&^LT\ar@{_(->}[ll]\ar[u]_\cong
} \end{equation}
The dotted arrow is defined to be the unique $L$-automorphism of $^LT^{H_1}$ extending the identity on $\hat T^{H_1}$ and making the diagram commutative. The restriction of this $L$-automorphism to $W_F$ is then a Langlands parameter $a : W_F \rw {^LT^{H_1}}$  and
\[ \Delta_{III_2}(\gamma^{H_1},\gamma)= \<a,\gamma^{H_1}\>, \]
where $\<\cdot,\cdot\>$ is the Langlands duality pairing.
The construction of the term $\Delta_{III_2}$ contributing to $\Delta'[\mf{w}_z,\mf{\dot e}_z,\mf{z}_z,(\psi_z,x)](\gamma_1,\delta')$ is the same, but involves the analog of diagram \eqref{dia:del32} where all objects and arrows have subscript $z$. This latter diagram surjects onto \eqref{dia:del32}, with the kernel at each node being $\hat C$. In particular, we see that the composition of $a_z : W_F \rw {^LT^{H_{z,1}}}$ with the natural projection ${^LT^{H_{z,1}}} \rw {^LT^{H_1}}$ is equal to $a$. The functoriality of the pairing $\<\cdot,\cdot\>$ and the fact that $\gamma_1$ belongs to the subgroup $T^{H_1}(F)$ of $T^{H_{z,1}}(F)$ now implies that the two versions of $\Delta_{III_2}$ agree.

The final term to be compared is $\<\tx{inv}[x](\delta,\delta'),\dot s_{\gamma,\delta}\>$. Here $\tx{inv}[x](\delta,\delta') \in H^1(u \rw W,Z \rw T)$. It is a direct observation that mapping this element into $H^1(u \rw W,Z \rw T_z)$ gives the same result as mapping $\delta$ and $\delta'$ into $T_z(F)$ and $T_z'(F)$ and then computing $\tx{inv}[x](\delta,\delta')$. At the same time, tracing through the definition of $\dot s_{\gamma,\delta}$ we see that it is the image of $\dot s_{z,\gamma,\delta}$ under the projection $\hat{\bar T_z} \rw \hat{\bar T}$. The functoriality of the duality pairing $\<-,-\>$ completes the proof.
\end{proof}

\subsection{Comparison of $\tx{LLC}_\tx{rig}(\psi,x_\tx{rig})$ and $\tx{LLC}_\tx{rig}(\psi_z,x_\tx{rig})$}
We now assume that for all inner twists $\tilde\psi : \tilde G \rw \tilde G'$ of connected reductive quasi-split groups with connected center, and all $\tilde x_\tx{rig} \in Z^1(u \rw W,Z(\tilde G) \rw \tilde G)$ lifting $\tilde\psi^{-1}\sigma(\tilde\psi)$, the statement $\tx{LLC}_\tx{rig}(\tilde\psi,\tilde x)$ holds. Furthermore, we assume the following natural compatibility. If $\tilde G \rw \tilde G_z$ is a pseudo-$z$-embedding into a group $\tilde G_z$ with connected center and $\tilde\psi_z : \tilde G_z \rw \tilde G_z'$ is the inner twist compatible with $\tilde\psi$ as in Fact \ref{fct:zin_twist}, then for any tempered parameter $\tilde\varphi_z : L_F \rw {^L\tilde G_z}$ with corresponding $\tilde\varphi : L_F \rw {^L\tilde G}$ restriction of representations provides a bijection $\Pi_{\tilde\varphi_z}(\tilde G_z') \rw \Pi_{\tilde\varphi}(\tilde G')$ and this bijection is compatible with the bijection $\pi_0(S_{\tilde\varphi_z}^+) \rw \pi_0(S_{\tilde\varphi}^+)$ (see below for an argument about why the second map is bijective).

Under this assumption, we will show that $\tx{LLC}_\tx{rig}(\psi,x_\tx{rig})$ holds for any connected reductive quasi-split group $G$ with fixed Whittaker datum $\mf{w}$, inner twist $\psi : G \rw G'$, and $x_\tx{rig} \in Z^1(u \rw W,Z(G) \rw G)$ lifting $\psi^{-1}\sigma(\psi)$. For this, we choose a pseudo-$z$-embedding $G \rw G_z$ such that $G_z$ has connected center. This is possible by Corollary \ref{cor:zembex}. Let $\mf{w}_z$ be the Whittaker datum for $G_z$ determined by $\mf{w}$. Let $\psi_z : G_z \rw G_z'$ be the inner twist corresponding to $\psi$ as in Fact \ref{fct:zin_twist}.

Let $\varphi : L_F \rw {^LG}$ be a tempered Langlands parameter. Choose a tempered Langlands parameter $\varphi_z : L_F \rw {^LG_z}$ lifting $\varphi$. It exists by Corollary \ref{cor:zin_liftpar}. Let $\Pi_{\varphi_z}(G_z')$ be the corresponding tempered $L$-packet. All elements of $\Pi_{\varphi_z}(G_z')$ have the same central character and Fact \ref{fct:surj} implies that restriction to $G'(F)$ provides an injective map $\Pi_{\varphi_z}(G_z') \rw \Pi_\tx{temp}(G')$. Define $\Pi_\varphi(G')$ to be the image of this map so that we obtain a bijection
\begin{equation} \label{eq:zin_cmpp} \Pi_{\varphi_z}(G_z') \rw \Pi_\varphi(G'). \end{equation}
Applying Lemma \ref{lem:sseq} to each pseudo-$z$-embedding $G_n \rw G_{z,n}$ and taking the limit we obtain the exact sequence
\[ 1 \rw \hat C^\Gamma \rw S_{\varphi_z}^+ \rw S_\varphi^+ \rw 1. \]
Applying the right-exact functor $\pi_0$ and noting that $\hat C^\Gamma$ is connected we obtain the isomorphism
\begin{equation} \label{eq:zin_cmps} \pi_0(S_{\varphi_z}^+) \rw \pi_0(S_\varphi^+). \end{equation}
From \eqref{eq:zin_cmpp} and \eqref{eq:zin_cmps} we obtain the bijection
\begin{equation} \label{eq:zin_llc}
\tx{Irr}(\pi_0(S_\varphi^+),[x_\tx{rig}]) \rw \Pi_\varphi(G').
\end{equation}
A-priori the packet $\Pi_\varphi(G')$ and the bijection \eqref{eq:zin_llc} could depend on the choice of lift $\varphi_z$ of $\varphi$, as well as on the choice of $z$-embedding $G \rw G_z$. We claim that this is not the case. Indeed, any other lift of $\varphi$ is of the form $\varphi_z \cdot \varphi_c$ for some $\varphi_c \in Z^1(W_F,\hat C)$. Then $\pi_z \mapsto \pi_z \otimes \chi_c$ is a bijection $\Pi_{\varphi_z}(G_z') \rw \Pi_{\varphi_z \cdot \varphi_c}(G_z')$, where $\chi_c : C(F) \rw \C^\times$ is the character corresponding to $\varphi_c$. This bijection is compatible with the identity $S_{\varphi_z} = S_{\varphi_z \cdot \varphi_c}$. Since $\chi_c$ restricts trivially to $G'(F)$ we conclude that the packet $\Pi_\varphi(G')$ and the bijection \eqref{eq:zin_llc} is indeed independent of the choice of $\varphi_z$.

We will now argue that they are also independent of the choice of $z$-embedding. If $G \rw G_z$ and $G \rw G_y$ are two $z$-embeddings, we construct as in Fact \ref{fct:zin_filt} a common refinement $G_x$. Then $G \rw G_x$, $G_z \rw G_x$, and $G_y \rw G_x$, are pseudo-$z$-embeddings and the center of $G_x$ is connected. Choose $\varphi_x : L_F \rw {^LG_x}$ lifting $\varphi$ by Corollary \ref{cor:zin_liftpar} and let $\varphi_z : L_F \rw {^LG_z}$ and $\varphi_y : L_F \rw {^LG_y}$ be the corresponding parameters. We have the commutative diagrams of bijections
\[ \xymatrix@=1.5pc{
	&\pi_0(S_{\varphi_x}^+)\ar[dl]\ar[dr]&&&\Pi_{\varphi_x}(G_x')\ar[dl]\ar[dr]&\\
	\pi_0(S_{\varphi_y}^+)\ar[dr]&&\pi_0(S_{\varphi_z}^+\ar[dl])&\Pi_{\varphi_y}(G_y')\ar[dr]&&\Pi_{\varphi_z}(G_z')\ar[dl]\\
	&\pi_0(S_{\varphi}^+)&&&\Pi_{\varphi}(G')
}
\]
This together with the natural compatibility of $\tx{LLC}_\tx{rig}$ along the pseudo-$z$-embeddings $G_z \rw G_x$ and $G_y \rw G_x$ assumed above implies that the set $\Pi_\varphi(G')$ and the bijection \eqref{eq:zin_llc} provided by $G_z$ coincide with those provided by $G_y$.

The sets $\Pi_\varphi(G')$ for various $\varphi$ exhaust $\Pi_\tx{temp}(G')$. Indeed, for any $\pi \in \Pi_\tx{temp}(G')$ we can find an extension of its central character to a unitary character $\chi : Z(G_z')(F) \rw \C^\times$. Then $\pi_z = \pi \otimes \chi$ is an extension of $\pi$ to an element $\pi_z \in \Pi_\tx{temp}(G_z')$, which by $\tx{LLC}_\tx{rig}(\psi_z,x_\tx{rig})$ belongs to some packet $\Pi_{\varphi_z}(G_z')$. By construction $\pi$ then belongs to $\Pi_\varphi(G')$, where $\varphi$ is the composition of $\varphi_z$ with the projection $^LG_z \rw {^LG}$. The same argument also shows that the sets $\Pi_\varphi(G')$ for various parameters $\varphi$ are disjoint.

We will now argue that the character identity \eqref{eq:charidri} holds for the packet $\Pi_\varphi(G)$. It will be more convenient to consider the following equivalent formulation
\begin{equation} \label{eq:zin_charid}
\Theta^{\dot s}_{\varphi,x_\tx{rig}}(\delta') = \sum_{\gamma_1} \Delta'[\mf{w},\mf{\dot e},\mf{z},(\psi,x_\tx{rig})](\gamma_1,\delta')\Delta_{IV}(\gamma_1,\delta')^{-2}\Theta^1_{\xi_1\circ\varphi,1}(\gamma^1).
\end{equation}
Here $\delta' \in G'(F)$ is a strongly regular semi-simple element and $\gamma_1$ runs over the set of stable conjugacy classes of strongly regular semi-simple elements in $H_1(F)$.

According to the construction of $\Pi_\varphi(G)$, the virtual character $\Theta^{\dot s}_{\varphi,x_\tx{rig}}$ is the restriction to $G'(F)$ of the virtual character $\Theta^{\dot s}_{\varphi_z,x_\tx{rig}}$ of the group $G_z'(F)$ for any lift $\varphi_z$ of $\varphi$. In the same way, $\Theta^1_{\xi_1\circ\varphi,1}$ is the restriction to $H_1(F)$ of the virtual character $\Theta^1_{\xi_{z,1}\circ\varphi_z,1}$ of the group $H_{z,1}(F)$. Lemma \ref{lem:zin_tf} implies that the transfer factor $\Delta'[\mf{w},\mf{\dot e},\mf{z},(\psi,x_\tx{rig})]$ remains unchanged if we pass from $G'$ to $G'_z$. In the proof we mentioned the much simpler statement that the factor $\Delta_{IV}$ also doesn't change. Finally, the set of stable classes in $H_{z,1}(F)$ of the element $\gamma_1 \in H_1(F)$ is the same as the set of stable classes in $H_1(F)$. The identity \eqref{eq:zin_charid} thus follows from the corresponding identity for the parameter $\varphi_z$.

\section{Changing the rigidifying datum in $\tx{LLC}_\tx{rig}$} \label{sec:changrig}

In this section we will study the following question: Given an connected reductive group $G$ defined and quasi-split over $F$, an inner twist $\psi : G \rw G'$, and two elements $x_{1,\tx{rig}},x_{2,\tx{rig}} \in Z^1(u \rw W,Z(G) \rw G)$ lifting $\psi^{-1}\sigma(\psi) \in Z^1(\Gamma,G_\tx{ad})$, what is the relationship between the statements $\tx{LLC}_\tx{rig}(\psi,x_{1,\tx{rig}})$ and $\tx{LLC}_\tx{rig}(\psi,x_{2,\tx{rig}})$? The answer to this question will be given by an explicit relation between the two statements. This relation can be used either to switch from one normalization to another in applications, or to deduce the validity of one normalization from the validity of another as a step in the proof of $\tx{LLC}_\tx{rig}$. The latter situation will occur if one wants to deduce $\tx{LLC}_\tx{rig}$ from $\tx{LLC}_\tx{iso}$ using the results of the previous two sections, because not all element $x_\tx{rig}$ will come from $B(G)_\tx{bas}$ or $B(G_z)_\tx{bas}$. This situation would also occur if one wants to deduce $\tx{LLC}_\tx{rig}$ using the stabilized trace formula and the local-global passage established in \cite{KalGRI}.

\subsection{Description of $H^1(W,Z)$} \label{sub:tn+z}

In \cite{KalRI} we studied the cohomology set $H^1(u \rw W,Z \rw G)$, where $G$ is a connected reductive group, in particular a torus, and $Z$ is a finite central subgroup. In order to understand how $\tx{LLC}_\tx{rig}(\psi,x_\tx{rig})$ depends on the choice of $x_\tx{rig}$, we will also need to understand the cohomology group $H^1(u \rw W,Z \rw Z)$, where $Z$ is a finite (multiplicative) algebraic group defined over $F$. This cohomology group is the same as $H^1(W,Z)$ -- the group of continuous cohomology classes of the topological group $W$ with values in the finite group $Z(\ol{F})$.

Let $S$ be a torus over $F$, $Z \subset S$ a finite subgroup, and $\bar S=S/Z$. We write again $Y=X_*(S)$ and $\bar Y=X_*(\bar S)$. The following is part of \cite[Diagram (3.6)]{KalRI},
\begin{equation} \label{eq:bfd}
\xymatrix{
	1\ar[r]&H^1(\Gamma,Z)\ar[r]^-{\tx{Inf}}\ar[d]&H^1(u \rw W,Z \rw Z)\ar[r]^-{\tx{Res}}\ar[d]&\tx{Hom}(u,Z)^\Gamma\ar@{=}[d]\\
1\ar[r]&H^1(\Gamma,S)\ar[r]^-{\tx{Inf}}\ar@{=}[d]&H^1(u\rw W,Z\rw S)\ar[r]^-{\tx{Res}}\ar[d]&\tx{Hom}(u,Z)^\Gamma\ar[d]\\
&H^1(\Gamma,S)\ar[r]&H^1(\Gamma,\bar S)\ar[r]\ar[d]&H^2(\Gamma,Z)\ar[d]\\
&&1&1
}
\end{equation}
describing the relationship between $H^1(u \rw W,Z \rw S)$ and the usual cohomology groups $H^1(\Gamma,S)$ and $H^1(\Gamma,\bar S)$. We have so far the following diagram that is isomorphic to Diagram \eqref{eq:bfd}

\begin{equation} \label{eq:bfdtn}
\xymatrix{
	1\ar[r]&\hat H^{-2}(\Gamma,\bar Y/Y)\ar[r]\ar[d]&?\ar[r]\ar[d]&\hat Z^{-1}(\Gamma,\bar Y/Y)\ar@{=}[d]\\
	1\ar[r]&\hat H^{-1}(\Gamma,Y)\ar[r]\ar@{=}[d]&\frac{\hat Z^{-1}(\Gamma,\bar Y)}{\hat B^{-1}(\Gamma,Y)}\ar[r]\ar[d]&\hat Z^{-1}(\Gamma,\bar Y/Y)\ar[d]\\
	&\hat H^{-1}(\Gamma,Y)\ar[r]&\hat H^{-1}(\Gamma,\bar Y)\ar[r]&\hat H^{-1}(\Gamma,\bar Y/Y)\ar[d]\\
	&&&1
}
\end{equation}
Here we are using hats to denote Tate cohomology groups. Since $\Gamma$ is not a finite group, we must explain what we mean by that. We warn the reader that we do \emph{not} mean the Tate cohomology groups for profinite groups as defined for example in \cite[Ch 1. \S9]{NSW08}. Let $A$ be a discrete $\Gamma$-module that is finitely generated over $\Z$. Then it is inflated from $\Gamma_{E/F}$ for some finite Galois extension $E/F$. For any finite Galois extension $K/F$ containing $E$, the identity map $\hat C^{-1}(\Gamma_{E/F},A)=A=\hat C^{-1}(\Gamma_{K/F},A)$ respects the subgroups $\hat Z^{-1}$ and $\hat B^{-1}$ and hence produces a map $\hat H^{-1}(\Gamma_{E/F},A) \rw \hat H^{-1}(\Gamma_{K/F},A)$. We declare $\hat H^{-1}(\Gamma,A)$ to be the colimit of this system. It is easily seen that this colimit stabilizes. As for degree $(-2)$, assume further that $A$ is finite and define $\hat H^{-2}(\Gamma,A)$ to be equal to $H_1(\Gamma,A)$. This is the limit of the finite groups $\hat H^{-2}(\Gamma_{K/F},A)=H_1(\Gamma_{K/F},A)$ with respect to the coinflation map. It is argued in \cite[VI.1]{Lan83} that this limit stabilizes.

The isomorphism $\hat H^{-1}(\Gamma,Y) \rw H^1(\Gamma,S)$ and its analog for $\bar S$ are the usual Tate-Nakayama isomorphisms. The isomorphism $\frac{\hat Z^{-1}(\Gamma,\bar Y)}{\hat B^{-1}(\Gamma,Y)} \rw H^1(u \rw W,Z \rw S)$ was constructed in \cite[\S4]{KalRI}, where also the more elementary isomorphism $\hat Z^{-1}(\Gamma,\bar Y/Y) \rw \tx{Hom}(u,Z)^\Gamma$ is discussed. The isomorphisms $\hat H^{-2}(\Gamma,\bar Y/Y) \rw H^1(\Gamma,Z)$ and $\hat H^{-1}(\Gamma,\bar Y/Y) \rw H^2(\Gamma,Z)$ are variants of Poitou-Tate duality and are discussed in \cite[VI.1]{Lan83}, where it is also shown that the limit of $\hat H^{-2}$ stabilizes.

The purpose of this section is to demystify the question mark in Diagram \eqref{eq:bfdtn} and the arrows connecting with it. We claim that
\[ ?=\hat C^{-2}(\Gamma,\bar Y/Y)/\hat B^{-2}(\Gamma,\bar Y/Y) = \varprojlim\hat C^{-2}(\Gamma_{E/F},\bar Y/Y)/\hat B^{-2}(\Gamma_{E/F},\bar Y/Y), \]
where again the limit is taken over all finite Galois extensions $K/F$ through which the action of $\Gamma$ on $\bar Y/Y$ factors, and the transition maps are given by coinflation. In Diagram \eqref{eq:bfdtn}, the horizontal map going into this term is given by the natural inclusion of $\hat Z^{-2}$ into $\hat C^{-2}$, the horizontal map going out of this term is given by the differential, and the vertical map going out of this term is given by first lifting an element of $?$ to an element of $C^{-2}(\Gamma_{E/F},\bar Y)$ and then taking the differential. A simple computation shows that all these maps respect the relevant transition maps in the direct and inverse systems involved and that the diagram is commutative. It is also clear that the outer rim of that diagram, i.e. the sequence
\begin{equation} \label{eq:zz}
1\rw\hat H^{-2}(\Gamma,\bar Y/Y)\rw\frac{\hat C^{-2}(\Gamma,\bar Y/Y)}{\hat B^{-2}(\Gamma,\bar Y/Y)}
\rw\hat Z^{-1}(\Gamma,\bar Y/Y)
\rw\hat H^{-1}(\Gamma,\bar Y/Y)\rw1,
\end{equation}
is exact. The latter corresponds to the exactness of the part of Diagram \eqref{eq:bfd} corresponding to the inf-res sequence \cite[(3.5)]{KalRI} for $G=Z$.

Recall $\hat C^{-2}(\Gamma_{E/F},\bar Y/Y)=\tx{Maps}(\Gamma_{E/F},\bar Y/Y)$ and that given such a $(-2)$\-cochain $f$, its differential is $df = \sum_{\sigma \in \Gamma_{E/F}} \sigma^{-1}f(\sigma)-f(\sigma) \in \hat C^{-1}(\Gamma_{E/F},\bar Y/Y)=\bar Y/Y$. Recall further that the coinflation map sends $f' \in \tx{Maps}(\Gamma_{K/F},\bar Y/Y)$ to $f \in \tx{Maps}(\Gamma_{E/F},\bar Y/Y)$ given by $f(\sigma)=\sum_{\sigma' \mapsto \sigma} f'(\sigma')$. From this formula it is obvious that coinflation is surjective. Moreover, since the first, third, and fourth (co)limits of the above four-term exact sequence all stabilize, so must also the second term.

Next we define an isomorphism
\begin{equation} \label{eq:tn+z}
	\hat C^{-2}(\Gamma,\bar Y/Y)/\hat B^{-2}(\Gamma,\bar Y/Y) \rw H^1(W,Z).
\end{equation}
We choose $S$ so that $H^1(\Gamma,\bar S)=1$ and the map $H^1(\Gamma,Z) \rw H^1(\Gamma,S)$ is bijective. This is possible according to Proposition \ref{pro:zembex}. We claim that then the map $H^1(W,Z) \rw H^1(u \rw W,Z \rw S)$ is also bijective. Indeed, its surjectivity is immediate from $H^1(\Gamma,\bar S)=1$. Its kernel is equal to the image of $\bar S(F)=\bar S(\ol{F})^W$ in $H^1(W,Z)$ under the connecting homomorphism. This is the same as the inflation of the kernel of $H^1(\Gamma,Z) \rw H^1(\Gamma,S)$, which is trivial. A similar argument shows that the map $\hat C^{-2}(\Gamma,\bar Y/Y)/\hat B^{-2}(\Gamma,\bar Y/Y) \rw \hat Z^{-1}(\Gamma,\bar Y)/\hat B^{-1}(\Gamma,Y)$ is bijective. We now define \eqref{eq:tn+z} as the composition of the three bijections
\begin{equation} \label{eq:tn+z-def} \frac{\hat C^{-2}(\Gamma,\bar Y/Y)}{\hat B^{-2}(\Gamma,\bar Y/Y)} \rw \frac{\hat Z^{-1}(\Gamma,\bar Y)}{\hat B^{-1}(\Gamma,Y)} \rw H^1(u \rw W,Z \rw S) \rw H^1(W,Z). \end{equation}
We must now argue that this composition is independent of the choice of $S$ and is functorial in $Z$. For independence of $S$, let $Z \rw S_1$ and $Z \rw S_2$ be two choices of $S$. Let $S_3$ be the push-out $S_1 \times_Z S_2$. Then $Z \rw S_3$ given by $z \mapsto (z,1)=(1,z)$ is a third embedding with the same properties. Moreover, we have the embeddings $S_1 \rw S_3$ and $S_2 \rw S_3$ given by $s_1 \mapsto (s_1,1)$ and $s_2 \mapsto (1,s_2)$. The first one leads to the exact sequence
\[ 1 \rw S_1 \rw S_3 \rw \bar S_2 \rw 1\]
from which, by taking $W$-cohomology, we obtain the exact sequence
\[ S_3(F) \rw \bar S_2(F) \rw H^1(u \rw W,Z \rw S_1) \rw H^1(u \rw W,Z \rw S_2 ) \rw H^1(\Gamma,\bar S_2). \]
Now $S_3(F)$ contains $S_2(F)$ which surjects onto $\bar S_2(F)$, while $H^1(\Gamma,\bar S_2)=1$, and we conclude that $H^1(u \rw W,Z \rw S_1) \rw H^1(u \rw W,Z \rw S_3)$ is bijective. In the same way we conclude that $H^1(u \rw W,Z \rw S_2) \rw H^1(u \rw W,Z \rw S_3)$ is bijective. This, together with the fact that all maps in \eqref{eq:tn+z-def} are functorial in $S$ proves that $S_1$ and $S_2$ lead to the same isomorphism \eqref{eq:tn+z}.

The proof of functoriality of \eqref{eq:tn+z} in $Z$ is similar. Given $Z_1 \rw Z_2$, choose embeddings $Z_1 \rw S_1$ and $Z_2 \rw S_2$ and form $S_3=S_1 \times_{Z_1} S_2$. Then we obtain the exact sequence
\[ 1 \rw Z_2 \rw S_3 \rw \bar S_1 \times \bar S_2 \rw 1. \]
The map $S_3(F) \rw \bar S_2(F) \times \bar S_1(F)$ is surjective, because its composition with the obvious map $S_1(F) \times S_2(F) \rw S_3(F)$ gives the surjective map $S_1(F) \times S_2(F) \rw \bar S_1(F) \times \bar S_2(F)$. Moreover, $H^1(\Gamma,\bar S_1 \times \bar S_2)=1$. Thus we may construct the isomorphism \eqref{eq:tn+z} for $Z_2$ by using \eqref{eq:tn+z-def} with the embedding $Z_2 \rw S_3$. But we now have the morphism $[Z_1 \rw S_1] \rw [Z_2 \rw S_3]$ of embeddings and the functoriality of \eqref{eq:tn+z} follows from the functoriality of the three arrows in \eqref{eq:tn+z-def}.

Although we will not need this, we remark that there is an explicit formula for the isomorphism \eqref{eq:tn+z} that does not involve a choice of $S$. In order to give it, we will use the notation established in \cite[\S4.4,\S4.5,\S4.6]{KalRI}. In particular we have the exhaustive tower of finite Galois extensions $E_k/F$, a co-final sequence $n_k \in \N$ (which we may specify to be $n_k=[E_k:F]$), a 1-cocycle $c_k \in Z^2(\Gamma_{E_k/F},E_k^\times)$ representing the fundamental class, and an $n_k$-th root map $l_k : \ol{F}^\times \rw \ol{F}^\times$. This data leads to explicit elements $\xi_k \in Z^2(\Gamma,u_{E_k/F,n_k})$ given by \cite[(4.7)]{KalRI} and thus to explicit extensions $W_k = u_{E_k/F,n_k} \boxtimes_{\xi_k} \Gamma$. There are also surjective transition maps $W_{k+1} \rw W_k$ and the limit of this system is $W$.

In order to give the intrinsic formula of \eqref{eq:tn+z}, we first replace the finite $\Gamma$-module $\bar Y/Y$, which a-priori depends on the choice of $S$, by the isomorphic module $A^\vee = \tx{Hom}(X^*(Z),\Q/\Z)$. Let $[\bar\lambda] \in \hat C^{-2}(\Gamma,A^\vee)$. Choose $k$ large enough so that $\exp(Z)|n_k$ and let  $[\bar\lambda_k] \in \hat C^{-2}(\Gamma_{E_k/F},A^\vee)$ be the image of $[\bar\lambda]$. Then the map
\[ z_{[\bar\lambda_k]} : W_k \rw Z(\ol{F}),\quad x \boxtimes \sigma \mapsto \phi_{d[\bar\lambda_k],k}(x) \cdot (-dl_kc_k \sqcup_{E_k/F}[\bar\lambda_k])(\sigma) \]
is an element of $Z^1(W_k,Z)$. In the first factor on the right we are using the isomorphism $[A^\vee]^{N_{E_k/F}} \rw \tx{Hom}(u_{E_k/F,n_k},Z)^\Gamma$ discussed in the beginning of \cite[\S4.6]{KalRI} to obtain $\phi_{d[\bar\lambda_k],k}$. In the second factor on the right $\sqcup_{E_k/F}$ is the unbalanced cup-product of \cite[\S4.3]{KalRI} and we are using the isomorphism $A^\vee=\tx{Hom}(\mu_{n_k},Z)$.  One can check that the inflation of the class $[z_{[\bar\lambda_k]}]$ to an element $[z_{[\bar\lambda]}] \in H^1(W,Z)$ is independent of the choice of $k$ and that $[\bar\lambda] \mapsto [z_{[\bar\lambda]}]$ is an explicit realization of \eqref{eq:tn+z}. As we will not need this statement, we will not give a justification.

\subsection{From isomorphism to duality} \label{sub:tnd}
In this section we are going to explicitly describe the Pontryagin dual of the commutative diagram \eqref{eq:bfdtn}.

Let again $S$ be a torus over $F$, $Z \subset S$ a finite subgroup, and $\bar S=S/Z$. Let $Y=X_*(S)$ and $\bar Y=X_*(\bar S)$. We consider the dual tori $\hat S=\tx{Hom}(Y,\C^\times)$ and $\hat{\bar S}=\tx{Hom}(\bar Y,\C^\times)$. They form the isogeny
\[ 1 \rw \hat Z \rw \hat{\bar S} \rw \hat S \rw 1 \]
with $\hat Z=\tx{Hom}(\bar Y/Y,\C^\times)$ being the Pontryagin dual of $Z$. We claim that the Pontryagin dual of Diagram \eqref{eq:bfdtn} is the following diagram.

\begin{equation} \label{eq:bfdpd}
\xymatrix{
	1&\ar[l]H^1(\Gamma,\hat Z)&\ar[l]Z^1(\Gamma,\hat Z)\ar[l]&\ar[l]_-{-d}\frac{\hat C^0(\Gamma,\hat Z)}{\hat B^0(\Gamma,\hat Z)}\\
	1&\ar[l]\hat H^0(\Gamma, \hat S)\ar[u]^{-\delta}&\ar[l]\pi_0([\hat{\bar S}]^+)\ar[u]^{-d}&\ar[l]\frac{\hat C^0(\Gamma,\hat Z)}{\hat B^0(\Gamma,\hat Z)}\ar@{=}[u]\\
	&\hat H^0(\Gamma,\hat S)\ar@{=}[u]&\ar[l]\hat H^0(\Gamma,\hat{\bar S})\ar[u]&\ar[l]\hat H^0(\Gamma,\hat Z)\ar[u]
}
\end{equation}
Here we have defined $\hat H^0(\Gamma,-)$ in the same way as we defined $\hat H^{-1}(\Gamma,-)$ in the previous section -- as the colimit with respect to the transition maps induced by the identity $\hat C^0(\Gamma_{E/F},A)=A=\hat C^0(\Gamma_{K/F},A)$ for any tower of finite Galois extensions $K/E/F$ with $A$ being inflated from $\Gamma_{E/F}$.

In the middle term of the diagram $[\hat{\bar S}]^+$ is again the preimage in $\hat{\bar S}$ of $\hat S^\Gamma$. We have written $d$ for the obvious differentials, and $\delta$ for the connecting homomorphism. We are forced to place minus signs in order to obtain the correct duality, as we shall now see.

To describe how each term of this diagram is the Pontryagin dual of the corresponding term in Diagram \eqref{eq:bfdtn}, we begin with the term involving $\hat S$ and $\hat{\bar S}$. We have the natural pairing $Y \otimes \hat S \rw \C^\times$. If we write $N$ for the norm map of the action of $\Gamma_{E/F}$, where $E/F$ is any finite Galois extension splitting $S$, then the kernel of $N$ in $Y$ is the exact annihilator of the image of $N$ in $\hat S$, the latter happening to be $\hat S^{\Gamma,\circ}$.
At the same time, $I_{E/F}Y$ is the exact annihilator of $\hat S^\Gamma$. This explains why the bottom left square in Diagram \eqref{eq:bfdpd} is dual to the bottom left square in Diagram \eqref{eq:bfdtn}.
To describe the terms involving $\hat Z$, we use the following.

\begin{lem}
Let $\Delta$ be a finite group and let $A$ and $B$ be finite $\Delta$-modules in duality. For any $i \geq 0$, cup product induces perfect duality of finite groups
\[ \hat C^{-i-1}(\Delta,A) \otimes \hat C^i(\Delta,B) \rw \hat C^{-1}(\Delta,\Q/\Z)=\Q/\Z \]
under which $\hat Z^{-i-1}(\Delta,A)^\perp=\hat B^i(\Delta,B)$ and $\hat B^{-i-1}(\Delta,A)^\perp=\hat Z^i(\Delta,B)$. If $\Delta' \rw \Delta$ is a surjection of finite groups, then
\[ a \cup \tx{inf}(b) = \tx{inf}(\tx{coinf}(a) \cup b),\qquad a \in \hat C^{-i-1}(\Delta',A), b \in \hat C^i(\Delta,B). \]
\end{lem}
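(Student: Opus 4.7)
The plan is to realize the pairing explicitly at the level of inhomogeneous cochains, check that it is perfect, then verify the adjunction with the differentials and the inflation/coinflation identity. Using the complete standard resolution of $\Z$ over $\Z[\Delta]$, identify both $\hat C^{-i-1}(\Delta,A)$ and $\hat C^i(\Delta,B)$ with $\tx{Maps}(\Delta^i,-)$, the positive side carrying the usual inhomogeneous coboundary and the negative side its dual (in degree $-2$ this is the formula $df(\ )=\sum_{\sigma}(\sigma^{-1}-1)f(\sigma)$ recalled in the excerpt). Define
\[
\langle f,g\rangle \;=\; \sum_{(\sigma_1,\dots,\sigma_i)\in\Delta^i}\langle f(\sigma_1,\dots,\sigma_i),\, g(\sigma_1,\dots,\sigma_i)\rangle \;\in\;\Q/\Z,
\]
where the inner bracket is the given duality $A\otimes B\to\Q/\Z$. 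This is a perfect pairing of finite groups: source and target both have cardinality $|A|^{|\Delta|^i}=|B|^{|\Delta|^i}$, and nondegeneracy is immediate coordinate by coordinate from perfectness of the pointwise duality.

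Next I would verify the cochain-level adjunction $\langle d\alpha,\beta\rangle=\langle\alpha,d\beta\rangle$ for $\alpha\in\hat C^{-i-2}(\Delta,A)$ and $\beta\in\hat C^i(\Delta,B)$. For $i=0$ this is the immediate manipulation $\sum_\sigma\langle(\sigma^{-1}-1)f(\sigma),g_0\rangle=\sum_\sigma\langle f(\sigma),(\sigma-1)g_0\rangle=\sum_\sigma\langle f(\sigma),dg_0(\sigma)\rangle$. For general $i$, one either expands the inhomogeneous coboundary and reshuffles the sum (defining, if necessary, the dual differential precisely so that the adjunction holds), or invokes the fact that the pairing is the chain-level cup product coming from a diagonal approximation of the complete resolution, in which case the Leibniz rule is automatic. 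This adjunction gives the easy direction of both orthogonality statements, namely $\hat B^i(\Delta,B)\subseteq\hat Z^{-i-1}(\Delta,A)^\perp$ and $\hat Z^i(\Delta,B)\subseteq\hat B^{-i-1}(\Delta,A)^\perp$.

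For the reverse inclusions I would argue by dimension count. Since the chain-level pairing is perfect, $|\hat Z^{-i-1}(\Delta,A)^\perp|=|\hat C^i(\Delta,B)|/|\hat Z^{-i-1}(\Delta,A)|$, so the first equality reduces to $|\hat B^i(\Delta,B)|\cdot|\hat Z^{-i-1}(\Delta,A)|=|\hat C^i(\Delta,B)|$. Combining $|\hat C^i|=|\hat C^{-i-1}|$ with the exact sequences giving $|\hat Z^{-i-1}|\cdot|\hat B^{-i}|=|\hat C^{-i-1}|$ and $|\hat Z^i|\cdot|\hat B^{i+1}|=|\hat C^i|$, this collapses to the classical Tate duality identity $|\hat H^{-i-1}(\Delta,A)|=|\hat H^i(\Delta,B)|$ for finite $\Delta$-modules in duality. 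A more self-contained alternative is to induct on $i$ by dimension shifting, embedding $A$ in a cohomologically trivial $\Delta$-module (and dually for $B$) to reduce to the base case $i=0$, which was just verified directly.

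The inflation/coinflation identity follows on unwinding the definitions: $\tx{inf}(b)(\sigma_1,\dots,\sigma_i)=b(\bar\sigma_1,\dots,\bar\sigma_i)$, where $\bar\sigma$ denotes the image in $\Delta$, while $\tx{coinf}(a)(\bar\tau_1,\dots,\bar\tau_i)=\sum_{\vec\sigma\mapsto\vec{\bar\tau}}a(\vec\sigma)$. Substituting into the defining formula and regrouping $\sum_{\vec\sigma\in(\Delta')^i}=\sum_{\vec{\bar\tau}\in\Delta^i}\sum_{\vec\sigma\mapsto\vec{\bar\tau}}$ then gives $a\cup\tx{inf}(b)=\tx{coinf}(a)\cup b$ immediately. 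The main technical obstacle will be the verification in Step 2 for arbitrary $i$, namely writing down the correct dual differential on the negative side and tracking signs through the Leibniz rule; the complete-resolution viewpoint bypasses this by producing the dual differential as the formal adjoint of the positive-degree coboundary by construction.
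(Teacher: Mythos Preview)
Your approach is correct and follows the same outline as the paper's (very terse) proof: establish perfectness of the cochain-level pairing by direct inspection, use the Leibniz rule $da\cup b + (-1)^{i+1}a\cup db = d(a\cup b)$ together with $\hat B^{-1}(\Delta,\Q/\Z)=0$ for the annihilator statements, and unwind the definitions for the inflation/coinflation identity.

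There is one simplification you have overlooked, and it is exactly what the paper's two-line argument is implicitly using. Once you have the adjunction $\langle d\alpha,\beta\rangle=\pm\langle\alpha,d\beta\rangle$ between consecutive degrees and you know the cochain-level pairings are perfect, both annihilator \emph{equalities} follow immediately from elementary duality of finite abelian groups: if $f:V\to V'$ and $g:W'\to W$ are transposes under perfect pairings $V\otimes W\to\Q/\Z$ and $V'\otimes W'\to\Q/\Z$, then $(\ker f)^\perp=\tx{im}(g)$ exactly, not merely up to cardinality. Applying this with $f=d:\hat C^{-i-1}(A)\to\hat C^{-i}(A)$ and $g=d:\hat C^{i-1}(B)\to\hat C^i(B)$ gives $\hat Z^{-i-1}(A)^\perp=\hat B^i(B)$; the other equality comes from the adjacent pair of differentials. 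Your dimension-counting route via the identity $|\hat H^{-i-1}(\Delta,A)|=|\hat H^i(\Delta,B)|$ does work, but it is roundabout and risks circularity, since that identity is itself commonly proved from the lemma at hand. The dimension-shifting alternative you mention is non-circular but still heavier than necessary.
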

\begin{proof}

	The perfect duality and the compatibility with inflation and coinflation can be seen by a direct computation using the formula for the cup product. The statement about annihilators comes from the formula $da\cup b + (-1)^{i+1}a \cup db = d(a \cup b)$ and the fact that $\hat B^{-1}(\Delta,\Q/\Z)=0$.
\end{proof}

This lemma shows that each term in Diagram \eqref{eq:bfdpd} involving $\hat Z$ is dual to the corresponding term in Diagram \eqref{eq:bfdtn}. To show commutativity, we reinterpret the natural pairing $Y \otimes \hat S \rw \C^\times$ as the pairing $\hat C^{-1}(\Gamma_{E/F},Y) \otimes \hat C^0(\Gamma_{E/F},\hat S) \rw \hat C^{-1}(\Gamma_{E/F},\C^\times)$ given by the cup-product, where $E/F$ is any finite Galois extension splitting $S$. The commutativity of the diagram now follows from the formula $da\cup b + (-1)^{i+1}a \cup db = d(a \cup b)$ and the fact that $\hat B^{-1}(\Delta,\Q/\Z)=0$.

\subsection{Switching between normalizations}
We will now discuss the effect of changing the rigidifying element $x_\tx{rig}$ of the rigid inner twist $(\psi,x_\tx{rig})$ on the statement $\tx{LLC}_\tx{rig}(\psi,x_\tx{rig})$. Let $\psi : G \rw G'$ be an inner twist and $x_{1,\tx{rig}},x_{2,\tx{rig}} \in Z^1(u \rw W,Z(G) \rw G)$ be two elements lifting $\psi^{-1}\sigma(\psi) \in Z^1(\Gamma,G_\tx{ad})$. Given a tempered Langlands parameter $\varphi : L_F \rw {^LG}$ we have the statements $\tx{LLC}_\tx{rig}(\psi,x_{1,\tx{rig}})$ and $\tx{LLC}_\tx{rig}(\psi,x_{2,\tx{rig}})$, each of which leads to one of the two bijections
\[ \tx{Irr}(\pi_0(S_\varphi^+),[x_{1,\tx{rig}}]) \rw \Pi_\varphi(G') \lw \tx{Irr}(\pi_0(S_\varphi^+),[x_{2,\tx{rig}}]). \]
We will now describe an explicit bijection
\begin{equation} \label{eq:llcrigy} \tx{Irr}(\pi_0(S_\varphi^+),[x_{1,\tx{rig}}]) \rw  \tx{Irr}(\pi_0(S_\varphi^+),[x_{2,\tx{rig}}]) \end{equation}
and then argue that this bijection is compatible with the above two bijections in the obvious way.

For this let $n$ be large enough so that $x_{1,\tx{rig}},x_{2,\tx{rig}} \in Z^1(u \rw W,Z_n \rw G)$. Then there exists $y \in Z^1(W,Z_n)$ with $x_{2,\tx{rig}}=y\cdot x_{1,\tx{rig}}$. We have the exact sequence
\[ 1 \rw \hat Z_n \rw \hat G_n \rw \hat G \rw 1, \]
where we have defined $\hat Z_n$ to be the kernel of the projection $\hat G_n \rw \hat G$, which is at the same time the Pontryagin dual of $Z_n$. On this sequence we have an action of $L_F$ via $\tx{Ad}\circ\varphi$. Since each element of $\tx{Irr}(\pi_0(S_\varphi^+),[x_{i,\tx{rig}}])$ kills the kernel of $\hat{\bar G} \rw \hat G_n$, we may replace $S_\varphi^+$ with its image in $\hat G_n$, which we do without change in notation. The differential $d : C^0(L_F,\hat G_n) \rw C^1(L_F,\hat G_n)$, when restricted to the subgroup $S_\varphi^+$, factors through $\pi_0(S_\varphi^+)$ and takes image in $Z^1(L_F,\hat Z_n)$. The action of $L_F$ on $\hat Z$ by $\tx{Ad}\circ\varphi$ is inflated from $W_F$ and is the same as the action of $W_F$ on $\hat Z_n$ coming from the $\Gamma$-structure on $\hat{\bar G}$. Moreover, since $\hat Z_n$ is finite, we have $Z^1(L_F,\hat Z_n)=Z^1(W_F,\hat Z_n)=Z^1(\Gamma,\hat Z_n)$. The element $[y] \in H^1(W,Z_n)$ provides a character on $Z^1(\Gamma,\hat Z_n)$ as discussed in Subsection \ref{sub:tnd}. Via the negative differential $-d$, we pull this character to a linear character $\pi_0(S_\varphi^+) \rw \C^\times$. The bijection \eqref{eq:llcrigy} is given by tensor product with this linear character.

\begin{lem} \label{lem:lpackca} Assume the validity of $\tx{LLC}_\tx{rig}(\psi,x_{2,\tx{rig}})$. Then the composition of the bijection $\tx{Irr}(\pi_0(S_\varphi^+),[x_{2,\tx{rig}}]) \rw \Pi_\varphi(G')$ with the bijection \eqref{eq:llcrigy} is the unique bijection $\tx{Irr}(\pi_0(S_\varphi^+),[x_{1,\tx{rig}}]) \rw \Pi_\varphi(G')$ that makes $\tx{LLC}_\tx{rig}(\psi,x_{1,\tx{rig}})$ true.
\end{lem}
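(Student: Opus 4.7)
The plan is to prove the lemma in three steps: verify that \eqref{eq:llcrigy} lands in the correct $\tx{Irr}$-set; compute the ratio of transfer factors \eqref{eq:ritf} for $x_{1,\tx{rig}}$ and $x_{2,\tx{rig}}$; and then combine with $\tx{LLC}_\tx{rig}(\psi,x_{2,\tx{rig}})$ to obtain the character identity \eqref{eq:charidri} for $x_{1,\tx{rig}}$.

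The technical backbone is a single duality identity extracted from Subsection \ref{sub:tnd}. Comparing the middle columns of Diagrams \eqref{eq:bfdtn} and \eqref{eq:bfdpd} one sees that, for any $F$-torus $T$ containing $Z_n$ centrally, the inclusion $H^1(W,Z_n)\hookrightarrow H^1(u\rw W,Z_n\rw T)$ has Pontryagin dual equal to the map $-d:\pi_0([\hat{\bar T}]^+)\rw Z^1(\Gamma,\hat Z_n)$. Equivalently, for $[y]\in H^1(W,Z_n)$ and $\dot t\in\pi_0([\hat{\bar T}]^+)$,
\[ \<[y]_T,\dot t\>_{\eqref{eq:riduals}} \;=\; \<y,-d\dot t\>, \]
where $[y]_T$ denotes the image of $[y]$ in $H^1(u\rw W,Z_n\rw T)$ and the right-hand pairing is the duality of \eqref{eq:tn+z}. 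Taking $T=G$ and $\dot t=\dot z\in\pi_0(Z(\hat{\bar G})^+)$, this identifies $\chi_y|_{\pi_0(Z(\hat{\bar G})^+)}$ with $\<[x_{2,\tx{rig}}][x_{1,\tx{rig}}]^{-1},-\>$ under \eqref{eq:ridualg}, so tensoring with $\chi_y$ does send $\tx{Irr}(\pi_0(S_\varphi^+),[x_{1,\tx{rig}}])$ into $\tx{Irr}(\pi_0(S_\varphi^+),[x_{2,\tx{rig}}])$. Taking instead $T=S=\tx{Cent}(\delta,G)$ for stably conjugate $\delta,\delta'$ and $\dot t=\dot s_{\gamma,\delta}$, together with the identity $\tx{inv}[x_{2,\tx{rig}}](\delta,\delta')=[y]_S\cdot\tx{inv}[x_{1,\tx{rig}}](\delta,\delta')$ forced by the centrality of $y$, shows that the ratio of transfer factors \eqref{eq:ritf} equals $\<y,-d\dot s_{\gamma,\delta}\>$. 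Because the admissible isomorphism $\phi_{\gamma,\delta}$ restricts to the identity on the common central subgroup $Z_n$, its dual identifies the two copies of $\hat Z_n$ canonically and $d\dot s_{\gamma,\delta}=d\dot s$, so the ratio equals $\chi_y(\dot s)$, a constant independent of $(\gamma_1,\delta')$.

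With this constant ratio in hand, the assembly is routine: if $f^H$ matches $f'$ via $\Delta'[\mf{w},\mf{\dot e},\mf{z},(\psi,x_{1,\tx{rig}})]$ then $\chi_y(\dot s)f^H$ matches $f'$ via $\Delta'[\mf{w},\mf{\dot e},\mf{z},(\psi,x_{2,\tx{rig}})]$, so the character identity for $x_{2,\tx{rig}}$ gives
\[ \chi_y(\dot s)\,\Theta^1_{\xi_1\circ\varphi,1}(f^H) = \Theta^{\dot s}_{\varphi,[x_{2,\tx{rig}}]}(f'). \]
By construction of \eqref{eq:llcrigy}, the coefficient $\<\pi,\dot s\>$ of each $\pi\in\Pi_\varphi(G')$ under the candidate $x_{1,\tx{rig}}$-bijection is $\chi_y(\dot s)^{-1}$ times that under the $x_{2,\tx{rig}}$-bijection, whence $\Theta^{\dot s}_{\varphi,[x_{2,\tx{rig}}]}(f')=\chi_y(\dot s)\,\Theta^{\dot s}_{\varphi,[x_{1,\tx{rig}}]}(f')$. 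Dividing by $\chi_y(\dot s)$ yields \eqref{eq:charidri} for $x_{1,\tx{rig}}$; uniqueness of the parameterization is the standard fact that \eqref{eq:charidri} for all $\dot s$ determines the coefficients $\<\pi,\dot s\>$ by Fourier inversion on $\pi_0(S_\varphi^+)/\pi_0(Z(\hat{\bar G})^+)$ together with the linear independence of Harish-Chandra characters. The main obstacle is the careful bookkeeping of signs and canonical identifications through Subsection \ref{sub:tnd}, in particular the compatibility of $-d$ with the admissible isomorphism; once these are pinned down, the rest is a direct comparison.
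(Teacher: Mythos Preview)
Your proof is correct and follows essentially the same route as the paper. The paper separates the transfer-factor computation into a standalone Lemma \ref{lem:tfca} (proved via the same duality from Diagram \eqref{eq:bfdpd} that you invoke), then plugs it into \eqref{eq:charidri} and finishes with linear independence of tempered characters; you carry out the same steps inline, adding an explicit check that tensoring by $\chi_y$ sends $\tx{Irr}(\pi_0(S_\varphi^+),[x_{1,\tx{rig}}])$ to $\tx{Irr}(\pi_0(S_\varphi^+),[x_{2,\tx{rig}}])$, which the paper leaves implicit.
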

\begin{proof}
Consider the left hand side of \eqref{eq:charidri} for the two rigid inner twists $(\psi,x_{1,\tx{rig}})$ and $(\psi,x_{2,\tx{rig}})$. Let us denote the two functions occurring there by $f^{\mf{\dot e},1}$ and $f^{\mf{\dot e},2}$. The condition of matching orbital integrals together with Lemma \ref{lem:tfca} imply $f^{\mf{\dot e},2}=\<[y],(-d)\dot s\>f^{\mf{\dot e},1}$. Looking at the right hand side of \eqref{eq:charidri} and its definition \eqref{eq:scharri} we conclude that $\tx{LLC}_\tx{rig}(\psi,x_{1,\tx{rig}})$ is equivalent to the equation
\[ \sum_{\pi \in \Pi_\varphi(G')} \<\dot\pi,\dot s\>_2\Theta_{\dot\pi} = \<[y],(-d)\dot s\> \sum_{\pi \in \Pi_\varphi(G')} \<\dot\pi,\dot s\>_1 \Theta_{\dot\pi}, \]
where we have inserted the subscripts 1 and 2 to distinguish between the two pairings coming from the two statements $\tx{LLC}_\tx{rig}(\psi,x_{1,\tx{rig}})$ and $\tx{LLC}_\tx{rig}(\psi,x_{2,\tx{rig}})$. The linear independence of the characters of tempered representations of $G'(F)$ imply $\<\dot\pi,\dot s\>_2=\<[y],(-d)\dot s\>\<\pi,\dot s\>_1$. Since this is true for all $\dot s \in \pi_0(S_\varphi^+)$ we are done.
\end{proof}

In order to complete the proof of Lemma \ref{lem:lpackca} we must state and prove Lemma \ref{lem:tfca}, which tells us how the transfer factor \eqref{eq:ritf} changes when we switch from $x_{1,\tx{rig}}$ to $x_{2,\tx{rig}}$. For this we take a second look at the complex number $\<[y],(-d)\dot s\>$. It can be reinterpreted as follows. Let $\mf{\dot e}=(H,\mc{H},\dot s,\xi)$ be the refined endoscopic datum associated to $(\varphi,\dot s)$, as explained in Subsection \ref{sub:reviewllc}. We map $Z_n$ under $Z(G) \rw Z(H)$ and form $\bar H=H/Z_n$. If we restrict the differential $d : C^0(\Gamma,Z(\hat{\bar H})) \rw C^1(\Gamma,Z(\hat{\bar H}))$ to the subgroup $Z(\hat{\bar H})^+ \subset C^0(\Gamma,Z(\hat{\bar H}))$, then it kills the connected component $Z(\hat{\bar H})^{+,\circ}$ (because it is a subgroup of $Z(\hat{\bar H})^\Gamma$) and its image belongs to $Z^1(\Gamma,\hat Z_n)$. We can thus map the element $\dot s \in \pi_0(Z(\hat{\bar H})^+)$ of the refined endoscopic datum $\mf{\dot e}$ under \emph{the negative} of this differential and obtain an element $(-d)\dot s \in Z^1(\Gamma,\hat Z_n)$. We can then pair this element with the class of $y$ in $H^1(W,Z_n)$ using the duality discussed in Section \ref{sub:tnd} and obtain the complex number $\<[y],(-d)\dot s\>$. Of course, this coincides with the previous definition of $\<[y],(-d)\dot s\>$, but this interpretation makes the following lemma independent of the previous discussion.

\begin{lem} \label{lem:tfca} We have
\[ \Delta'[\mf{w},\mf{\dot e},\mf{z},(\psi,x_{2,\tx{rig}})]  = \<[y],(-d)\dot s\>\Delta'[\mf{w},\mf{\dot e},\mf{z},(\psi,x_{1,\tx{rig}})].\]
\end{lem}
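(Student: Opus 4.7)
The first observation is that in formula \eqref{eq:ritf}, the quantity $\Delta'_\mf{w}(\gamma_1,\delta)$ does not depend on $x_\tx{rig}$, so the equation to be proved reduces, for every strongly regular semi-simple related pair $(\gamma_1,\delta')\in H_1(F)\times G'(F)$, to the identity
\[
\<\tx{inv}[x_{2,\tx{rig}}](\delta,\delta'),\dot s_{\gamma,\delta}\>
\;=\;
\<[y],(-d)\dot s\>\cdot\<\tx{inv}[x_{1,\tx{rig}}](\delta,\delta'),\dot s_{\gamma,\delta}\>.
\]
Since $y$ takes values in the central subgroup $Z_n\subset Z(G)\subset S$, where $S=\tx{Cent}(\delta,G)$, the defining 1-cocycle $w\mapsto g^{-1}x_{2,\tx{rig}}(w)\sigma_w(g)$ of $\tx{inv}[x_{2,\tx{rig}}](\delta,\delta')$ equals $y$ multiplied pointwise with the corresponding cocycle for $x_{1,\tx{rig}}$. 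This yields
\[
\tx{inv}[x_{2,\tx{rig}}](\delta,\delta') \;=\; [y]_S\cdot\tx{inv}[x_{1,\tx{rig}}](\delta,\delta'),
\]
where $[y]_S\in H^1(u\rw W,Z_n\rw S)$ denotes the image of $[y]\in H^1(W,Z_n)$ under the natural map. By bilinearity of the pairing \eqref{eq:riduals}, it therefore suffices to prove
\[
\<[y]_S,\dot s_{\gamma,\delta}\> \;=\; \<[y],(-d)\dot s\>.
\]

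For this, the plan is to invoke the Pontryagin-dual diagrams \eqref{eq:bfdtn} and \eqref{eq:bfdpd} of Subsection \ref{sub:tnd}, applied to the torus $S$ and the finite central subgroup $Z_n$. These diagrams express precisely the following compatibility: the isomorphism \eqref{eq:tn+z} identifies the duality $H^1(W,Z_n)\otimes Z^1(\Gamma,\hat Z_n)\rw\C^\times$ with the restriction, via the natural map $H^1(W,Z_n)\rw H^1(u\rw W,Z_n\rw S)$ and the negative-differential map $-d:\pi_0([\hat{\bar S}]^+)\rw Z^1(\Gamma,\hat Z_n)$, of the duality \eqref{eq:riduals}. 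Applying this compatibility to the pair $([y],\dot s_{\gamma,\delta})$ gives
\[
\<[y]_S,\dot s_{\gamma,\delta}\> \;=\; \<[y],(-d)\dot s_{\gamma,\delta}\>.
\]

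It then remains to verify that $(-d)\dot s_{\gamma,\delta}=(-d)\dot s$ as elements of $Z^1(\Gamma,\hat Z_n)$. By construction, $\dot s_{\gamma,\delta}$ is the image of $\dot s\in Z(\hat{\bar H})^+$ under the map $Z(\hat{\bar H})\rw\hat{\bar S}$ dual to the admissible isomorphism $\phi_{\gamma,\delta}:S^H\rw S$; since this isomorphism restricts on $Z(H)$ to the canonical identification $Z(H)\rw Z(G)\hookrightarrow S$, the dual map restricts to the identity on the common kernel $\hat Z_n$ of $Z(\hat{\bar H})\rw Z(\hat H)$ and $\hat{\bar S}\rw\hat S$. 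Naturality of $-d$ then yields $(-d)\dot s_{\gamma,\delta}=(-d)\dot s$, completing the proof. The main substantive step is the Pontryagin-dual compatibility of Subsection \ref{sub:tnd}; everything else is formal, and the sign in $(-d)$ is forced by the sign convention in \eqref{eq:bfdpd}.
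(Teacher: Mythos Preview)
Your proof is correct and follows essentially the same route as the paper's: reduce to the invariant-pairing term via \eqref{eq:ritf}, use that the two invariants differ by the image of $[y]$, invoke the duality compatibility of Diagram \eqref{eq:bfdpd} to rewrite $\<[y]_S,\dot s_{\gamma,\delta}\>$ as $\<[y],(-d)\dot s_{\gamma,\delta}\>$, and then identify $(-d)\dot s_{\gamma,\delta}$ with $(-d)\dot s$. One small phrasing slip: there is no canonical map $Z(H)\rw Z(G)$; what you need (and what you actually use) is that $\phi_{\gamma,\delta}$ restricts to the identity on $Z(G)\subset Z(H)$, hence on $Z_n$, so that the induced map on the kernel $\hat Z_n$ is the identity.
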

\begin{proof}
Let $\gamma_1 \in H_{1,G\tx{-sr}}(F)$ and $\delta' \in G_\tx{sr}'(F)$ be a pair of related elements and fix $\delta \in G_\tx{sr}(F)$ stably conjugate to $\delta'$. Then according to \eqref{eq:ritf} we have
\[
\frac{\Delta'[\mf{w},\mf{\dot e},\mf{z},(\psi,x_{2,\tx{rig}})](\gamma_1,\delta')}{\Delta'[\mf{w},\mf{\dot e},\mf{z},(\psi,x_{1,\tx{rig}})](\gamma_1,\delta')} = \frac{\<\tx{inv}[x_{2,\tx{rig}}](\delta,\delta'),\dot s_{\gamma,\delta}\>}{\<\tx{inv}[x_{1,\tx{rig}}](\delta,\delta'),\dot s_{\gamma,\delta}\>}.
\]
Both $\tx{inv}[x_{2,\tx{rig}}](\delta,\delta')$ and $\tx{inv}[x_{1,\tx{rig}}](\delta,\delta')$ are elements of $H^1(u \rw W,Z_n \rw S)$, where $S \subset G$ is the centralizer of $\delta$. The difference $\tx{inv}[x_{2,\tx{rig}}](\delta,\delta') - \tx{inv}[x_{1,\tx{rig}}](\delta,\delta')$ is equal to the image of $[y] \in H^1(W,Z_n)$. It follows that the right hand side above is equal to $\<[y],\dot s_{\gamma,\delta}\>$. According to Diagram \eqref{eq:bfdpd}, we would get the same result if we mapped $\dot s_{\gamma,\delta} \in \pi_0([\hat{\bar S}]^+)$ to the group $Z^1(\Gamma,\hat Z_n)$ via $-d$ and then paired the result with $[y] \in H^1(W,Z_n)$. The image of $\dot s_{\gamma,\delta}$ in $Z^1(\Gamma,\hat Z_n)$ is the same as the image of $\dot s$ under the differential $-d : \pi_0(Z(\hat{\bar H})^+) \rw Z^1(\Gamma,\hat Z_n)$ and we see that the right hand side is equal to $\<[y],(-d)\dot s\>$, as claimed.
\end{proof}

\bibliographystyle{amsalpha}
\bibliography{/Users/tashokaletha/Dropbox/Work/TexMain/bibliography.bib}

\end{document}